\documentclass[12pt]{article}
\usepackage[utf8]{inputenc}

\usepackage{fetamont}
\usepackage[T1]{fontenc}

\usepackage{amsmath}
\usepackage{amssymb}
\usepackage{amsthm}
\usepackage{mathrsfs}
\usepackage{bm}
\usepackage{bbm}

\usepackage{authblk}

\usepackage{comment}

\usepackage[svgnames,x11names]{xcolor}
\usepackage{hyperref}

\hypersetup{
	colorlinks, 
	linkcolor={SteelBlue},
	citecolor={SteelBlue}, 
	urlcolor={SteelBlue}
}

\usepackage{graphicx}
\usepackage{subfig}
\usepackage{tikz}
\usetikzlibrary{decorations.pathreplacing, tikzmark, calligraphy} 
\usepackage{tikz-cd}

\usetikzlibrary{tikzmark, arrows.meta, calc, bending}

\newtheoremstyle{slanted}
{}
{}
{\slshape}
{}
{\bfseries}
{.}
{ }
{}

\theoremstyle{slanted}
\newtheorem{theorem}{Theorem}[section]
\newtheorem{lemma}[theorem]{Lemma}
\newtheorem{corollary}[theorem]{Corollary}
\newtheorem{proposition}[theorem]{Proposition}
\newtheorem{definition}[theorem]{Definition}
\newtheorem{remark}[theorem]{Remark}

\newtheorem{problem}{Problem}

\DeclareMathAlphabet{\mftm}{T1}{ffm}{m}{n} 
\newcommand{\A}{\mftm{A}} 
\newcommand{\B}{\mftm{B}}
\newcommand{\C}{\mftm{C}}
\newcommand{\D}{\mftm{D}}

\renewcommand{\S}{\mftm{S}}

\newcommand{\ZZ}{\mathbb{Z}}

\newcommand{\RR}{\mathbb{R}}

\newcommand{\NN}{\mathbb{N}}
\newcommand{\M}{\mathscr{M}}
\newcommand{\ind}[1]{\mathbbmss{1}_{#1}}
\newcommand{\tend}[3][]{\xrightarrow[#2\to#3]{#1}}
\newcommand{\occu}[3]{\left|#1_{#2}\right|_{#3}}

\DeclareMathOperator{\type}{type}
\DeclareMathOperator{\Types}{Types}
\DeclareMathOperator{\rec}{rec}

\def\ub#1{
	\leavevmode\hbox{%
		\setbox0\hbox{$#1$}\dp0 0pt
		\vrule height.5ex width.4pt depth.33333ex \kern-.4pt
		\vtop{\hbox{\kern.15em \box0\kern.15em}\kern.33333ex \hrule}%
		\kern-.4pt \vrule height.5ex width.4pt depth.33333ex \kern-.4pt
	}%
}

\def\ubb#1{
	\leavevmode\hbox{%
	\setbox0\hbox{$#1$}\dp0 0pt
	{\color{white}\vrule height.5ex width.4pt depth.33333ex}\kern-.4pt
	\vtop{\hbox{\kern.15em \box0\kern.15em}\kern.33333ex 
		{\color{white}\hrule}}%
	\kern-.4pt {\color{white}\vrule height.5ex width.4pt depth.33333ex}\kern-.4pt
}%
}

\definecolor{Green}{rgb}{0,0.6,0}
\definecolor{Orange}{rgb}{1.0, 0.5, 0.0}

\newcommand\bloc[2][]{\underset{#1}{\ub{#2}}\ }
\newcommand\dessous[2][]{\underset{#1}{\ubb{#2}}\ }

\title{Frequency of patterns in smooth sequences \\ 
over the alphabet $\{1,3\}$}

\author[1]{Damien Jamet}
\author[2]{Irène Marcovici}
\author[3]{L\'eo Poirier}
\author[4]{\\Thierry de la Rue}
\affil[1]{{\small Université de Lorraine, CNRS, LORIA, UMR 7503, F-54000 Nancy, France \protect\\ \texttt{damien.jamet@loria.fr}}}
\affil[2,4]{{\small Univ Rouen Normandie, CNRS, Normandie Univ, LMRS UMR 6085, \protect\\ F-76000 Rouen, France \protect\\ \texttt{irene.marcovici@univ-rouen.fr, thierry.de-la-rue@univ-rouen.fr}}}
\affil[3]{{\small Aix-Marseille Université, CNRS, I2M UMR 7373, F-13000 Marseille, France \protect\\ \texttt{leo.poirier@univ-amu.fr}}}

\date{}

\begin{document}
\bibliographystyle{plainurl}
\sloppy

\maketitle

\begin{abstract}
We provide an ergodic theory framework to study statistical properties of smooth sequences over the odd alphabet $\{1, 3\}$. The arithmetic nature of this alphabet yields a partition of the subshift of smooth sequences based on their local structure, defining a notion of type for those sequences. We describe the substitutive structure of the smaller subshifts obtained by fixing the sequence of types of the successive derivatives of smooth sequences, from which we obtain the unique ergodicity of all these subshifts. A direct consequence is that the asymptotic frequency of any finite pattern in a smooth sequence over $\{1,3\}$ is always well-defined and depends on its type sequence. Finally, we characterize the minimality of these subshifts, and propose some perspectives.
\end{abstract}

\section{Introduction}

The study of combinatorial structures arising from run-length encoding has been a subject of enduring interest in discrete mathematics and theoretical computer science. For many years, the canonical reference for this central object of study has been the 1965 problem posed by Kolakoski \cite{Kol65}, in which he asked for a simple rule describing the self-reading sequence $1 22 11 2 1 22 1 22 11 \dots$ (see Figure \ref{fig::Oldenburger-Kolakoski-Sequence}) and questioned its periodicity. 
\begin{figure}[!h]
	\[ 	\dessous[\Delta(\mathbb{K})]{\mathbb{K}}	\dessous[=]{=}		 \bloc[1]{1} \bloc[2]{22} \bloc[2]{11} \bloc[1]{2} \bloc[1]{1} \bloc[2]{22} \bloc[1]{1} \bloc[2]{22} \bloc[2]{11} \bloc[1]{2} \bloc[2]{11} \bloc[2]{22} \bloc[1]{1} \bloc[1]{2} \bloc[2]{11} \bloc[1]{2} \bloc[1]{1} \bloc[2]{22} \dots 
	\]
	\caption{The Oldenburger-Kolakoski sequence $\mathbb{K}$ and its run-length encoding $\Delta(\mathbb{K}) = \mathbb{K}$.\label{fig::Oldenburger-Kolakoski-Sequence}}
\end{figure}

However, this specific sequence has been introduced and also extensively investigated much earlier by Oldenburger in 1939 \cite{Old39, Brlek2012Kolakoski}. The \textbf{Oldenburger-Kolakoski sequence}, hereafter referred to as such and denoted by $\mathbb{K}$, is the unique sequence over the alphabet $\{1, 2\}$ starting with $1$ 
that is a fixed point of the run-length encoding operator $\Delta$, simply called the \emph{derivative} operator.

While the definition of $\mathbb{K}$ is elementary, its combinatorial and dynamical properties remain largely unknown. In his seminal work on self-generating sequences, Dekking \cite{Dek80} investigated two fundamental structural properties—originally posed as a problem by Kimberling \cite{Kim79}—that remain open to this day:
\begin{itemize}
	\item \textbf{Mirror invariance:} A word is a factor of $\mathbb{K}$ if and only if its mirror image (obtained by exchanging $1$s and $2$s) is also a factor. The same question holds for the reversal invariance (reading from right to left).
	\item \textbf{Recurrence:} Every factor of $\mathbb{K}$ occurs infinitely often. 
\end{itemize}

Regarding the complexity, Dekking \cite{Dek80} conjectured a strictly polynomial growth $p_\mathbb{K}(n) \asymp n^\rho$ with a transcendental exponent $\rho = \frac{\log 3}{\log 3/2} \approx 2.71$. Other fundamental questions remain unanswered, such as Keane's question \cite{Kea91} regarding the existence of the asymptotic frequency of the symbol $1$ in $\mathbb{K}$, conjectured to be $1/2$. The current best rigorous bounds are $[0.49992, 0.50008]$, established by Rao \cite{Rao12}. 

To gain insight into the mechanisms governing self-reading sequences in a more general framework, let us formalize the action of the run-length encoding operator $\Delta$. For a non-stationary sequence $x=(x_n)_{n \in \mathbb{N}}$ over a finite alphabet $\Sigma \subset \mathbb{N}$, $\Delta$ maps $x$ to the sequence of the lengths of its consecutive mono-symbol blocks. If $x$ is non-stationary, its \textbf{derivative} $\Delta(x)$, also denoted $\Delta x$,  is naturally a sequence over $\mathbb{N}$. Following the concept of \emph{smooth sequences} (or $C^\infty$-sequences) introduced in \cite{BL2003, BDLV06}, we say that $x$ is \emph{differentiable} over $\Sigma$ if $\Delta x \in \Sigma^\mathbb{N}$. Furthermore, if $x$ is infinitely differentiable over $\Sigma$ -- meaning that the $k$-th iterated derivative $\Delta^k x $ belongs to $\Sigma^\mathbb{N}$ for all $k \ge 0$ --  then $x$ is said to be \emph{smooth} (or $C^\infty$) over $\Sigma$.

Dekking \cite{Dek80, Dek81} established fundamental connections between the combinatorial properties of $C^\infty$-words and the Oldenburger-Kolakoski sequence. He demonstrated that the conjecture of \emph{mirror invariance} implies that $\mathbb{K}$ is \emph{recurrent}. Furthermore, he proved that the mirror invariance of $\mathbb{K}$ is equivalent to the statement that every smooth word is \emph{admissible}, thereby identifying the language of $\mathbb{K}$ with the entire class of $C^\infty$-words.
In this context, $C^\infty$-words are defined as finite words whose successive derivatives eventually lead to the empty word, provided that the first and last mono-symbol blocks are ignored during the process if they are incomplete \cite{Dek80}. Recent progress by Cassaigne and Henry \cite{cassaigne2026} provides new bounds for the complexity of these words, and formalizes the class of \emph{f-smooth} words.

A fundamental property of the set of smooth sequences is its uncountability, which is best understood through the \emph{column sequences} map (or vertical sequences) $\Phi$. For any smooth sequences $x$, the column sequences $\Phi(x) \in \Sigma^\mathbb{N}$ is defined by:
\[ \forall k \ge 0, \quad  \Phi(x)_k = \left(\Delta^k (x)\right)_0.\]
In other words, the $k$-th letter of $\Phi(x)$ corresponds to the first letter of the $k$-th derivative of $x$, thus recording the ``vertical'' evolution of the word's prefix under successive derivations (see Figure~\ref{fig:column_word}).

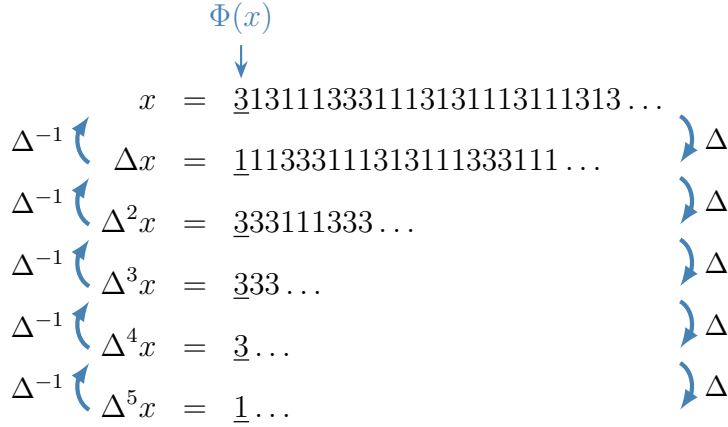
\begin{figure}[!ht]
	{\addtolength{\jot}{0.5em} 
	\begin{eqnarray*}
		\tikzmark{debLigne6} 		x			&	=	&	 \tikzmark{haut}\underline{3}   1   3   111   333   111   3   1   3   111   3  111  3  1  3 \dots \tikzmark{finLigne6} \\
		\tikzmark{debLigne5} 		\Delta x 			&	=	&	 \underline{1}11  333  111   3   1  3  111  333   111  \dots  \tikzmark{finLigne5}   \\ 
		\tikzmark{debLigne4} 		\Delta^2 x 			&	=	&	 \underline{3} 33  111  333  \dots  \tikzmark{finLigne4}   \\
		\tikzmark{debLigne3} 		\Delta^3 x 			&	=	&	 \underline{3} 33 \dots  \tikzmark{finLigne3}  \\
	\tikzmark{debLigne2} 			\Delta^4 x 			&	=	&	 \underline{3}  \dots \tikzmark{finLigne2}  \\ 
	\tikzmark{debLigne1} 	\Delta^5 x 			&	=	&	\tikzmark{bas}\underline{1} \dots \tikzmark{finLigne1}
	\end{eqnarray*}
}

	\begin{tikzpicture}[remember picture, overlay, >=Stealth, SteelBlue]
		\coordinate (XdebLigne1) at ([xshift=-0.15cm] pic cs:debLigne1);
		\coordinate (XdebLigne2) at ([xshift=-0.15cm] pic cs:debLigne2);
		\coordinate (XdebLigne3) at ([xshift=-0.15cm] pic cs:debLigne3);
		\coordinate (XdebLigne4) at ([xshift=-0.15cm] pic cs:debLigne4);
		\coordinate (XdebLigne5) at ([xshift=-0.15cm] pic cs:debLigne5);
		\coordinate (XdebLigne6) at ([xshift=-0.15cm] pic cs:debLigne6);
		
		\coordinate (XfinLigne6) at ([xshift=0.15cm] pic cs:finLigne6);
								
		\coordinate (yD1)   at (pic cs:debLigne1);
		\coordinate (yD2)   at (pic cs:debLigne2);
		\coordinate (yD3)   at (pic cs:debLigne3);
		\coordinate (yD4)   at (pic cs:debLigne4);
		\coordinate (yD5)   at (pic cs:debLigne5);
		\coordinate (yD6)    at (pic cs:debLigne6);
		
		\coordinate (start) at (pic cs:haut);
		\node (phi) at ([yshift=1.2cm, xshift=0.1cm]start) {$\Phi(x)$};
		\draw[->, thick] (phi) -- ([yshift=0.4cm, xshift=0.1cm]start);
		

	\draw[-latex, ultra thick, SteelBlue] ([yshift=1mm]XdebLigne1 |- yD1)  to[bend left=60] node[midway, left, black, font=\small] {$\Delta^{-1}$} ([yshift=-1mm]XdebLigne1 |- yD2);	
	\draw[-latex, ultra thick, SteelBlue] ([yshift=1mm]XdebLigne1 |- yD2)  to[bend left=60] node[midway, left, black, font=\small] {$\Delta^{-1}$} ([yshift=-1mm]XdebLigne1 |- yD3);	
	\draw[-latex, ultra thick, SteelBlue] ([yshift=1mm]XdebLigne1 |- yD3)  to[bend left=60] node[midway, left, black, font=\small] {$\Delta^{-1}$} ([yshift=-1mm]XdebLigne1 |- yD4);	
	\draw[-latex, ultra thick, SteelBlue] ([yshift=1mm]XdebLigne1 |- yD4)  to[bend left=60] node[midway, left, black, font=\small] {$\Delta^{-1}$} ([yshift=-1mm]XdebLigne1 |- yD5);	
	\draw[-latex, ultra thick, SteelBlue] ([yshift=1mm]XdebLigne1 |- yD5)  to[bend left=60] node[midway, left, black, font=\small] {$\Delta^{-1}$} ([yshift=-1mm]XdebLigne1 |- yD6);	
	
	\draw[-latex, ultra thick, SteelBlue] ([yshift=-1mm]XfinLigne6 |- yD6)  to[bend left=60] node[midway, right, black, font=\small] {$\Delta$} ([yshift=1mm]XfinLigne6 |- yD5);	
	\draw[-latex, ultra thick, SteelBlue] ([yshift=-1mm]XfinLigne6 |- yD5)  to[bend left=60] node[midway, right, black, font=\small] {$\Delta$} ([yshift=1mm]XfinLigne6 |- yD4);	
	\draw[-latex, ultra thick, SteelBlue] ([yshift=-1mm]XfinLigne6 |- yD4)  to[bend left=60] node[midway, right, black, font=\small] {$\Delta$} ([yshift=1mm]XfinLigne6 |- yD3);	
	\draw[-latex, ultra thick, SteelBlue] ([yshift=-1mm]XfinLigne6 |- yD3)  to[bend left=60] node[midway, right, black, font=\small] {$\Delta$} ([yshift=1mm]XfinLigne6 |- yD2);	
	\draw[-latex, ultra thick, SteelBlue] ([yshift=-1mm]XfinLigne6 |- yD2)  to[bend left=60] node[midway, right, black, font=\small] {$\Delta$} ([yshift=1mm]XfinLigne6 |- yD1);		
	\end{tikzpicture}
	\caption{The map $\Phi$ provides a one-to-one correspondence between the set of smooth sequences and the set of all infinite sequences over $\Sigma$ \cite{BJP08}. \label{fig:column_word}}
\end{figure}

The combinatorial properties of smooth sequences remain remarkably under-explored. Indeed, only a few combinatorial and dynamical properties have been formally established to date, and these results are strictly limited to a small number of specific alphabets of uniform parity. In these specific cases, where all symbols are either even or odd, the structural regularity and internal symmetries allow for a more systematic analysis. In contrast, mixed-parity alphabets lack such uniform constraints, making them significantly more complex; consequently, most fundamental questions regarding them remain entirely open:

Let $w \in \Sigma^*$ be a finite word of length $|w|$. An occurrence of $w$ in a sequence $x \in \Sigma^* \cup \Sigma^{\mathbb{N}} \cup \Sigma^{\mathbb{Z}}$ is an index $i$ such that $x_{i+k} = w_k$ for all $0 \le k < |w|$. We denote by $|x_0 \cdots x_{n-1}|_w$ the number of occurrences of $w$ in the prefix of length $n$ of $x$.
The \emph{frequency (of occurrence)} of $w$ in $x$ is defined as the limit:
\begin{equation}
 \label{eq:def_freq}
 \mathrm{freq}_x(w) := \lim_{n \to \infty} \frac{|x_0 \cdots x_{n-1}|_w}{n},
\end{equation}
provided that this limit exists.

\begin{enumerate}
	\item \textbf{Uniform Parity (Even or Odd):} When all letters in $\Sigma$ share the same parity, the system becomes more tractable, though results are currently limited to a few specific classes. 
\begin{itemize}
    \item \textit{Even Alphabets:} The structure is remarkably rigid. If $a$ and $b$ are two non-zero even integers, every smooth word over $\{a, b\}$ possesses a symbol frequency of $1/2$. This property is a direct consequence of the reconstruction process via the column map $\Phi$. Starting from a single even letter $\ell$ at some derivation level, its first pre-image $\Delta^{-1} \ell$ is a monochromatic block ($a^\ell$ or $b^\ell$). Because $\ell$ is even, the second pre-image $\Delta^{-2} \ell$ is composed of $\ell/2$ pairs of alternating blocks, taking one of the following forms:
\[ (a^a b^a)^{\ell/2}, \quad (b^a a^a)^{\ell/2}, \quad (a^b b^b)^{\ell/2}, \quad \text{or} \quad (b^b a^b)^{\ell/2}. \]
Each of these patterns contains an equal number of occurrences of $a$ and $b$. Since $a$ and $b$ are themselves even, this perfect local balance is invariant under all subsequent applications of $\Delta^{-1}$. This ensures that the letters remain perfectly equidistributed throughout the ``integration'' process, regardless of the specific vertical sequence $\Phi$ used to reconstruct the word.
	
\item \textit{Odd Alphabets:} These systems exhibit an intermediate behavior that remains structured but is no longer universally balanced. For instance, over the alphabet $\Sigma = \{1, 3\}$, Brlek, Jamet, and Paquin \cite{BJP08} proved that lexicographically extremal smooth sequences possess a specific symbol frequency. For instance, the frequency of $1$s in the minimal smooth word with respect to the lexicographical order $m_{\{1,3\}}$ is $\frac{\sqrt{5}}{\sqrt{5}+1} \approx 0.62$. While these results describe specific extremal cases, the distribution of symbols in the entire set of smooth words over odd alphabets is not yet fully understood. Baake and Sing \cite{Baake_Sing_2004} showed that the Kolakoski-(3,1) sequence relates to model sets with pure point diffraction spectra, providing a structural bridge between these sequences and quasicrystals. In this paper, we aim to conduct a more general study of letter frequencies for all smooth sequences over the alphabet $\Sigma=\{1,3\}$.
\end{itemize}
	
	\item \textbf{Mixed Parity:} This category, which includes the classical alphabet $\{1, 2\}$, represents the primary challenge in the field. Although the column map $\Phi$ establishes that the set of smooth words is uncountable, this case lacks the structural regularity induced by uniform parity. In this mixed setting, virtually all key dynamical properties—such as unique ergodicity, recurrence, or minimality—remain long-standing conjectures that have resisted proof for decades.
\end{enumerate}

 To address fundamental questions for odd alphabets, we extend the study of smooth sequences from the classical one-sided sequences to the bi-infinite ones. This transition is essential for our ergodic approach, as it enables us to work within a compact, shift-invariant space: the subshift $X \subset \{1,3\}^\mathbb{Z}$ of bi-infinite smooth sequences. Our main result is that $X$ can be described as the disjoint union of smaller subshifts
 \[
   X = \bigsqcup_{\tau \in \{0,1\}^\NN} X_\tau,
 \]
 where each $X_\tau$ is uniquely ergodic. 
 This result implies that the asymptotic frequency of any finite pattern is well-defined for both bi-infinite and one-sided smooth sequences over $\{1,3\}$. The proof involves recoding smooth sequences over a new alphabet $\{\A,\B,\C,\D\}$ (Section~\ref{sec:recoding}), giving rise to recoding subshifts $(Y_\tau)_{\tau \in \{0,1\}^\NN}$ with a nice substitutive structure (Section~\ref{sec:substitutions}). Then we establish the unique ergodicity of the subshifts $Y_\tau$ (Section~\ref{sec:UE_Y}), from which we get the unique ergodicity of $X_\tau$ (Section~\ref{sec:UE_X}). We also discuss the minimality of $X_\tau$ in Section~\ref{sec:minimal}.
 
\section{Objects and tools}\label{sec:objects_tools}

    \subsection{Symbolic dynamics and ergodic theory}
    \label{sec:basic}
    We recall below the basic notions in symbolic dynamics and ergodic theory that we will use in the paper. 
    Let $\Sigma$ be a finite alphabet. The space $\Sigma^\ZZ$ of bi-infinite sequences over $\Sigma$ is endowed with the product topology, which makes it a compact metrizable space. For a finite word $w = w_0 w_1\dots w_{\ell - 1} \in \Sigma^*$, we denote by $[w]$ the \emph{cylinder set} 
    \[ [w] := \{x \in \Sigma^\ZZ \,|\, \forall i = 0,\ldots,\ell-1,\ x_i = w_i \}, 
    \]
    which is a clopen subset of $\Sigma^\ZZ$ (in particular, $\ind{[w]}$ is continuous). 
    The shift map $S:\Sigma^\ZZ\to\Sigma^\ZZ$ defined by $(Sx)_n := x_{n+1}$ for all $x\in\Sigma^\ZZ$ and all $n\in\ZZ$ is a bijective continuous transformation of $\Sigma^\ZZ$. 
    A nonempty subset $X\subset \Sigma^\ZZ$ is said to be a \emph{subshift} if it is closed and satisfies $S(X) = X$. In this case, the pair $(X,S)$ is called a \emph{symbolic dynamical system}, which is a particular case of a \emph{topological dynamical system} (defined by the action of a homeomorphism on a compact metrizable space). The subshift $X$ is said to be \emph{minimal} if it contains no proper subshift. 
    
    For a subshift $X$, we denote by $\M_1(X)$ the space of all Borel probability measures on $X$. Endowed with the topology of weak convergence, $\M_1(X)$ is itself a compact metrizable space. 
    A probability measure $\mu\in\M_1(X)$ is said to be $S$-invariant if $\mu(S^{-1}A) = \mu(A)$ for all Borel set $A\subset X$ (equivalently, $\int_X f\,d\mu = \int_X f\circ S\,d\mu$ for all $f\in C(X)$). By Kryloff-Bogolyuboff theorem, the space $\M_1(X,S)$ of $S$-invariant probability measures is never empty, and a classical proof of this result is based on the following fact: for $x \in X$ and $n\ge 1$, we define the \emph{empirical measure} $m_x^n\in\M_1(X)$ as the average of the Dirac masses on the first $n$ points of the positive orbit of $x$:
    \[
    m_x^n := \frac{1}{n} \sum_{j = 1}^n \delta_{S^nx},
    \]
    then any accumulation point of the sequence of empirical measures $\bigl(m_x^n\bigr)_{n\ge 1}$ is necessarily $S$-invariant. 
    If $x\in X$ is such that $\bigl(m_x^n\bigr)_{n\ge 1}$ has a unique accumulation point $\mu$, then we say that \emph{$x$ is generic for $\mu$}. Note also that the convergence $m_x^n \to \mu$ is equivalent to the property
    \[
    \forall w\in\Sigma^*,\ \mathrm{freq}_x(w) = \lim_{n\to\infty} \frac{1}{n} \sum_{j = 1}^n \ind{[w]}(S^nx) = \lim_{n\to\infty} m_x^n([w]) = \mu([w]).
    \]
    
    The subshift $X$ is said to be \emph{uniquely ergodic} if $\M_1(X,S)$ is reduced to a singleton $\{\mu\}$. In this case, every $x \in X$ is generic for $\mu$. The converse is also true: if there exists some $\mu \in \M_1(X,S)$ such that every $x\in X$ is generic for $\mu$, then $\mu$ is the only $S$-invariant probability measure on $X$, and moreover $\mu$ is ergodic (that is: $\mu$ gives measure 0 or 1 to any $S$-invariant subsets of $X$).
    Indeed, we recall that $\M_1(X,S)$ is a compact convex set whose extremal points are the ergodic measures (see \emph{e.g.} \cite[Section~3.5]{EisnerFarkas2025}). But, by the Birkhoff ergodic theorem, for any ergodic $S$-invariant probability measure $\nu$ on $X$, we know that $\nu$-almost all $x\in X$ satisfies $m_x^n \tend{n}{\infty} \nu$.

    \subsection{Bi-infinite smooth sequences over $\{1,3\}$}

The operator $\Delta$ maps any non-stationary bi-infinite sequence $x$ to the sequence of lengths of its consecutive mono-symbol blocks. To properly index this derived sequence, we adopt the following convention: the value $(\Delta x)_0$ at the origin of the derivative is the length of the mono-symbol block in $x$ that contains the term $x_0$. This alignment is illustrated in Figure~\ref{fig:bi-infinite-smooth-sequences}, where the term at the origin is underlined.

	\begin{figure}[!ht]
	\centering
	\begin{alignat*}{2}
		\phantom{\Delta^{16}}x &= \ldots3331113131\underline{1}1333111311\ldots \\
		\Delta\,
		x &= \ldots3331333111\underline{3}3313331113\ldots \\
		\Delta^{2}x &= \ldots3111333133\underline{3}1333111333\ldots \\
		\Delta^{3}x &= \ldots3133313331\underline{3}1333111333\ldots \\
		\Delta^{4}x &= \ldots3331113131\underline{1}1333111311\ldots \\
		\Delta^{5}x &= \ldots3331333111\underline{3}3313331113\ldots \\
		\Delta^{6}x &= \ldots3111333133\underline{3}1333111333\ldots \\
		\Delta^{7}x &= \ldots3133313331\underline{3}1333111333\ldots 
	\end{alignat*}
	\caption{Successive derivatives of a bi-infinite smooth sequence $x$. The underlined terms indicate the position of the origin (index $0$) for each sequence, illustrating the alignment convention.}
	\label{fig:bi-infinite-smooth-sequences}
\end{figure}

Having defined the action of the derivative operator $\Delta$ on bi-infinite sequences, we now formalize the concepts of differentiability and smoothness in this setting:

\begin{definition}
	A bi-infinite sequence $x \in \{1,3\}^\mathbb{Z}$ is said to be \emph{differentiable} if its derivative $\Delta x$ is well-defined (i.e., $x$ is non-stationary) and belongs to $\{1,3\}^\mathbb{Z}$. Furthermore, $x$ is said to be \emph{smooth} if it is infinitely differentiable.
\end{definition}

To ensure that the set of bi-infinite smooth sequences is non-empty, let us notice that the column sequence $\Phi(w)$ provides a natural mechanism to uniquely extend any one-sided smooth sequence $w \in \{1,3\}^\mathbb{N}$ to the left. Because one-sided smooth sequences over a two-letter alphabet consist of alternating mono-symbol blocks, embedding $w$ into a bi-infinite sequence $x$ (such that $x_n = w_n$ for $n \ge 0$) implies that the term just before the origin must differ from the one at the origin ($x_{-1} \neq x_0$). This principle applies to every derivation level. Since $\Phi(w)_k$ is the first letter of the $k$-th derivative $\Delta^k w$, the term immediately preceding it in the bi-infinite extension is necessarily its complement, namely $(\Delta^k x)_{-1} = \overline{\Phi(w)}_k$ (where $\overline{1}=3$ and $\overline{3}=1$). This allows for a unique, step-by-step reconstruction of the sequence at all negative indices. While this formally establishes that the set of bi-infinite smooth sequences is non-empty, this set is actually much richer and contains many other sequences beyond those generated through this specific extension process.

Let us denote by $X \subset \{1,3\}^\mathbb{Z}$ the set of all bi-infinite smooth sequences, and by $S$ the standard shift operator defined by $(Sx)_n = x_{n+1}$ for all $n \in \mathbb{Z}$. The set $X$ is obviously invariant under $S$. Furthermore, $X$ is a closed subset of the product space $\{1,3\}^\mathbb{Z}$, which is compact by Tychonoff's theorem. As a closed and shift-invariant subset of a compact space, $X$ constitutes a compact \emph{subshift}, providing the exact topological framework required for our ergodic study.

\subsubsection{Integration of smooth sequences}

One may ask how to reconstruct a bi-infinite smooth sequence $x$ using only its vertical trace. The standard column sequence $\Phi(x)$ is insufficient for this purpose since it lacks alignment information. Indeed, while a value of $1$ at the origin of $\Delta^k x$ unambiguously implies a block of length $1$ in $\Delta^{k-1} x$, a value of $3$ generates a block of length $3$. In the latter case, the origin of $\Delta^{k-1} x$ could be aligned with the \textbf{Left} ($\mathtt{L}$), \textbf{Middle} ($\mathtt{M}$), or \textbf{Right} ($\mathtt{R}$) element of this block.

To resolve this ambiguity, we introduce the notion of a \emph{generalized column sequence}, where each term $3$ is annotated with a position tag $p \in \{\mathtt{L}, \mathtt{M}, \mathtt{R}\}$. This enriched structure allows us to perform the \emph{integration} of the sequence from top to bottom. Starting from a single value at a high derivative level, we can iteratively expand each level to the next, using the parity of indices to determine the alternating values and the position tags to fix the alignment of the origin. Figure~\ref{fig:integration} illustrates this reconstruction (integration) of a bi-infinite smooth sequence $x$ from a generalized column sequence.

\begin{figure}[!ht]
	\centering
\begin{alignat*}{2}
	\phantom{\Delta^{6}}x &= \dots 133311\underline{1}3331313331113331 \dots \\
	\Delta^{1}x &=   \phantom{0}\phantom{0}\phantom{0}\phantom{0} \dots 13\underline{3}31113331 \dots \phantom{0}\phantom{0}\phantom{0}\phantom{0}\phantom{0}\phantom{0}\phantom{0}\phantom{0} \\
	\Delta^{2}x &= \phantom{0}\phantom{0}\phantom{0}\phantom{0}\phantom{0} \dots 1\underline{3}331 \dots \phantom{0}\phantom{0}\phantom{0}\phantom{0}\phantom{0}\phantom{0}\phantom{0}\phantom{0}\phantom{0}\phantom{0}\phantom{0}\phantom{0}\phantom{0} \\
	\Delta^{3}x &= \phantom{0}\phantom{0}\phantom{0}\phantom{0}\phantom{0} \dots 1\underline{3}1 \dots \phantom{0}\phantom{0}\phantom{0}\phantom{0}\phantom{0}\phantom{0}\phantom{0}\phantom{0}\phantom{0}\phantom{0}\phantom{0}\phantom{0}\phantom{0}\phantom{0}\phantom{0} \\
	\Delta^{4}x &= \phantom{0}\phantom{0}\phantom{0}\phantom{0}\phantom{0} \dots 1\underline{1}1 \dots \phantom{0}\phantom{0}\phantom{0}\phantom{0}\phantom{0}\phantom{0}\phantom{0}\phantom{0}\phantom{0}\phantom{0}\phantom{0}\phantom{0}\phantom{0}\phantom{0}\phantom{0} \\
	\Delta^{5}x &= \phantom{0}\phantom{0}\phantom{0}\phantom{0}\phantom{0}\phantom{0} \dots \underline{3} \dots \phantom{0}\phantom{0}\phantom{0}\phantom{0}\phantom{0}\phantom{0}\phantom{0}\phantom{0}\phantom{0}\phantom{0}\phantom{0}\phantom{0}\phantom{0}\phantom{0}\phantom{0}\phantom{0} \\
	\Delta^{6}x &= \phantom{0}\phantom{0}\phantom{0}\phantom{0}\phantom{0}\phantom{0} \dots \underline{1} \dots \phantom{0}\phantom{0}\phantom{0}\phantom{0}\phantom{0}\phantom{0}\phantom{0}\phantom{0}\phantom{0}\phantom{0}\phantom{0}\phantom{0}\phantom{0}\phantom{0}\phantom{0}\phantom{0}
\end{alignat*}
	\caption{Reconstruction (integration) of a smooth bi-infinite sequence $x$ from the generalized column sequence  
	$\bigl(1, 3_{\mathtt{R}}, 3_{\mathtt{M}}, 3_{\mathtt{L}}, 1, 3_{\mathtt{M}}, 1,\dots\bigr)$.
	The underlined term indicates the origin (index 0) at each level. The alignment is determined by the position tags ($\mathtt{L}/\mathtt{M}/\mathtt{R}$) associated with blocks of length 3.}
	\label{fig:integration}
\end{figure}
     
\subsection{Recoding of a smooth sequence}
\label{sec:recoding}
    \subsubsection{Subshift of recoding sequences} 
Recall that we denote by $X \subset \{1,3\}^\mathbb{Z}$ the subshift of bi-infinite smooth sequences over the alphabet $\Sigma=\{1,3\}$. Since the derivative of a smooth sequence is also a sequence over the same alphabet, any $x \in X$ is necessarily a concatenation of blocks of lengths $1$ and $3$. Consequently, a smooth sequence is composed of only four types of mono-symbol blocks, which we recode using the alphabet $\mathcal{A} = \{\A, \B, \C, \D\}$ defined as follows:
\begin{equation*}
  	\A = 1, \quad \B = 3, \quad \C = 111, \quad \D = 333.
\end{equation*}
    
The \emph{recoding} of a smooth sequence $x \in X$ is the sequence $y = \texttt{rec}(x) \in \mathcal{A}^\mathbb{Z}$ where each term $y_i$ encodes a consecutive mono-symbol block (or run) in $x$. By convention, $y_0$ is the symbol encoding the block that contains $x_0$. We denote by $Y = \rec(X) \subset \mathcal{A}^\mathbb{Z}$ the subshift of all such recodings.

An example of this correspondence is shown below, where the underlined symbol indicates the zero position of the sequence:
\[\arraycolsep=2pt
\begin{array}{lcrccccccccl}
	x & = & \cdots 333 &  1\underline{1}1  &  333  &   1 &  3 &  1  &  333  &  111  &  333  &  1\cdots \\
    y = \text{rec}(x) & = & \cdots \D \; & \; \underline{\C} \; &  \; \D \; &   \A &  \B &   \A &  \; \D \; &  \; \C  \; &  \; \D \;&   \A \cdots
\end{array}
\]
 
Observe that each symbol in the recoding sequence $y = \rec(x)$ corresponds to a single symbol in the derivative $\Delta(x)$: a $1$ in $\Delta(x)$ gives rise either to an $\A$ or to a $\B$ in $y$, whereas a $3$  in $\Delta(x)$ gives rise either to a $\C$ or to a $\D$ in $y$. Now, the structuring of $\Delta(x)$ into mono-symbol blocks yields a central property of $y = \rec(x)$: its decomposition into so-called \emph{elementary blocks}, described in the following array.
\[
\begin{array}{ccc}
\text{mono-symbol block in }\Delta(x) & \text{corresponding factor in }x & \text{elementary block in }y \\
 1  & 1 \text{ or }3 &  \A \text{ or } \B \\
 3  & 111 \text{ or }333 &  \C  \text{ or } \D  \\
 111  & 131 \text{ or }313 &  \A\B\A  \text{ or } \B\A\B  \\
 333  & 111333111 \text{ or }333111333 &  \C\D\C  \text{ or } \D\C\D  
\end{array}
\]
Note that these elementary blocks are either factors of $y$ of length 3, of the form $\A\B\A$, $\B\A\B$, $\C\D\C$, $\D\C\D$, or single letters $\A$, $\B$, $\C$, $\D$ that are not part of larger elementary blocks. Observe also that, since mono-symbol blocks in $\Delta(x)$ have length at most 3, elementary blocks in $y$ cannot overlap, and therefore there is a unique way of decomposing $y$ as a bi-infinite concatenation of elementary blocks.

\begin{remark}
Note that the recoding sequence $\rec(x) \in \A^\ZZ$ is well defined as soon as $x \in \{1,3\}^\ZZ$ is differentiable. So we may use $\rec(x)$ for differentiable sequences $x$ whose smoothness is \textit{a priori} unknown, but then $\rec(x)$ is not necessarily in $Y$.
\end{remark}

\subsubsection{The induced derivative operator}
    
    The derivative operator $\Delta$ acting on $X$ naturally induces an operator on $Y$,  also denoted by $\Delta$ and defined by 
    \[ 
    \Delta(\rec(x)) := \rec(\Delta(x)).
    \]
This definition ensures that the derivation and recoding operations commute.
      
In practice, the derivative $\Delta y$ can be computed directly from $y \in Y$ by applying the following set of local rewriting rules acting on elementary blocks of $y$, without reconstructing its pre-image $x \in X$. 
    \begin{equation*}
    	\Delta : \left| \begin{array}{lcl}
    		\A\B\A, \B\A\B & \longmapsto & \C \\
    		\C\D\C, \D\C\D & \longmapsto & \D \\
    		\A, \B & \longmapsto & \A \\
    		\C, \D & \longmapsto & \B
    	\end{array} \right.
    \end{equation*}    
Again, the zero-coordinate of $\Delta y$ corresponds to the elementary block containing the zero-coordinate of $y$. 
    
Let us illustrate this process using the sequence from the previous example:    
    \[
	\begin{tikzcd}[row sep=2.5cm, column sep={1.2cm}]
		x = \cdots 333  1 \underline{1}1   333     1   3   1    333    111    333  \cdots 
		\arrow[r, "\Delta"] \arrow[d, "\rec"'] 
		& 
		\Delta x = \cdots 3 \underline{3} 3  111  333 \cdots 
		\arrow[d, "\rec"] 
		\\
\phantom{333  1 1   333}y = \cdots  \D \underline{\C}     \D    \A    \B    \A    \D    \C    \D    \A \cdots 
		\arrow[r, "\Delta"'] 
		& 
\Delta y = \cdots \underline{\D}  \C  \D \cdots \phantom{{} 111  333} 
	\end{tikzcd}
    \]
 
\subsubsection{Alternating sequences}

Any smooth sequence $x \in X$ alternates between mono-symbol blocks in $\{1,111\}$ and mono-symbol blocks in $\{3,333\}$. Consequently, any recoding $y \in Y$ is \emph{alternating} in the following sense:

\begin{definition}
\label{def:alternating}
      A sequence $y \in \{\A,\B,\C,\D\}^\ZZ$ is said to be \emph{alternating} if it contains neither two consecutive symbols in $\{\A,\C\}$ nor two consecutive symbols in $\{\B,\D\}$.
\end{definition}

This results in the following lemma. 

\begin{lemma}\label{lemma:P7}
Let $y = \rec(x)\in Y$,  with $x \in X$, be the recoding of a smooth bi-infinite sequence, and let $w$ be a finite factor of $y$. We have
\[ \begin{cases}
 \occu{w}{}{\A} + \occu{w}{}{\C} = \occu{w}{}{\B} + \occu{w}{}{\D} = k & \textup{if } |w| = 2k, \\
	\left\{ \occu{w}{}{\A} + \occu{w}{}{\C},\  \occu{w}{}{\B} + \occu{w}{}{\D}\right\} = \{k,k+1\}  & \textup{if } |w| = 2k+1. 
\end{cases}
\]
Therefore,
\[
\max \left\{ 
\left| \dfrac{|w|_\A + |w|_\C}{|w|} - \dfrac{1}{2}  \right|, \,
\left| \dfrac{|w|_\B + |w|_\D}{|w|} - \dfrac{1}{2}  \right|
\right\}  \leq \dfrac{1}{|w|}. 
\]
\end{lemma}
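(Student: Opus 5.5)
The plan is to first show that every $y \in Y$ is alternating in the sense of Definition~\ref{def:alternating}, and then derive both counting statements from alternation alone.

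\textbf{Alternation.} Write $y=\rec(x)$ with $x\in X$. By definition of the recoding, consecutive letters of $y$ encode consecutive maximal mono-symbol blocks of $x$. Two consecutive maximal runs of a sequence over a two-letter alphabet consist of different symbols, so one is a run of $1$s and the next is a run of $3$s. Since $x$ is differentiable over $\{1,3\}$, every run has length $1$ or $3$. Hence runs of $1$s are encoded by $\A$ or $\C$, runs of $3$s by $\B$ or $\D$, and the letters of $y$ alternate between the classes $\{\A,\C\}$ and $\{\B,\D\}$.

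\textbf{Counting.} Let $w=w_0\cdots w_{n-1}$ be a factor of $y$. By alternation, the letters $w_i$ with $i$ even all lie in one class, and those with $i$ odd all lie in the other. There are $\lceil n/2\rceil$ even indices and $\lfloor n/2\rfloor$ odd indices.
\begin{itemize}
\item If $n=2k$, both counts equal $k$, giving $\occu{w}{}{\A}+\occu{w}{}{\C}=\occu{w}{}{\B}+\occu{w}{}{\D}=k$.
\item If $n=2k+1$, the two counts are $k+1$ and $k$ in some order, according to which class $w_0$ lies in. This gives the equality of sets $\{k,k+1\}$.
\end{itemize}

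\textbf{Inequality.} In the even case, both ratios equal exactly $1/2$. In the odd case, each ratio is either $k/(2k+1)$ or $(k+1)/(2k+1)$, and
\[
\left|\frac{k}{2k+1}-\frac12\right| = \left|\frac{k+1}{2k+1}-\frac12\right| = \frac{1}{2(2k+1)} \le \frac{1}{|w|}.
\]
This is in fact a factor $2$ better than the stated bound.

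\textbf{Main point of care.} There is no real obstacle. The only step needing attention is the first: the argument must use that $x$ is non-stationary, so that runs are finite and $\rec(x)$ is defined, and that all run lengths lie in $\{1,3\}$, so that each run is encoded by exactly one letter of the correct class. Both facts follow from differentiability of $x$.
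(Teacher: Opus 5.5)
Your proposal is correct and follows essentially the paper's own route: the paper derives the lemma directly from the observation that every $y \in Y$ is alternating (runs of $1$s and $3$s alternate in $x$, each of length $1$ or $3$), and your parity count on the indices of $w$ is exactly the argument left implicit there. Your observation that the odd case actually yields the sharper bound $\frac{1}{2|w|}$ is also correct.
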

                
Consequently, while the asymptotic frequencies of letters $1$ and $3$ remain to be determined, this bound proves that the aggregated frequency of letters encoding blocks of $1$s (i.e., $\A$ and $\C$) converges to $1/2$ as $|w| \to \infty$.

\subsubsection{Commutation formula}\label{subsubsection::Commutation}

Observe that the derivative operator $\Delta$ and the shift $S$ do not generally commute. To illustrate this, consider the sequence $y = \cdots \A\B\A  \underline{\C}\D\C  \A\B\A \cdots$.
\begin{itemize}
	\item On the one hand, deriving $y$ gives $\Delta y = \cdots \C  \underline{\D}  \C \cdots$, and subsequently shifting it yields $S(\Delta y) = \cdots \C  \D \underline{\C} \cdots$.
	\item On the other hand, shifting $y$ first gives $S(y) = \cdots \A\B\A  \C  \underline{\D}\C  \A\B\A \cdots$. Applying the derivation to this shifted sequence yields $\Delta(Sy) = \cdots \C  \underline{\D} \C \cdots$, which differs from $S(\Delta y)$.
\end{itemize}

This non-commutativity occurs because a single shift in $Y$ moves the origin to the next letter, which may still belong to the same aggregated pattern (here, the origin moved from $\C$ to $\D$ within the $\C\D\C$ block). Consequently, the origin in the derived sequence does not move at all. However, it is possible to relate the shift of a derived sequence to the derivation of a shifted sequence by carefully tracking these block boundaries.

Just as with the original sequences in $X$, the concept of being ``well-aligned'' with the origin is crucial in $Y$: we say that $y \in Y$ is \emph{well-aligned} if the elementary block containing its zero-coordinate starts at the zero position, and we denote by $B_Y$ the subset of $Y$ constituted of well-aligned sequences. 
More generally, for any $L \geq 1$, we define the set of sequences that remain well-aligned over their first $L$ successive derivatives:
\[
B_Y^L := \{y \in Y \mid y, \Delta y, \ldots, \Delta^{L-1} y  \text{ are well-aligned} \}.
\]

To quantify how many times the sequence aligns with these boundaries under successive shifts, we introduce the counting function. For any $y \in Y$, $L \geq 1$, and $n \geq 1$, let:
\[
\Sigma_y^L(n) := \sum_{k=1}^n \ind{B_Y^L}(S^k y)
\]
where $\ind{A}$ denotes the indicator function of the set $A$. By convention, we extend this to $n = 0$ by setting $\Sigma_y^L(0) = 0$. For brevity, we will often use the shorthand $\Sigma_y(n) := \Sigma_y^1(n)$. This counting function leads directly to the following fundamental property, which we refer to as the \emph{commutation formula}:
\begin{proposition}[Commutation formula]
\label{prop:commutation}
For all $L \geq 1$, $n \geq 0$, and $y \in Y$, the following relation holds:
\begin{equation}
	\label{eq:commutation}
	\Delta^L(S^n y) = S^{\Sigma_y^L(n)}(\Delta^L y).
\end{equation}
\end{proposition}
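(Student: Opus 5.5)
We argue by induction, first on $n$ for $L=1$, then on $L$.

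\emph{Case $L=1$.} Fix $y\in Y$. We show by induction on $n$ that $\Delta(S^n y)=S^{\Sigma_y(n)}(\Delta y)$. For $n=0$ both sides equal $\Delta y$, since $\Sigma_y(0)=0$.

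For the inductive step, write $z=S^n y$. The elementary-block decomposition of $S y$ is the decomposition of $y$ shifted by one. It is intrinsic to the bi-infinite sequence, because it comes from the unique non-overlapping parsing into blocks, and so does not depend on the origin.

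\begin{itemize}
\item If $z_1$ lies in the same elementary block as $z_0$, the derivative sequences of $z$ and $Sz$ coincide as sequences. Their origins both point to the image of that same block. Hence $\Delta(Sz)=\Delta z$. In this case $Sz$ is not well-aligned, since the block containing its coordinate $0$ started at coordinate $-1$ or earlier, so $\ind{B_Y}(S^{n+1}y)=0$.
\item If $z_1$ starts a new elementary block, the origin of the derivative moves to the next symbol, so $\Delta(Sz)=S(\Delta z)$. Here $Sz$ is well-aligned and the indicator equals $1$.
\end{itemize}

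Combining the two cases gives $\Delta(S^{n+1}y)=S^{\ind{B_Y}(S^{n+1}y)}\Delta(S^n y)=S^{\Sigma_y(n+1)}\Delta y$. The only point needing care is this dichotomy, namely that ``coordinate $1$ of $z$ begins a new block'' is exactly ``$Sz$ is well-aligned''. That holds by the definition of $B_Y$.

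\emph{Induction on $L$.} Assume the formula holds for $L$. Apply the $L=1$ case to $\Delta^L y\in Y$; this sequence lies in $Y$ because $\Delta$ maps $Y$ to $Y$, derivatives of smooth sequences being smooth. With $m=\Sigma^L_y(n)$ we get
\[
\Delta^{L+1}(S^n y)=\Delta\bigl(S^{m}\Delta^L y\bigr)=S^{\Sigma_{\Delta^L y}(m)}\Delta^{L+1}y .
\]
It remains to prove the counting identity $\Sigma_{\Delta^L y}\bigl(\Sigma^L_y(n)\bigr)=\Sigma^{L+1}_y(n)$.

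We prove this by induction on $n$, tracking the increments. Going from $n$ to $n+1$, the inner count $m$ increases by $1$ exactly when $S^{n+1}y\in B^L_Y$, and is unchanged otherwise.
\begin{itemize}
\item If $m$ does not change, the left side does not change. Also $S^{n+1}y\notin B^{L+1}_Y$, so the right side does not change either.
\item If $m$ increases to $m+1$, the left side increases by $\ind{B_Y}(S^{m+1}\Delta^L y)$. By the induction hypothesis, $S^{m+1}\Delta^L y=\Delta^L(S^{n+1}y)$. So the increment equals $1$ iff $\Delta^L(S^{n+1}y)$ is well-aligned. Given that $S^{n+1}y\in B^L_Y$, this holds iff $S^{n+1}y\in B^{L+1}_Y$.
\end{itemize}
So both sides have the same increments, which closes the induction. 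I expect this bookkeeping of the composed counting functions to be the main (though mild) obstacle. The $L=1$ step is essentially the definition.
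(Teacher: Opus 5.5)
Your proof is correct and follows the paper's argument. For $L=1$ you iterate the same local relation $\Delta(Sy)=S^{\ind{B_Y}(Sy)}(\Delta y)$ in $n$, and for the inductive step you make the same reduction to the identity $\Sigma_{\Delta^L y}\bigl(\Sigma^L_y(n)\bigr)=\Sigma^{L+1}_y(n)$. Your increment-by-increment induction on $n$ for that identity is just a rephrasing of the paper's change of variables $k\mapsto j(k)$ in the sum, and it uses the induction hypothesis $\Delta^L(S^{j}y)=S^{\Sigma^L_y(j)}\Delta^L y$ in exactly the same place.
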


\begin{proof}
Let $\mathcal{P}_L$ denote the property \eqref{eq:commutation} for a fixed $L \geq 1$. We proceed by induction on $L$.

\paragraph{Base case ($L=1$).}
First, observe that by definition of the derivative and the set $B_Y$, the shift operator satisfies the local relation:
\[
\Delta(S y) = \begin{cases}
	S(\Delta y) & \text{if } Sy \in B_Y, \\
	\Delta y & \text{otherwise.}
\end{cases}
\]
This can be written compactly as $\Delta(S y) = S^{\ind{B_Y}(Sy)}(\Delta y)$. 
By iterating this relation $n$ times, a straightforward induction on $n$ yields:
\[
\Delta(S^n y) = S^{\sum_{k=1}^n \ind{B_Y}(S^k y)}(\Delta y) = S^{\Sigma_y^1(n)}(\Delta y),
\]
which establishes $\mathcal{P}_1$.

\paragraph{Inductive step.}
Assume that $\mathcal{P}_L$ holds for some $L \geq 1$. 
Using the recursive definition of $\Delta^{L+1}$ and applying the induction hypothesis $\mathcal{P}_L$, we have
\begin{align*}
	\Delta^{L+1}(S^n y) &= \Delta\left( \Delta^L(S^n y) \right) \\
	&\stackrel{\mathcal{P}_L}{=} \Delta\left( S^{\Sigma_y^L(n)}(\Delta^L y) \right).
\end{align*}
Let $z := \Delta^L y$ and $N := \Sigma_y^L(n)$, so that the last expression becomes $\Delta(S^N z)$. By applying the base case $\mathcal{P}_1$ to $z$ with shift $N$, we obtain
\begin{equation}
	\label{eq:ind_step}
	\Delta^{L+1}(S^n y) = \Delta(S^N z) = S^{\Sigma_z^1(N)}(\Delta z) = 
	S^{\Sigma_z^1(N)}(\Delta^{L+1} y).
\end{equation}
It remains to show that the exponent $\Sigma_z^1(N) = {\Sigma_{\Delta^L y}^1(\Sigma_y^L(n))}$ in \eqref{eq:ind_step} is equal to $\Sigma_y^{L+1}(n)$. 
By definition, 
\[
\Sigma_{\Delta^L y}^1(\Sigma_y^L(n)) = \sum_{k=1}^{\Sigma_y^L(n)} \ind{B_Y}(S^k(\Delta^L y)).
\]
Now, to each $k \in \{1, \dots, \Sigma_y^L(n)\}$ we associate the first index $j = j(k) \in \{1,\dots,n\}$ such that $\Sigma_y^L(j) = k$. Obviously, we have $S^{j(k)}y \in B_Y^L$, and $k\mapsto j(k)$ defines a bijection between $\{1, \dots, \Sigma_y^L(n)\}$ and  the set 
\[ \bigl\{ j \in \{1,\dots,n\} \mid S^{j}y \in B_Y^L \bigr\}. \]
This allows us to perform a change of variable in the summation:
\begin{align*}
\sum_{k=1}^{\Sigma_y^L(n)} \ind{B_Y}(S^k(\Delta^L y)) &=
\sum_{j=1}^n \ind{B_Y^L}(S^j y)  \ind{B_Y}(S^{\Sigma_y^L(j)}\Delta^L(y)) \\
&= \sum_{j=1}^n \ind{B_Y^L}(S^j y)  \ind{B_Y}(\Delta^L(S^j y)).
\end{align*}
(For the second equality we use again the induction hypothesis.) But, by definition of $B_Y^{L+1}$, the sequence $S^j y$ belongs to $B_Y^{L+1}$ if and only if $S^j y \in B_Y^L$ and $\Delta^L(S^j y) \in B_Y$. Thus, the sum simplifies to 
\[
\sum_{k=1}^{\Sigma_y^L(n)} \ind{B_Y}(S^k(\Delta^L y)) = \sum_{j=1}^n \ind{B_Y^{L+1}}(S^j y) = \Sigma_y^{L+1}(n).
\]
Substituting this back into \eqref{eq:ind_step} yields $\Delta^{L+1}(S^n y) = S^{\Sigma_y^{L+1}(n)}(\Delta^{L+1} y)$, which proves $\mathcal{P}_{L+1}$.
\end{proof}

        \subsubsection{A lower bound for $\Sigma_y^L$}
        
        \begin{lemma}
            \label{lemma:Sigma_min}
            For all $n \in \mathbb{N}$, all $L \geq 1$ and all $y \in Y$, $\Sigma_y^L(n+3^L) \geq \Sigma_y^L(n) + 1$. Consequently, we always have
            \[ \Sigma_y^L(n) \geq \left\lfloor\frac{n}{3^L}\right\rfloor.
            \]
        \end{lemma}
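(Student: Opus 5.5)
Since $\Sigma_y^L(n+3^L)-\Sigma_y^L(n)=\sum_{k=n+1}^{n+3^L}\ind{B_Y^L}(S^k y)$, the first claim says that any window of $3^L$ consecutive shifts $S^{n+1}y,\dots,S^{n+3^L}y$ contains at least one sequence in $B_Y^L$. I would prove this by induction on $L$, using the commutation formula (Proposition~\ref{prop:commutation}) together with the fact that elementary blocks of any element of $Y$ have length at most $3$.

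\textbf{Base case $L=1$.} Elementary blocks of $y$ have length $1$ or $3$. Among the three consecutive positions $n+1,n+2,n+3$ of $y$, at least one is the first position of an elementary block. If position $m$ of $y$ starts an elementary block, then $S^m y$ is well-aligned, i.e.\ $S^m y\in B_Y$. This gives $\Sigma_y^1(n+3)\ge \Sigma_y^1(n)+1$.

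\textbf{Inductive step.} Assume the claim for $L$, for every $y\in Y$ and every $n$. Fix $n$ and split the window $\{n+1,\dots,n+3^{L+1}\}$ into three consecutive sub-windows of length $3^L$. By the induction hypothesis applied three times, $\Sigma_y^L(n+3^{L+1})-\Sigma_y^L(n)\ge 3$. Let $N=\Sigma_y^L(n)$, and let $j_1<j_2<j_3$ be the indices in the window with $S^{j_i}y\in B_Y^L$ and $\Sigma_y^L(j_i)=N+i$. By Proposition~\ref{prop:commutation}, $\Delta^L(S^{j_i}y)=S^{N+i}(\Delta^L y)$. Since $z=\Delta^L y\in Y$, the base case shows that one of $S^{N+1}z,S^{N+2}z,S^{N+3}z$ lies in $B_Y$. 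For that $i$, $S^{j_i}y\in B_Y^L$ and $\Delta^L(S^{j_i}y)\in B_Y$, which means $S^{j_i}y\in B_Y^{L+1}$. Hence $\Sigma_y^{L+1}(n+3^{L+1})\ge\Sigma_y^{L+1}(n)+1$. I expect the only delicate point to be this bookkeeping: the count of indices $j$ up to $j_i$ that lie in $B_Y^L$ is exactly $N+i$. This follows from the definition of $\Sigma_y^L$ and the bijection already used in the proof of Proposition~\ref{prop:commutation}.

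\textbf{Consequence.} Write $n=q3^L+r$ with $0\le r<3^L$. Iterating the inequality $q$ times starting from $\Sigma_y^L(0)=0$ gives $\Sigma_y^L(q3^L)\ge q$. Since $\Sigma_y^L$ is nondecreasing in $n$, we get $\Sigma_y^L(n)\ge q=\lfloor n/3^L\rfloor$.
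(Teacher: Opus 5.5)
Your proof is correct and takes the same approach as the paper. The base case uses the fact that elementary blocks have length at most $3$. The inductive step, which the paper leaves to the reader, you carry out via the commutation formula by taking the first three visits $j_1<j_2<j_3$ to $B_Y^L$ in the window and applying the base case to $\Delta^L y$ at the shifts $N+1,N+2,N+3$; equivalently, one can combine the identity $\Sigma_y^{L+1}(n)=\Sigma^1_{\Delta^L y}\bigl(\Sigma_y^L(n)\bigr)$ from the proof of Proposition~\ref{prop:commutation} with the monotonicity of $\Sigma^1_{\Delta^L y}$.
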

        
        \begin{proof}
            For $L = 1$, the results comes from the fact that elementary blocks in $y \in Y$ have length at most 3. Therefore, for any $n \in \ZZ$, at least one of $S^{n+1}y$, $S^{n+2}y$, $S^{n+3}y$ is well-aligned, hence \[\Sigma_y(n+3) = \Sigma_y(n) + \ind{B_Y}(S^{n+1}y) + \ind{B_Y}(S^{n+2}y) + \ind{B_Y}(S^{n+3}y) \geq \Sigma_y(n) +1.\]
            
            Then the general result follows by induction on $L$, using the commutation formula from Proposition~\ref{prop:commutation} (details are left to the reader).
            
        \end{proof}
        
    \subsection{Type}
    
The arithmetic nature of the alphabet $\Sigma=\{1,3\}$ imposes a strict parity constraint on the positions of the blocks. Since all block lengths are odd, the starting indices of consecutive blocks must alternate in parity.

\begin{proposition}
	\label{prop:index}
	Let $x \in X$ be a smooth sequence over $\{1,3\}$. The starting indices of consecutive mono-symbol blocks in $x$ alternate between even and odd integers. 
	
	In particular, the starting indices of blocks of $1$s all have the same parity, while the starting indices of blocks of $3$s all have the opposite parity.
\end{proposition}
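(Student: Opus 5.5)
The plan is to argue directly from the block decomposition of $x$. Since $x \in X$ is non-stationary (it is differentiable), it splits uniquely into a bi-infinite sequence of maximal mono-symbol blocks. I will index these blocks by $j \in \mathbb{Z}$, write $s_j$ for the starting index of the $j$-th block, and choose the labelling so that $s_j < s_{j+1}$.

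The length of the $j$-th block is $\ell_j := s_{j+1} - s_j$, which is exactly a term of $\Delta x$. Because $x$ is differentiable over $\{1,3\}$, every $\ell_j$ lies in $\{1,3\}$ and is therefore odd. It follows that $s_{j+1} \equiv s_j + 1 \pmod 2$, so consecutive starting indices alternate in parity. By induction on $|j - j'|$ in both directions, this gives $s_j \equiv s_0 + j \pmod 2$ for all $j \in \mathbb{Z}$, which is the first assertion.

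For the second assertion, I use the fact that the alphabet $\{1,3\}$ has only two letters. Consecutive maximal blocks carry different symbols, so the symbols alternate as well: the $j$-th block consists of $1$s exactly when $j$ has a fixed parity, say $j \equiv \epsilon \pmod 2$. Combining this with $s_j \equiv s_0 + j$, every block of $1$s starts at an index of parity $s_0 + \epsilon$, and every block of $3$s starts at an index of the opposite parity.

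I do not expect a real obstacle. The only points needing care are the bi-infinite indexing and the existence of infinitely many blocks in both directions. The latter follows because $\Delta x$ is a bi-infinite sequence over $\{1,3\}$, so no block is infinite. Note that only one derivative is used, not full smoothness.
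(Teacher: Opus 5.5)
Your proposal is correct and follows the paper's proof: both rest on the fact that each block length $\ell_j = s_{j+1} - s_j$ is a term of $\Delta x \in \{1,3\}^{\mathbb{Z}}$, hence odd, so consecutive starting indices alternate in parity. You also write out two points the paper leaves implicit: the finiteness of every block, and the alternation of the symbols $1$ and $3$ that gives the second assertion.
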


\begin{proof}
	By definition of a smooth sequence, the derivative $\Delta(x)$ is also a sequence over $\{1, 3\}$. Consequently, the length of every mono-symbol block in $x$ is an odd integer.
	Let $i_k$ denote the starting index of the $k$-th block of $x$. The starting index of the next block is given by $i_{k+1} = i_k + \ell_k$, where $\ell_k \in \{1, 3\}$ is the length of the current block. Since $\ell_k$ is odd, it follows that $i_{k+1}$ and $i_k$ have distinct parities ($i_{k+1} \not\equiv i_k \pmod 2$).
\end{proof}

\begin{corollary}
	\label{cor:types}
	A sequence $y \in Y$ cannot simultaneously contain an elementary block in $\{\A\B\A, \D\C\D\}$ and an elementary block in $\{\B\A\B, \C\D\C\}$.
\end{corollary}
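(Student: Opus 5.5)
The plan is to translate elementary blocks back into runs in $x$ and apply the parity rigidity of Proposition~\ref{prop:index}. Fix $y=\rec(x)$ with $x\in X$. By Proposition~\ref{prop:index}, there is a parity $\varepsilon\in\{0,1\}$ such that every block of $1$s in $x$ starts at an index congruent to $\varepsilon$ modulo $2$, and every block of $3$s starts at an index congruent to $1-\varepsilon$. The idea is to show that each of the four length-3 elementary blocks forces a definite parity on the starting index of some run in $x$, and that the two families in the statement force incompatible parities.

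The key step is to compare, inside a single elementary block, the starting indices of its first and second runs. Consider the block $\A\B\A$, corresponding to the factor $131$ in $x$. If it starts at index $i$, then the middle $3$ (a block of $3$s) starts at $i+1$. Now consider $\C\D\C$, corresponding to $111333111$. If it starts at $j$, then the block of $3$s starts at $j+3$. Proposition~\ref{prop:index} alone does not distinguish these two cases, since both $i+1$ and $j+3$ are consistent with alternation. So the parity of block starting indices is not enough, and I need a finer invariant.

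This is where I would use the derivative. The elementary block $\A\B\A$ is a block of $1$s in $\Delta x$, namely $111$, while $\C\D\C$ is a block of $3$s in $\Delta x$. Since $\Delta x$ is itself smooth, Proposition~\ref{prop:index} applies to it. So in $\Delta x$, blocks of $1$s start at indices of one parity $\varepsilon'$, and blocks of $3$s start at the other parity.

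The indices of $\Delta x$ are the run indices of $x$, i.e.\ the indices of the letters of $y$ (up to the fixed alignment). Tracking the runs of $x$ in their natural order, the run with index $m$ in $y$ has a letter whose value is determined by the parity of $m$ (alternation). Therefore:
\begin{itemize}
\item an $\A\B\A$ block (a run of $1$s in $\Delta x$) starts at a $y$-index of parity $\varepsilon'$ with first letter $\A$, i.e.\ a $1$-run of $x$;
\item a $\B\A\B$ block starts at parity $\varepsilon'$ with first letter $\B$, a $3$-run;
\item a $\C\D\C$ block (a run of $3$s in $\Delta x$) starts at parity $1-\varepsilon'$ with first letter $\C$, a $1$-run;
\item a $\D\C\D$ block starts at parity $1-\varepsilon'$ with first letter $\D$, a $3$-run.
\end{itemize}
In $y$, the letters at even positions all encode runs of one symbol, and those at odd positions runs of the other. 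This holds because $x$ alternates between runs of $1$s and runs of $3$s (Lemma~\ref{lemma:P7}, Definition~\ref{def:alternating}). Hence the pair (parity of the $y$-index, symbol of the first run) is determined by a single global bit $\delta$, which ties $1$-runs to one parity of $y$-index.

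Combining the two constraints, blocks starting with a $1$-run at parity $\varepsilon'$ ($\A\B\A$) and blocks starting with a $3$-run at parity $1-\varepsilon'$ ($\D\C\D$) are simultaneously possible exactly when $\delta$ matches one way. The blocks $\B\A\B$ and $\C\D\C$ require the opposite value of $\delta$. This yields the dichotomy.

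The main obstacle is doing this index bookkeeping carefully: establishing that $y$-indices and $\Delta x$-indices coincide up to a common shift, so that the two parity bits $\delta$ and $\varepsilon'$ are well-defined globally for the given $y$.
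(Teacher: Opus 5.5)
Your argument is correct and is essentially the paper's parity argument. One global bit records which parity of $y$-indices carries the runs of $1$s of $x$ (the paper's $p$, your $\delta$), a second bit comes from the next derivative level, and the two families in the statement force opposite relations between these bits. The only difference is packaging: you apply Proposition~\ref{prop:index} directly to $\Delta x$, whose indices coincide exactly with those of $y$, so you do not need the paper's step of passing to $\Delta y$ and transferring parity differences through the odd lengths of elementary blocks, and you treat all four pairs at once (your opening paragraph with $\varepsilon$ is not needed).
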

    
\begin{proof}
	Let $y = (y_j)_{j \in \ZZ} \in Y$ be the recoding of a smooth sequence $x \in X$. By construction, the sequence $y$ alternates between symbols encoding runs of $1$s (the set $\{\A, \C\}$) and symbols encoding runs of $3$s (the set $\{\B, \D\}$). 
	
	According to Proposition \ref{prop:index}, the starting indices of the mono-symbol blocks in $x$ must alternate in parity. Since each index $j$ of the sequence $y$ refers to the $j$-th block of $x$, it follows that there exists a fixed parity $p \in \{0, 1\}$ (depending on $y$) such that:
	\begin{equation}
	\label{eq:parity}
	\begin{aligned}
		y_j \in \{\A, \C\} &\iff j \equiv p \pmod 2, \\
		y_j \in \{\B, \D\} &\iff j \not\equiv p \pmod 2.
	\end{aligned}
	\end{equation}
	
	Assume, for the sake of contradiction, that $y$ contains both patterns $\A\B\A$ and $\B\A\B$:
	\begin{itemize}
		\item Let some occurrence of $\A\B\A$ begin at index $i_1$. Since $y_{i_1} = \A \in \{\A, \C\}$, we must have $i_1 \equiv p \pmod 2$.
		\item Let some occurrence of $\B\A\B$ begin at index $i_2$. Since $y_{i_2} = \B \in \{\B, \D\}$, we must have $i_2 \not\equiv p \pmod 2$.
	\end{itemize}
	Thus, $i_1$ and $i_2$ necessarily have opposite parities.
	Now recall the rewriting rules for the derivative operator $\Delta$ on $Y$. Both three-letter elementary blocks are mapped to a single symbol representing a block of length 3:
	\[ \Delta(\A\B\A) = \C \quad \text{and} \quad \Delta(\B\A\B) = \C. \]
	Therefore the derived sequence $\Delta y$ contains the symbol $\C$ at two positions $k_1$ and $k_2$, corresponding to $\A\B\A$ in $y$ at position $i_1$ and $\B\A\B$ at position $i_2$. Since elementary blocks always have odd lengths, we must have $i_1 - i_2 \equiv k_1 - k_2 \pmod 2$, and it follows that $k_1$ and $k_2$ also have opposite parities.  
	But this contradicts~\eqref{eq:parity} for $\Delta y$, which proves that the elementary blocks $\A\B\A$ and $\B\A\B$ cannot coexist in $y$. 
	
	Similar arguments prove the impossibility of seeing simultaneously in $y$ the other pairs of elementary blocks indicated in the statement of the corollary.
\end{proof}   
    
This internal consistency allows us to classify smooth sequences into two distinct categories based on their local structure.

\begin{definition}
  	Let $x \in X$ be a smooth sequence and $y = \rec(x)$ its recoding. We say that $x$ and $y$ are of \emph{type~0} if  $y$ contains an elementary block in $\{\A\B\A, \D\C\D\}$. Conversely, we say that $x$ and $y$ are of \emph{type~1} if $y$ contains an elementary block in $\{\B\A\B, \C\D\C\}$. 
  	
  	We denote the type of $x$ and $y$ by $\type(x) = \type(y) \in \{0, 1\}$. The sequence of types of all iterated derivatives is denoted by:
  	\[ \Types(x) = \Types(y) := \bigl(\type\left(\Delta^n x\right)\bigr)_{n \in \mathbb{N}} \in \{0, 1\}^\mathbb{N}. \]
\end{definition}

Note that this classification is exhaustive for smooth sequences over $\{1, 3\}$. Indeed, any recoding $y \in Y$ must contain at least one of the four patterns $\{\A\B\A, \B\A\B, \D\C\D, \C\D\C\}$. If it did not, the derived sequence $\Delta y$ would consist exclusively of the letters $\A$ and $\B$. In terms of the original sequence, this would mean that $\Delta x$ is composed exclusively of blocks of length $1$. Consequently, the second derivative $\Delta^2 x$ would be the constant sequence $1^\infty$, which is not smooth over the alphabet $\{1, 3\}$. Thus, the type is well-defined for every $x \in X$, and Corollary~\ref{cor:types} ensures that it is unique.

\begin{remark}
\label{rem:type_and_parity}
An equivalent way to define the type of $x \in X$, which refers to Proposition~\ref{prop:index}, is the following: $\type(x) = 0$ if the starting indices of blocks of 1s have the same parity in $x$ and $\Delta x$, and $\type(x) = 1$ otherwise. When $\type(x) = 0$, any block $111$ in $\Delta x$ expands to 131 in $x$, and any block 333  in $\Delta x$ expands to $333\,111\,333$ in $x$. When $\type(x) = 1$, any block $111$ in $\Delta x$ expands to 313 in $x$, and any block 333  in $\Delta x$ expands to $111\,333\,111$ in $x$.
\end{remark}

\begin{definition}
      For a sequence of types $\tau \in \{0, 1\}^\mathbb{N}$, we define $X_\tau$ as the set of smooth bi-infinite sequences whose type sequence is exactly $\tau$:
  	\[ X_\tau := \{ x \in X \mid \Types(x) = \tau \}. \]
  	The set of corresponding recodings is denoted by 
  	\[ Y_\tau := \rec(X_\tau) = \{ y \in Y \mid \Types(y) = \tau \}. \]
\end{definition}

\subsection{Substitutive structure of $Y_\tau$}
\label{sec:substitutions}

The classification of smooth sequences into two distinct types provides a natural framework for inverting the derivative operator $\Delta$. Specifically, each type $t \in \{0, 1\}$ allows for the construction of a substitution $\varphi_t$ that reconstructs a recoded sequence from its derivative, up to a bounded shift. 

We define the substitutions $\varphi_0$ and $\varphi_1$ acting on the alphabet $\mathcal{A} = \{\A, \B, \C, \D\}$ as follows:
\[
\varphi_0:\begin{cases}\A \mapsto \A \\ \B \mapsto \D \\ \C \mapsto \A\B\A \\ \D \mapsto \D\C\D \end{cases}     
\qquad      
\varphi_1:\begin{cases}\A \mapsto \B \\ \B \mapsto \C \\ \C \mapsto \B\A\B \\ \D \mapsto \C\D\C \end{cases}.     
\]

Let us consider the action of these substitutions on finite factors whose domain contains the origin. For this purpose,  let $y_{[\![m,n]\!]} = y_m \dots y_{-1} y_0 y_1 \dots y_n$ be a finite word over $\{\A,\B,\C,\D\}$ defined on an integer interval $[\![m,n]\!]$ with $m \le 0 \le n$. Applying $\varphi_t$ on every letter yields the concatenated sequence of blocks $\varphi_t(y_m) \dots \varphi_t(y_0) \dots \varphi_t(y_n)$. To properly index the resulting sequence, we adopt the convention that the block $\varphi_t(y_0)$ begins exactly at index $0$. The positions of all other blocks are then uniquely determined by expanding outwards to the left and right. This ensures that $\varphi_0$ and $\varphi_1$ act as morphisms under concatenation. 

Taking the limits $m \to -\infty$ and $n \to +\infty$, this construction naturally extends to any bi-infinite sequence $y \in \{\A,\B,\C,\D\}^\ZZ$:
\[
\varphi_t (y) = \lim_{n \to \infty} \varphi_t \left( y_{[\![-n,n]\!]}  \right)  
\]

By design, the image of any bi-infinite sequence $y \in Y$ under these substitutions is always well-aligned at the origin. This alignment convention is illustrated in the following commutative diagram:

\[
\begin{tikzcd}[column sep=-3.5em, row sep=2em]
	& y = \cdots \D \underline{\C} \D \A \cdots & \\
	\varphi_0(y) = \cdots \D\C\D \, \underline{\A}\B\A \, \D\C\D \, \A \cdots && \varphi_1(y) = \cdots \C\D\C \, \underline{\B}\A\B \, \C\D\C \, \B \cdots \\
	& y = \cdots \D \underline{\C} \D \A \cdots
	\arrow["\varphi_0"', from=1-2, to=2-1, bend right=15]
	\arrow["\varphi_1", from=1-2, to=2-3, bend left=15]
	\arrow["\Delta"', from=2-1, to=3-2, bend right=15]
	\arrow["\Delta", from=2-3, to=3-2, bend left=15]
\end{tikzcd}
\]

As illustrated above, these two substitutions act as right inverses of the derivative operator $\Delta$: for any $y \in Y$ and any $t \in \{0,1\}$, we have $\Delta(\varphi_t(y)) = y$. Conversely, applying the substitution  $\varphi_{\type(y)}$ to the derivative $\Delta y$ recovers the original sequence $y$ only up to a bounded shift.

To illustrate the necessity of this positional shift, consider a sequence $y \in \{\A,\B,\C,\D\}^\ZZ$ of type 0 where the origin (underlined) falls in the middle of an $\A\B\A$ block, for instance $y =  \cdots \D\C\D \, \A \underline{\B} \A \, \D\C\D \cdots $. Then, 
\[
\varphi_0(\Delta y) = \varphi_0( \cdots \D \underline{\C} \D \cdots ) = \cdots \D\C\D \, \underline{\A} \B \A \, \D\C\D \cdots = S^{-1}(y)
\]
While the sequence of symbols in $\varphi_0(\Delta y)$ perfectly matches $y$, the origin is misaligned because our convention forces the substituted block $\A\B\A$ to start exactly at index $0$. To recover the original sequence $y$, we must shift the result one position to the left.

In general, as substituted blocks have length 1 or 3, the positional shift to recover the original sequence from its derivative can be of 0, 1 or 2 places, and we get the following: 
\begin{equation}
 \label{eq:recover_from_first_derivative}
 \forall y \in Y,\ \exists i  \in \{0, 1,2\}:\ y = S^i\bigl(\varphi_{\type(y)}(\Delta y)\bigr). 
\end{equation}

Now, we want to generalize the above formula to see how $y \in Y$ can be recovered from $\Delta^L y$ for $L \ge 1$. To establish an induction from~\eqref{eq:recover_from_first_derivative}, we need to understand how, for $t \in \{0,1\}$, the substitution $\varphi_t$ interacts with the shift map. It is straightforward to check that for $y \in \{\A,\B,\C,\D\}^\ZZ$, $\varphi_t (Sy) = S^j \varphi_t(y)$, where $j$ is the length of $\varphi_t(y_0)$, which belongs to $\{1, 3\}$. An easy induction then gives:
\begin{equation}
 \label{eq:phi_S}
 \forall y \in  \{\A,\B,\C,\D\}^\ZZ,\ \forall i \ge 1,\ \exists j \in \{i,\ldots,3i\}:\ \varphi_t(S^i y) = S^j \varphi_t(y).
\end{equation}
This allows us to get the following result.

\begin{proposition}
\label{Prop prim}
For all $\tau \in \{0,1\}^\NN$, for all $y \in Y_\tau$, for all $L \ge 1$, there exists $i \in \{0,\dots,3^L-1\}$ such that 
\begin{equation}
    \label{eq:Lsubstitutions}
    y = S^i \varphi_{\tau_0} \circ \varphi_{\tau_1} \circ \cdots \circ \varphi_{\tau_{L-1}} \bigl( \Delta^L y \bigr).
\end{equation}
\end{proposition}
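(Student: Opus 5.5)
We argue by induction on $L$, with \eqref{eq:recover_from_first_derivative} as the base case and the shift rule \eqref{eq:phi_S} to push shifts outward through the substitutions. Write $\Phi_L := \varphi_{\tau_0}\circ\cdots\circ\varphi_{\tau_{L-1}}$.

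For $L=1$, we have $\type(y)=\tau_0$ since $y\in Y_\tau$. Then \eqref{eq:recover_from_first_derivative} gives directly $y = S^{i}\varphi_{\tau_0}(\Delta y)$ with $i\in\{0,1,2\}=\{0,\dots,3^1-1\}$.

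For the inductive step, assume the statement for $L$ and all $\tau$, and take $y\in Y_\tau$. The sequence $z := \Delta y$ lies in $Y_{\sigma\tau}$, where $\sigma\tau = (\tau_1,\tau_2,\dots)$ is the shifted type sequence, because $\Types(\Delta y)_n = \type(\Delta^{n+1}y)=\tau_{n+1}$. Applying the induction hypothesis to $z$ yields $i'\in\{0,\dots,3^L-1\}$ with
\[ \Delta y = S^{i'}\varphi_{\tau_1}\circ\cdots\circ\varphi_{\tau_L}(\Delta^{L+1}y). \]
The base case gives $y = S^{i_0}\varphi_{\tau_0}(\Delta y)$ with $i_0\in\{0,1,2\}$. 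Substituting, we get $y = S^{i_0}\varphi_{\tau_0}\bigl(S^{i'} w\bigr)$, where $w=\varphi_{\tau_1}\circ\cdots\circ\varphi_{\tau_L}(\Delta^{L+1}y)$. By \eqref{eq:phi_S}, $\varphi_{\tau_0}(S^{i'}w) = S^{j}\varphi_{\tau_0}(w)$ for some $j\in\{i',\dots,3i'\}$; the case $i'=0$ is trivial with $j=0$. Hence $y = S^{i_0+j}\Phi_{L+1}(\Delta^{L+1}y)$.

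The only delicate point is the bound on the exponent. The crude estimate gives $i_0+j\le 2+3(3^L-1)=3^{L+1}-1$, which is exactly the required range $\{0,\dots,3^{L+1}-1\}$, and nonnegativity is clear. So the induction closes, and the main care lies in two places. First, one must note that the type sequence of $\Delta y$ is the shift of $\tau$, so that the induction hypothesis is applied over all $\tau$ simultaneously. Second, one must check that \eqref{eq:phi_S} applies to arbitrary sequences in $\{\A,\B,\C,\D\}^\ZZ$, which it does as stated, so there is no need to know that $w$ lies in $Y$.
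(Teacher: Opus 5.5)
Your proof is correct and is essentially the paper's argument: induction on $L$ with \eqref{eq:recover_from_first_derivative} as the base case, the induction hypothesis applied to $\Delta y \in Y_{S\tau}$, and \eqref{eq:phi_S} used to move the inner shift outside $\varphi_{\tau_0}$, which gives the exponent bound $i_0 + j \le 2 + 3(3^L-1) = 3^{L+1}-1$. You also treat the case $i'=0$ explicitly, since \eqref{eq:phi_S} as stated requires $i \ge 1$; the paper leaves this implicit.
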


\begin{proof}
 The proof goes by induction on $L$. For $L = 1$, this is just~\eqref{eq:recover_from_first_derivative}. Now, assuming that the result holds up to $L - 1$ for some $L \ge 2$, we apply this induction hypothesis to $\Delta y \in Y_{S\tau}$, getting some $i_1 \in \{0,\dots,3^{L-1} - 1\}$ such that 
\[
  \Delta y = S^{i_1} \varphi_{\tau_1} \circ \varphi_{\tau_2} \circ \cdots \circ \varphi_{\tau_{L-1}} \bigl( \Delta^L y \bigr).
\]
Then we also apply~\eqref{eq:recover_from_first_derivative} to $y$, which gives $i_0 \in \{0,1,2\}$ such that 
\[
  y = S^{i_0} \varphi_{\tau_0} (\Delta y).
\]
Now using~\eqref{eq:phi_S}, we get $j_0 \in \{i_1,\ldots,3i_1\}$ such that 
\[
  \varphi_{\tau_0} (\Delta y) = S^{j_1} \varphi_{\tau_0} \circ \varphi_{\tau_1} \circ \cdots \circ \varphi_{\tau_{L-1}} \bigl( \Delta^L y \bigr),
\]
which yields~\eqref{eq:Lsubstitutions} with $i := i_0 + j_1 \in \{0,\ldots,3^L-1\}$.
\end{proof}

At this point, it is tempting to believe that $Y_\tau$ coincides with the set of all bi-infinite sequences $y \in \mathcal{A}^\ZZ$ satisfying: there exists an infinite family $(y^L)_{L\in\NN}$ of bi-infinite sequences in $\mathcal{A}^\ZZ$ and an infinite family $(i_L)_{L \in \NN}$ of integers such that 
\begin{itemize}
 \item $y = y^0$,
 \item for all $L \ge 0$, $y^L = S^{i_L} \varphi_{\tau_L} \bigl(y^{L+1}\bigr)$,
\end{itemize}
which would amounts to describing $Y_\tau$ as the S-adic subshift associated to the directive sequence of substitutions $\bigl(\varphi_{\tau_L}\bigr)_{L\ge 0}$. (See~\cite{BertheDelecroix2014} for details about S-adic subshifts.) Yet this is not exactly true, at least when $\tau$ ends up with infinitely many 0s, since the sequence 
\[ \dots \A \A \A \underline{\A} \A \A \A  \dots \]
is clearly invariant by $\varphi_0$, but it does not belong to $Y$. To provide a uniform characterization of sequences in $Y_\tau$, we will resort also to the notion of alternating sequences introduced in Definition~\ref{def:alternating}.

\begin{proposition}
 \label{prop:elements_of_Ytau}
 Let $\tau \in \{0,1\}^\NN$ and let $(y^L)_{L \in \NN}$ be an infinite family of alternating sequences in $\mathcal{A}^\ZZ$ satisfying:
 \begin{equation}
 \label{eq:yL}
  \forall L \in \NN,\ \exists i_L \in \ZZ:\ y_L = S^{i_L} \varphi_{\tau_L}  \bigl(y^{L+1}\bigr).
 \end{equation}
 Then $y^0 \in Y_\tau$.
\end{proposition}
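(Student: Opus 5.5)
The plan is to decode each $y^L$ back into a sequence of $1$s and $3$s, and to check that derivation on the decoded sequences corresponds, up to shifts, to passing from $y^L$ to $y^{L+1}$. Smoothness of the decoded $y^0$ and the identification of its type sequence with $\tau$ then both follow from the structure of the substitutions $\varphi_0,\varphi_1$.

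\emph{Step 1: decoding.} For an alternating sequence $z\in\mathcal{A}^\ZZ$, let $D(z)\in\{1,3\}^\ZZ$ be obtained by replacing each letter by its word: $\A\mapsto 1$, $\B\mapsto 3$, $\C\mapsto 111$, $\D\mapsto 333$. The image of $z_0$ starts at index $0$. Since $z$ is alternating, a letter of $\{\A,\C\}$ (a run of $1$s) is always followed by a letter of $\{\B,\D\}$ (a run of $3$s) and conversely. So the runs of $D(z)$ are exactly the images of the letters of $z$. Hence $D(z)$ is non-stationary, it is differentiable with $\Delta D(z)\in\{1,3\}^\ZZ$, and $\rec(D(z))=z$. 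Set $x^L:=D(y^L)$.

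\emph{Step 2: the key identity $\Delta x^L = S^{j_L} x^{L+1}$ for some $j_L\in\ZZ$.} Write $t=\tau_L$. By \eqref{eq:yL}, up to a shift the runs of $x^L$ are read off from $\varphi_t(y^{L+1})$. Each letter $\A,\B$ of $\varphi_t(y^{L+1})$ contributes a run of length $1$, and each $\C,\D$ a run of length $3$. Inspecting the definitions of $\varphi_0$ and $\varphi_1$, the letter $z_k$ of $z=y^{L+1}$ contributes the following successive run lengths:
\begin{itemize}
\item $1$ if $z_k=\A$;
\item $3$ if $z_k=\B$;
\item $111$ if $z_k=\C$;
\item $333$ if $z_k=\D$.
\end{itemize}
This is exactly $D(z_k)$, for both $t=0$ and $t=1$. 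The derivative is just the concatenation of run lengths, with no merging, because Step 1 shows the runs of $x^L$ are letter-by-letter. So $\Delta x^L$ equals $D(y^{L+1})$ up to a shift.

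\emph{Step 3: smoothness.} Iterating Step 2, $\Delta^k x^0$ is a shift of $x^k$ for every $k$. Each $x^k$ is differentiable over $\{1,3\}$ by Step 1, so $x^0$ is infinitely differentiable, i.e.\ $x^0\in X$, and $y^0=\rec(x^0)\in Y$. Moreover $\rec(\Delta^k x^0)$ is a shift of $y^k$.

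\emph{Step 4: the type sequence.} Since the type is clearly shift-invariant (it depends only on which elementary blocks occur), it remains to show $\type(y^L)=\tau_L$ for every $L$. For this I need that $y^{L+1}$ contains a $\C$ or a $\D$. This follows from $y^{L+1}=S^{i}\varphi_{\tau_{L+1}}(y^{L+2})$ and the alternation of $y^{L+2}$:
\begin{itemize}
\item $y^{L+2}$ contains a letter of $\{\B,\D\}$, and $\varphi_0$ maps both $\B$ and $\D$ to words containing $\D$;
\item $y^{L+2}$ contains a letter of $\{\A,\C\}$; $\varphi_1(\C)$ contains $\C$ directly, while $\varphi_1(\A)=\B$ is followed by the image of a $\B$ or a $\D$, namely $\C$ or $\C\D\C$.
\end{itemize}
Then $y^L$, a shift of $\varphi_{\tau_L}(y^{L+1})$, contains the three-letter word $\varphi_{\tau_L}(\C)$ or $\varphi_{\tau_L}(\D)$. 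For $\tau_L=0$ these are $\A\B\A$ and $\D\C\D$; for $\tau_L=1$ they are $\B\A\B$ and $\C\D\C$. These occurrences are genuine elementary blocks of $y^L$, since Step 2 shows that the elementary blocks of $y^L$ are exactly the images $\varphi_{\tau_L}(z_k)$. Corollary~\ref{cor:types} then gives $\type(y^L)=\tau_L$. Applied to $\Delta^L y^0$, a shift of $y^L$, this yields $\Types(y^0)=\tau$.

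The main obstacle is the bookkeeping in Step 2 and Step 4: showing that the elementary-block decomposition of $y^L$ coincides with the decomposition into $\varphi_{\tau_L}$-images. Concretely, one must check that consecutive images never merge into a longer mono-symbol block of $\Delta x^L$. This is where the alternation of $y^{L+1}$ is used: a $\C$ or $\D$ of $y^{L+1}$ (a run of $3$ in $\Delta x^L$) is always adjacent only to letters encoding runs of the opposite symbol. It is also exactly what excludes the counterexample $\cdots\A\A\A\cdots$.
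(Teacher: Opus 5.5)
Your proposal is correct and follows the same route as the paper. You decode each $y^L$ into a well-aligned differentiable $x^L$, and check that $\varphi_0$ and $\varphi_1$ both send each letter to a word whose sequence of run lengths is exactly the decoding of that letter, so that $\Delta x^L = S^{j_L}x^{L+1}$. You then deduce smoothness and read off the types. Your Step~4 is more explicit than the paper's one-line justification. Getting $\Delta^L y^0$ as a shift of $y^L$ directly from Step~2 is also cleaner than the paper's appeal to Proposition~\ref{Prop prim}.

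There is one slip to fix in Step~4: $\varphi_1(\C)=\B\A\B$ does not contain $\C$. The repair is immediate. In both cases use a letter of $\{\B,\D\}$ in $y^{L+2}$, since $\varphi_0(\B)=\D$, $\varphi_0(\D)=\D\C\D$, $\varphi_1(\B)=\C$ and $\varphi_1(\D)=\C\D\C$ all contain $\C$ or $\D$. Equivalently, a $\C$ of $y^{L+2}$ is followed by a $\B$ or a $\D$, exactly as in your $\A$ case.
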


\begin{proof}
 As every $y^L$ is alternating, it is straightforward to check that for each $L$ there exists a unique well-aligned and differentiable bi-infinite sequence $x_L \in \{1, 3\}^\ZZ$ such that $y_L = \rec (x_L)$. 
 Moreover, let us assume for example that for some $L$ we have $\tau_L = 0$. Then by construction of $\varphi_{0}$, $y_L = S^{i_L} \varphi_0 \bigl(y^{L+1}\bigr)$ is a concatenation of elementary blocks $\A$, $\D$, $\A\B\A$, $\D\C\D$. To these elementary blocks respectively correspond in $x_L$ the factors $1$, $333$, $131$, $333111333$. By derivation, these factors yield respectively in $\Delta x_L$ the single-symbol blocks $1$, $3$, $111$, $333$, whose recoding give the preimages $\A$, $\B$, $\C$, $\D$ by $\varphi_0$ of the elementary blocks in $y^L$. Therefore the symbols in $\rec(\Delta x_L)$ appear in the same order as those in $y^{L+1}$, up to some possible finite shift.  We finally get that $\Delta x_L = S^{j_L} x_{L+1}$ for some $j_L \in \ZZ$. A similar argument gives the same conclusion when $\tau_L = 1$, and this is enough to conclude that $x^0$ is smooth, so $y^0 \in Y$. Then using~\eqref{eq:yL}, we get that for all $L$, $\type(y_L) = \tau_L$. But by Proposition~\ref{Prop prim} we also have $\Delta^L y = S^{k_L} y^L$ for some $k_L \in \ZZ$, therefore $\type(\Delta^L y) = \type(y^L) = \tau_L$, and finally $\Types(y^0) = \tau$.  
\end{proof}

With the help of the above proposition, we are now able to prove the following important result.

\begin{proposition}
    For any $\tau \in \{0,1\}^\ZZ$, $Y_\tau \neq \emptyset$.
\end{proposition}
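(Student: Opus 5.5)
The plan is to build a family $(y^L)_{L\in\NN}$ of alternating sequences satisfying \eqref{eq:yL} and then apply Proposition~\ref{prop:elements_of_Ytau}. The statement has $\tau \in \{0,1\}^\ZZ$, but only $\tau\in\{0,1\}^\NN$ matters. The family comes from a compactness argument. For each $N\ge 1$, choose an alternating sequence $z^N$ and define backwards $y^{N,N} := z^N$ and $y^{N,L} := \varphi_{\tau_L}(y^{N,L+1})$ for $L<N$. Then extract a limit along a diagonal subsequence.

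The first step is to check that each $\varphi_t$ maps alternating sequences to alternating sequences. For $\varphi_0$, the images of the letters are $\A$, $\D$, $\A\B\A$, $\D\C\D$. Each image starts and ends with a letter of $\{\A,\C\}$ if the preimage letter is in $\{\A,\C\}$, and with a letter of $\{\B,\D\}$ if it is in $\{\B,\D\}$. Each image is also internally alternating. So when the input alternates between the classes $\{\A,\C\}$ and $\{\B,\D\}$, so does the output. For $\varphi_1$ the images $\B$, $\C$, $\B\A\B$, $\C\D\C$ swap the two classes consistently, so alternation is again preserved. Consequently every $y^{N,L}$ with $0\le L\le N$ is alternating, and $y^{N,L} = \varphi_{\tau_L}(y^{N,L+1})$ holds exactly, with $i_L=0$, for $L<N$.

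The second step is the limit. The product space $(\mathcal{A}^\ZZ)^\NN$ is compact, and we set $y^{N,L}$ to be an arbitrary alternating sequence for $L>N$. So some subsequence $N_k$ has $y^{N_k,L}\to y^L$ for every $L$. Alternation is a closed condition, since it forbids certain two-letter patterns, so each $y^L$ is alternating. The substitutions $\varphi_t$, with the paper's origin convention, are continuous on $\mathcal{A}^\ZZ$: the letters of $y$ on $[\![-n,n]\!]$ determine $\varphi_t(y)$ on at least $[\![-n,n]\!]$. Passing to the limit in $y^{N_k,L}=\varphi_{\tau_L}(y^{N_k,L+1})$, valid once $N_k>L$, therefore gives $y^L=\varphi_{\tau_L}(y^{L+1})$ for all $L$. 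This is \eqref{eq:yL} with $i_L=0$. Proposition~\ref{prop:elements_of_Ytau} then yields $y^0\in Y_\tau$, so $Y_\tau\ne\emptyset$.

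The main point to watch is whether Proposition~\ref{prop:elements_of_Ytau} really applies to arbitrary alternating inputs. The worry is degenerate limits such as sequences on $\{\A,\B\}$ only, or $\varphi_0$-invariant sequences like $\A^\ZZ$, which is excluded here because it is not alternating. The proposition takes only alternation as hypothesis, and it handles the smoothness and the existence of type-defining blocks itself, so I rely on it as stated. If extra care is wanted, I would choose $z^N$ to contain the letter $\C$ or $\D$ at the origin, for example the periodic sequence $(\C\D)^\ZZ$. I would check that $\varphi_t$ maps letters $\C,\D$ to words containing a length-3 elementary block. This keeps a three-letter elementary block near the origin at every level of the limit and ensures the type is witnessed. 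The only routine verification left is the continuity of $\varphi_t$, which follows from all image lengths being at least $1$.
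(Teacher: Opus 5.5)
Your proof is correct, and it takes a genuinely different route from the paper's.

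\textbf{The paper's route.} It builds an explicit nested array of finite words. At stage $n$ it places a single $\underline{\D}$ at level $n$. It extends this to $\C\underline{\D}\C$, $\underline{\D}\C\D$ or $\D\C\underline{\D}$ at level $n-1$, alternating between the last two options when $\tau_n=0$ so that the words grow on both sides. It then propagates downward with shifts $i_L\in\{0,1,2\}$. The limits exist simply because each word extends the previous one. So no compactness is needed, and one gets a concrete element of $Y_\tau$ with the letter $\D$ at the origin at every level.

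\textbf{Your route.} You start from bi-infinite alternating sequences, so two-sided growth is automatic, and you take all shifts $i_L=0$. You pass to the limit using three facts, each of which you justify correctly:
\begin{itemize}
\item compactness of $(\mathcal{A}^\ZZ)^\NN$;
\item closedness of the alternating condition;
\item continuity of $\varphi_t$ under the origin convention.
\end{itemize}
Your check that $\varphi_0$ preserves, and $\varphi_1$ consistently swaps, the classes $\{\A,\C\}$ and $\{\B,\D\}$ is also correct.

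\textbf{What each buys.} Your argument is shorter and avoids the shift bookkeeping, at the cost of a non-constructive subsequence extraction. Your limit point is infinitely well-aligned, i.e.\ it lies in $\bigcap_L B_Y^L$. So it is one of the sequences that the paper later identifies with unilateral smooth sequences. The paper's construction does not need this and gives an explicit point.

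\textbf{Your ``extra care'' is unnecessary.} Each $y^{L+1}$ is itself the $\varphi_{\tau_{L+1}}$-image of an alternating sequence. Every alternating sequence contains a letter of the class $\{\B,\D\}$, and $\varphi_0$ sends such letters to $\D$ or $\D\C\D$, while $\varphi_1$ sends them to $\C$ or $\C\D\C$. Hence $y^{L+1}$ contains a $\D$ (if $\tau_{L+1}=0$) or a $\C$ (if $\tau_{L+1}=1$). Therefore $y^L=\varphi_{\tau_L}(y^{L+1})$ automatically contains a three-letter elementary block witnessing type $\tau_L$. This is exactly why Proposition~\ref{prop:elements_of_Ytau} holds for arbitrary alternating families, so no special choice of $z^N$ is needed.
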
\label{prop:nonempty}

\begin{proof}
For a fixed $\tau \in \{0,1\}^\ZZ$, we will explain how to construct a specific element of $Y_\tau$, which will justify that $Y_\tau$ is never empty. We construct by induction on $n \in \NN$ an array $(w_n^L)_{0 \le L \le n}$ of finite alternating words over the alphabet $\A$, indexed by subintervals of $\ZZ$ containing 0, with the following properties:
\begin{itemize}
 \item $\forall 0 \le L \le n$, the symbol of index 0 in $w_n^L$ is always a $\D$,
 \item $\forall n$, $w_n^n$ is reduced to the single symbol $\underline{\D}$ (as usual, the underlining is just here to indicate the zero-position),
 \item $\forall 0 \le L < n$, $w_n^L = S^{i_L} \varphi_{\tau_L} \left(w_n^{L+1}\right)$ for some $i_L \in \{0,1,2\}$\quad(*),
 \item $\forall 0 \le L \le n$, $w_{n+1}^L$ extends $w_n^L$,
 \item $\forall L \ge 0$ $w_n^L$ converge as $n \to \infty$ to an alternating bi-infinite sequence $y^L \in \A^\ZZ$.
\end{itemize}
\begin{figure}[htp]
 \centering
 \includegraphics{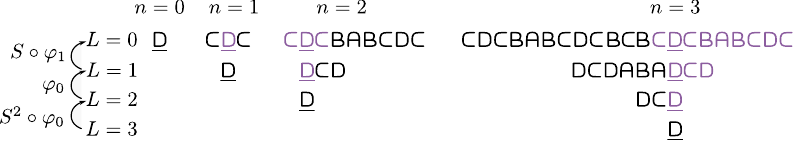}
 \caption{First 4 steps in the construction of the array, for $\tau$ starting with $(1,0,0,\dots)$}
 \label{fig:extensions}
\end{figure}

We start the construction by simply setting $w_0^0 := \underline{\D}$. Now, assume that the array is already constructed up to some $n \ge 0$, and that $i_0,\dots,i_{n-1}$ are already defined. Then we set $w_{n+1}^{n+1} := \underline{D}$. Next we construct $w_{n+1}^n$ extending $w_n^n = \underline{\D}$ with the following rule:
\begin{itemize}
 \item if $\tau_n = 1$, we set $w_{n+1}^n := \C \underline{\D} \C$ and $i_n := 1$,
 \item if $\tau_n = 0$, we have some choice: we can either set $w_{n+1}^n := \underline{\D} \C \D$ and $i_n := 0$ (option~1), or set $w_{n+1}^n := \D \C \underline{\D}$ and $i_n := 2$ (option~2).
\end{itemize}
Let us decide to adopt option~1 whenever $n$ is odd and option~2 whenever $n$ is even. In this way we make sure that the extension of $w_{n+1}^n$ infinitely often adds symbols to the left and to the right. Once we have $w_{n+1}^n$, we get the other words $w_{n+1}^L$ step by step simply applying the requirement~(*) (see Figure~\ref{fig:extensions}).

At the end, the alternating bi-infinite sequences $y^L$ that we get in the limit satisfy 
\[
 \forall L\ge 0,\ y^L = S^{i_L} \varphi_{\tau_L} y^{L+1}.
\]
By Proposition~\ref{Prop prim}, $y^0 \in Y_\tau$.
\end{proof}

\bigskip

The fact that $X_\tau$ is not empty either follows directly. As both $X_\tau$ and $Y_\tau$ are clearly closed and shift-invariant, they are subshifts over respectively the alphabets $\{1, 3\}$ and $\{\A, \B, \C, \D\}$. The next section is devoted to the proof of their unique ergodicity.

\section{Unique ergodicity}\label{sec:unique_ergo}

In all this section, we fix a sequence of types $\tau \in \{0,1\}^\NN$. We want first to prove the unique ergodicity of the subshift $Y_\tau$ by exploiting the substitutive structure highlighted in Section~\ref{sec:substitutions}. Then the unique ergodicity of $X_\tau$ will be deduced from that of $Y_\tau$. 

    \subsection{Unique ergodicity of $Y_\tau$}
    \label{sec:UE_Y}
    Establishing the unique ergodicity of $Y_\tau$ amounts to proving the existence of a probability measure $\nu_\tau$, necessarily $S$-invariant and supported on $Y_\tau$, such that every $y\in Y_\tau$ is generic for $\nu_\tau$, that is:
    \[
     \frac{1}{n} \sum_{j = 1}^{n} \delta_{S^j y} \tend{n}{\infty} \nu_\tau.
    \]
    By compacity of the space $\M_1(Y_\tau)$ 
    of shift-invariant probability measures on $Y_\tau$, equipped with the topology of weak convergence (as $Y_\tau$ itself is compact), it is enough to prove the following proposition.
    
    \begin{proposition}
    \label{prop:accumulation}
     Let $y\in Y_\tau$, and let $\nu$ be an accumulation point of the sequence of empirical measures on the orbit of $y$, that is, for some strictly increasing sequence $(n_k)$
     of integers, we have
     \begin{equation}
     \label{eq:weak_convergence}
     \frac{1}{n_k} \sum_{j = 1}^{n_k} \delta_{S^j y} \tend{k}{\infty} \nu.
    \end{equation} 
    Then for any finite word $w \in \mathcal{A}^*$, 
    \[ \nu([w]) = \lim_{k\to\infty} \frac{1}{n_k}\occu{y}{[1,n_k]}{w}\] 
    depends neither on $(n_k)$ nor on $y\in Y_\tau$.
    \end{proposition}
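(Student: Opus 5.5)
We prove the statement by induction on the length of $w$, and at each step we descend through the derivative. The key tool is the commutation formula (Proposition~\ref{prop:commutation}) combined with Proposition~\ref{Prop prim}. The point is that $y$ is, up to a bounded shift, the image of $\Delta^L y$ under $\psi_L := \varphi_{\tau_0}\circ\cdots\circ\varphi_{\tau_{L-1}}$. As a result, $y_{[1,n]}$ decomposes, except for at most two partial blocks at the ends, as a concatenation of the words $\psi_L(a)$, $a\in\mathcal{A}$, read along $(\Delta^L y)_{[1,\Sigma_y^L(n)]}$.

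Fix $w$ and $L$. We count the occurrences of $w$ in $y_{[1,n]}$ as follows:
\[
\occu{y}{[1,n]}{w} = \sum_{a\in\mathcal{A}} |\psi_L(a)|_w \cdot \occu{(\Delta^L y)}{[1,\Sigma_y^L(n)]}{a} + E,
\]
where $E$ counts the occurrences straddling block boundaries, together with the boundary effects. The term $E$ is at most $|w|\,\Sigma_y^L(n) + O(3^L)$.

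The decisive estimate is that boundary effects become negligible relative to $n$. For this we need $\Sigma_y^L(n)/n \to 0$ uniformly as $L\to\infty$, so we want an upper bound complementary to Lemma~\ref{lemma:Sigma_min}. Here we use Lemma~\ref{lemma:P7}. In $\Delta^{\ell}y$, the letters come in alternating pairs, and any $\C$ or $\D$ forms an elementary block of length $3$, or else neighbouring $\A\B$ patterns merge. We expect that every window of bounded length in any $y\in Y$ contains a 3-block (for example, since $\A\B\A\B$ is forbidden, the runs of $\A,\B$ have length at most $3$). From this we get a uniform contraction $\Sigma_y(n)\le \rho n + C$ with $\rho<1$, and iterating via the commutation formula gives $\Sigma_y^L(n)\le \rho^L n + C'$. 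Then $|\psi_L(a)|$ grows exponentially in $L$, and the relative error is $O(|w|\rho^L)$.

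With this decomposition in hand, the problem reduces to the letter frequencies of $\Delta^L y\in Y_{S^L\tau}$. It therefore suffices to show that the vector of letter frequencies of $\Delta^L y$ along $[1,\Sigma^L_y(n_k)]$ is asymptotically independent of $y$ and $(n_k)$, at least after multiplication by the matrix $M_L = M_{\tau_0}\cdots M_{\tau_{L-1}}$. Here $M_t$ denotes the incidence matrix of $\varphi_t$, so that the frequency vector of $y$ lies in $M_L\,\Delta_{\mathcal{A}}$ (normalized), where $\Delta_{\mathcal{A}}$ is the simplex. Combining this with the alternating constraint of Lemma~\ref{lemma:P7}, which forces the frequencies to satisfy $f_\A+f_\C = f_\B+f_\D = 1/2$ at every level, the task becomes showing that the nested projective images $M_L\bigl(\Delta_{\mathcal{A}}\cap\{\text{alternating}\}\bigr)$ shrink to a single point. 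This is a Birkhoff-contraction argument for the cones.

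For arbitrary $w$, the same nested decomposition gives the value of $\nu([w])$ as a limit of $\sum_a |\psi_L(a)|_w\, f^{(L)}_a \big/ \sum_a |\psi_L(a)|\, f^{(L)}_a$, which is then independent of $y$ and $(n_k)$.

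The main obstacle is the contraction step. The matrices $M_0$ and $M_1$ are not primitive: $\varphi_0$ fixes $\A$, and $\tau=0^\infty$ gives the non-alternating fixed point $\A^\ZZ$. So the products need not be positive, and contraction must instead come from restricting to the alternating slice. I would first compute $M_0$, $M_1$ and the products $M_0M_1$, $M_1M_0$, $M_0^2$, $M_1^2$ explicitly on the two-dimensional affine slice $f_\A+f_\C=f_\B+f_\D=1/2$. Then I would check that every product of length two (or three) strictly contracts the Hilbert metric of the relevant cone, with the case $\tau$ eventually $0^\infty$ handled separately if $M_0^k$ fails to contract there.
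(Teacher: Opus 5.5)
\paragraph{Verdict: genuine gap at the contraction step.}
Your reduction is sound and largely parallels the paper. The block decomposition along $(\Delta^L y)_{[1,\Sigma^L_y(n)]}$ is exactly \eqref{eq:decomp-3.10}. Restricting to the slice $f_{\A}+f_{\C}=f_{\B}+f_{\D}=1/2$ via Lemma~\ref{lemma:P7}, and following the projective action of $M_0,M_1$ there, is precisely what the paper's homographies $h_0,h_1$ encode in the coordinates $(a,b)$. Your uniform bound on $\Sigma^L_y$ is correct and can replace the decay part of Proposition~\ref{prop:BLY}:
\begin{itemize}
\item Runs of letters from $\{\A,\B\}$ in $\Delta y$ have length at most $3$, so any four consecutive elementary blocks of $y$ have total length at least $6$. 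This gives $\Sigma_y(n)\le \frac{2}{3}n+O(1)$.
\item The identity $\Sigma^{L+1}_y(n)=\Sigma^1_{\Delta^L y}(\Sigma^L_y(n))$, from the proof of Proposition~\ref{prop:commutation}, iterates this to $\Sigma^L_y(n)\le (2/3)^L n+O(1)$.
\end{itemize}
Your side remark that $|\psi_L(a)|$ grows exponentially is false (e.g.\ $\varphi_0^L(\A)=\A$), but you do not need it.

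\paragraph{The missing step.}
The gap is the step you yourself call the main obstacle: proving that the nested sets $h_{\tau_0}\circ\cdots\circ h_{\tau_{L-1}}(K)$ shrink to a point. This is the heart of the proof, and the route you sketch does not work.
\begin{itemize}
\item Since $\varphi_1$ produces $\D$ only from $\D$, $M_1$ preserves the face $\{f_{\D}=0\}$ of the cone of nonnegative alternating vectors. In the paper's coordinates, $h_1$ maps the side $\{b=1/2\}$ of $K$ into itself.
\item Hence every power $M_1^k$ maps the cone onto a subcone touching that face. Its projective diameter is infinite, so Birkhoff's theorem gives no contraction for $M_1^2$, $M_1^3$, or any $M_1^k$. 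Your planned length-two/three check therefore fails whenever $\tau$ has long runs of $1$s.
\item For $\tau=1^\infty$ the target point has $b(1^\infty)=1/2$, i.e.\ $\nu_{1^\infty}([\D])=0$ as in Proposition~\ref{prop:1111}. It lies on the boundary of the cone, where the Hilbert metric is infinite. So no Birkhoff-type argument on this cone (or a subcone) can show that $h_1^k(K)$ shrinks.
\item The same obstruction is inherited by every $\tau$ ending in $1^\infty$.
\end{itemize}
The case you flagged is actually harmless: $h_0^3(K)$ already lies in the interior of $K$. The non-primitivity that matters is that of $M_1$, not $M_0$.

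\paragraph{What closes the gap.}
You need a contraction estimate that is insensitive to fixed points on the boundary. The paper first observes that $h_t(K)\subset E=\{(a,b)\in K : a+b\le 3/4\}$. It then bounds the Jacobian of each single map $h_t$ on $E$ to get $\|h_t(p)-h_t(p')\|_\infty\le\frac{2}{3}\|p-p'\|_\infty$ (Lemma~\ref{lemma:contraction}). This yields the shrinking for every $\tau$ at once.
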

    
    Indeed, since the cylinder sets of the form $[w]$, $w \in \mathcal{A}^*$, constitute a $\Pi$-system generating the Borel $\sigma$-algebra of $Y_\tau$, it will follow from the above proposition that every $y \in Y_\tau$ is generic for some probability measure $\nu$, and that this $\nu$ does not depend on $y \in Y_\tau$.
    
    \bigskip
    
    For the proof of Proposition~\ref{prop:accumulation}, we now fix $y\in Y_\tau$, $(n_k)$ and $\nu$ satisfying~\eqref{eq:weak_convergence}. 
    Then, as stated in the following lemma, for $L\ge 1$ we can also identify accumulation points of empirical measures on the orbit of $\Delta^L y$ in $Y_{S^L\tau}$\footnote{We also use the same symbol $S$ to denote the unilateral shift map acting on $\{0,1\}^\NN$.}, along the sequence $\left(\Sigma_y^L(n_k)\right)$.
    
    \begin{lemma}
    \label{lemma:DeltaLygeneric}
     For each $L\ge1$, we also have the following weak convergence in $\M_1(Y_{S^L\tau})$:
     \begin{equation}
     \label{eq:weak_convergence2}
     \frac{1}{\Sigma_y^L(n_k)} \sum_{j = 1}^{\Sigma_y^L(n_k)} \delta_{S^j \Delta^L y} \tend{k}{\infty} (\Delta^L)_*\bigl(\nu(\,\cdot\,|\,B^L_{Y_\tau})\bigr),
    \end{equation} 
    where $\nu(\,\cdot\,|\,B^L_{Y_\tau})$ is the probability measure $\nu$ conditioned on $B^L_{Y_\tau} := B^L_Y \cap Y_\tau$, and the limit on the right-hand side is the pushforward of this conditioned measure by $\Delta^L$.
    \end{lemma}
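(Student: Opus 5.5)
The plan is to rewrite the empirical measure along the orbit of $\Delta^L y$ as a re-indexed version of the empirical measure along the orbit of $y$, using the commutation formula. Fix $L\ge1$ and a continuous function $f$ on $Y_{S^L\tau}$. By the change of variables already used in the proof of Proposition~\ref{prop:commutation}, we have $k\mapsto j(k)$, where $j(k)$ is the first $j$ with $\Sigma_y^L(j)=k$. This gives a bijection between $\{1,\dots,\Sigma_y^L(n)\}$ and $\{j\le n: S^jy\in B_Y^L\}$. Combined with $S^{\Sigma_y^L(j)}\Delta^L y=\Delta^L(S^jy)$, it yields
\[
\sum_{k=1}^{\Sigma_y^L(n)} f\bigl(S^k\Delta^L y\bigr)=\sum_{j=1}^{n}\ind{B_Y^L}(S^jy)\,f\bigl(\Delta^L(S^jy)\bigr).
\]
Since $S^jy\in Y_\tau$, we may replace $B_Y^L$ by $B^L_{Y_\tau}$. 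Dividing by $n_k$, the right-hand side is $\int g\,dm_y^{n_k}$ with $g:=\ind{B^L_{Y_\tau}}\cdot (f\circ\Delta^L)$. The left-hand side at $f\equiv1$ is $\Sigma_y^L(n_k)/n_k=m_y^{n_k}(B^L_{Y_\tau})$.

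Next I pass to the limit $k\to\infty$. The key point is that $g$ is continuous on $Y_\tau$. First, $B_Y^L$ is clopen, because well-alignment of $y,\Delta y,\dots,\Delta^{L-1}y$ is determined by a bounded window of coordinates of $y$: being well-aligned depends on $y_{-2},\dots,y_2$, say, and $\Delta$ is given by local rules with bounded radius. Second, $\Delta^L$ is continuous on $Y$. Granting this, weak convergence gives
\[
\frac{1}{n_k}\sum_{j\le n_k}g(S^jy)\to\int g\,d\nu \quad\text{and}\quad \frac{\Sigma_y^L(n_k)}{n_k}\to\nu(B^L_{Y_\tau}).
\]
Lemma~\ref{lemma:Sigma_min} gives $\Sigma_y^L(n)\ge\lfloor n/3^L\rfloor$, hence $\nu(B^L_{Y_\tau})\ge 3^{-L}>0$, so the conditioned measure is well defined. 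Taking the quotient, the left side of \eqref{eq:weak_convergence2} integrated against $f$ tends to
\[
\frac{\int_{B^L_{Y_\tau}} f\circ\Delta^L\,d\nu}{\nu(B^L_{Y_\tau})}=\int f\,d(\Delta^L)_*\bigl(\nu(\cdot|B^L_{Y_\tau})\bigr).
\]
Finally, the limit is supported on $Y_{S^L\tau}$, because $\Delta^L$ maps $Y_\tau$ into $Y_{S^L\tau}$, which is closed.

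The main obstacle is the continuity claim, especially the continuity of $\Delta^L$ on $Y$. Here I would argue that the coordinates of $\Delta y$ in $[-r,r]$ are determined by $y$ on $[-3r-3,3r+3]$. This holds because elementary blocks have length at most 3 and the block decomposition is locally readable: a letter starts a 3-block iff it together with the next two letters forms $\A\B\A$, $\B\A\B$, $\C\D\C$ or $\D\C\D$, and this pattern cannot overlap another. I would then iterate this $L$ times. The same locality gives that $B_Y$, and hence $B_Y^L$, are clopen. The remaining points, namely the re-indexing and the positivity of the denominator, are routine.
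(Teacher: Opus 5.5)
Your proposal is correct and follows essentially the same route as the paper: your $g=\ind{B^L_{Y_\tau}}\cdot(f\circ\Delta^L)$ is exactly the paper's $f^*$, and the steps are the same. These are the re-indexing of the sum via the commutation formula, the special case $f\equiv 1$ giving $\Sigma_y^L(n_k)/n_k\to\nu(B^L_{Y_\tau})$, and the bound $\nu(B^L_{Y_\tau})\ge 3^{-L}$ from Lemma~\ref{lemma:Sigma_min}. The only addition is your explicit locality argument for the continuity of $\Delta^L$ and the clopenness of $B^L_Y$, which the paper takes for granted.
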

    
    \begin{proof}
     Let $f:\ Y_{S^L\tau}\to\RR$ be a continuous function. Then we define $f^*:Y_\tau\to\RR$ by 
     \[
      f^*(z) := \begin{cases} f(\Delta^L z) & \text{if } z \in B^L_{Y_\tau}, \\
                 0 & \text{otherwise.}
                \end{cases}
     \]
     Observe that, since $\ind{B^L_{Y_\tau}}$ is continuous on $Y_\tau$, $f^*$ is continuous too. Therefore, applying~\eqref{eq:weak_convergence} we get     
     \begin{equation}
     \label{eq:moyenne_fstar}
     \frac{1}{n_k} \sum_{j = 1}^{n_k} f^*(S^j y) \tend{k}{\infty} \int_{Y_\tau} f^*\,d\nu = \int_{B^L_{Y_\tau}} f\circ \Delta^L\, d\nu.
    \end{equation} 
    In the particular case where $f$ is constantly equal to 1, we get
    \begin{equation}
     \label{eq:fconstant1}
     \frac{1}{n_k} \Sigma_y^L(n_k) = \frac{1}{n_k} \sum_{j=1}^{n_k} \ind{B^L_{Y_\tau}}(S^j y) \tend{k}{\infty} \nu\bigl(B^L_{Y_\tau}\bigr).
    \end{equation}
    Note that, by Lemma \ref{lemma:Sigma_min}, $\Sigma_y^L(n_k) \ge n_k / 3^L$ and therefore $\nu(B^L_{Y_\tau}) \ge 3^{-L} > 0$.
    
    Taking into account the commutation formula~\eqref{eq:commutation}, we can rewrite the sum on the left-hand side of~\eqref{eq:moyenne_fstar} as follows:
    \[
      \sum_{j = 1}^{n_k} f^*(S^j y) = \sum_{j = 1}^{n_k} \ind{B^L_{Y_\tau}}(S^j y) f\left(S^{\Sigma^L_y(j)}(\Delta^L y)\right).
    \]
    But since $\Sigma^L_y(j)$ increases by 1 exactly when $S^j y \in B^L_{Y_\tau}$, this sum becomes
    \[
     \sum_{j = 1}^{n_k} f^*(S^j y) = \sum_{h = 1}^{\Sigma^L_y(n_k)} f\bigl(S^h (\Delta^L y)\bigr).
    \]
    Finally, from~\eqref{eq:moyenne_fstar} and~\eqref{eq:fconstant1}, we get
    \begin{multline*}
     \frac{1}{\Sigma^L_y(n_k)} \sum_{h = 1}^{\Sigma^L_y(n_k)} f\bigl(S^h (\Delta^L y)\bigr) = \frac{n_k}{\Sigma^L_y(n_k)} \frac{1}{n_k} \sum_{j = 1}^{n_k} f^*(S^j y) \\  \tend{k}{\infty} \frac{1}{\nu(B^L_{Y_\tau})}\int_{B^L_{Y_\tau}} f\circ \Delta^L\, d\nu,
    \end{multline*}
    which proves the lemma.    
    \end{proof}

        \subsubsection{Induction for the frequencies of letters}
        
        It follows in particular from Lemma~\ref{lemma:DeltaLygeneric} that for any $L \ge 0$, every elementary block in $\mathcal{A}$ has a frequency in $\Delta^L y$ along the sequence $\bigl(\Sigma_y^L(n_k)\bigr)_{k\ge1}$. Let us denote by $a^L$ (respectively $b^L$, $c^L$ and $d^L$) the frequency of the letter $\A$ (respectively $\B$, $\C$ and $\D$) observed on $\Delta^L y$ along $\bigl(\Sigma_y^L(n_k)\bigr)_{k\ge1}$. 
         
        To simplify the notations, let $w_k^0 := y_{[1, n_k]}$, and for $L \ge 1$, $w_k^L := (\Delta^L y)_{[1,\Sigma^L_y(n_k)]}$, so that 
        \[ a^L = \lim_{k\to\infty} \frac{\left| w_k^L \right|_{\A}}{\left| w_k^L \right|,} \]
        and similarly for $b^L$, $c^L$ and $d^L$. Then, it follows from Lemma~\ref{lemma:P7} that 
        \begin{equation}
         \label{eq:undemi}
            a^L + c^L = b^L + d^L = 1/2.
        \end{equation}
        
        From Section~\ref{sec:substitutions}, we know that there exists $i \in \{0, 1, 2\}$ (depending on $L \ge 0$) such that 
        \[
         \Delta^L y = S^i \varphi_{\tau_L} \bigl( \Delta^{L+1}y \bigr).
        \]
        In particular, $w_k^L$ differs from $\varphi_{\tau_L}(w_k^{L+1})$ by at most 2 letters at its beginning and at its end. 
        
        Let us consider some fixed $L \ge 0$ such that $\tau_L = 0$. Then, from the form of the substitution $\varphi_0$, we have for all $k$
        \begin{equation}
         \label{eq:system0}
         \begin{cases}
          \left| w_k^L \right|_{\A} = \left| w_k^{L+1} \right|_{\A} + 2 \left| w_k^{L+1} \right|_{\C} + O(1) \\
          \left| w_k^L \right|_{\B} = \left| w_k^{L+1} \right|_{\C} + O(1) \\
          \left| w_k^L \right|_{\C} = \left| w_k^{L+1} \right|_{\D} + O(1) \\
          \left| w_k^L \right|_{\D} = \left| w_k^{L+1} \right|_{\B} + 2 \left| w_k^{L+1} \right|_{\D} + O(1)
         \end{cases}
        \end{equation}
        Summing the four above equations, we get
        \[
           \left| w_k^L \right| = 3 \left| w_k^{L+1} \right| - 2 \Bigl(\left| w_k^{L+1} \right|_{\A} + \left| w_k^{L+1} \right|_{\B}\Bigr) + O(1).
        \]
        After dividing by $\left| w_k^{L+1} \right|$, we obtain
        \begin{equation}
         \label{eq:rapport}
         \frac{\left| w_k^L \right|}{\left| w_k^{L+1} \right|} = 3 - 2(a^{L+1} + b^{L+1}) + o(1).
        \end{equation}
        On the other hand, dividing the first line of~\eqref{eq:system0} by $\left| w_k^L \right|$ yields
        \[
          a^L = \frac{\left| w_k^{L+1} \right|}{\left| w_k^L \right|} \Bigl( a^{L+1} + 2c^{L+1} \Bigr) + o(1).
        \]
        Taking~\eqref{eq:undemi} and~\eqref{eq:rapport} into account, and taking the limit as $k \to \infty$, we obtain
        \[
         a^L = \frac{1 - a^{L+1}}{3 - 2(a^{L+1} + b^{L+1})}.
        \]
        Similarly, from the second line of~\eqref{eq:system0}, we get
        \[
         b^L = \frac{c^{L+1}}{3 - 2(a^{L+1} + b^{L+1})}  = \frac{\frac{1}{2} - a^{L+1}}{3 - 2(a^{L+1} + b^{L+1})}.
        \]
        We summarize this in the following lemma:
        
        \begin{lemma}
        \label{lemma:h0}
         If $\tau_L = 0$, we have 
         \[
          \bigl( a^L, b^L \bigr) = h_0  \bigl( a^{L+1}, b^{L+1} \bigr),
         \]
         where $h_0$ is the homography transformation defined by
         \[
          h_0(a, b) := \frac{1}{3 - 2(a + b)} \left(1 - a, \frac{1}{2} - a\right).
         \]
        \end{lemma}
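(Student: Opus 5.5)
The plan is to formalize the computation preceding the statement. We fix $L$ with $\tau_L = 0$. By Section~\ref{sec:substitutions}, more precisely~\eqref{eq:recover_from_first_derivative} applied to $\Delta^L y\in Y_{S^L\tau}$ (whose type is $\tau_L=0$), we have $\Delta^L y = S^i\varphi_0(\Delta^{L+1}y)$ for some $i\in\{0,1,2\}$.

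The first step is to compare the prefix $w_k^L$ with $\varphi_0(w_k^{L+1})$. The commutation formula (Proposition~\ref{prop:commutation}) with $L$ and $L+1$ shows that the positions $1,\dots,\Sigma^{L+1}_y(n_k)$ of $\Delta^{L+1}y$ are exactly those whose images under $\varphi_0$ begin at the well-aligned positions of $\Delta^L y$ lying in $[1,\Sigma^L_y(n_k)]$. Consequently $w_k^L$ and $\varphi_0(w_k^{L+1})$ coincide up to at most two letters at each end. Since $\varphi_0$ sends $\A\mapsto\A$, $\B\mapsto\D$, $\C\mapsto\A\B\A$ and $\D\mapsto\D\C\D$, counting letters gives the system~\eqref{eq:system0} with $O(1)$ errors bounded by $4$, uniformly in $k$.

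Next we divide by lengths and pass to the limit. Lemma~\ref{lemma:Sigma_min} gives $|w_k^{L+1}|\ge n_k/3^{L+1}-1\to\infty$, so the $O(1)$ terms become $o(1)$ after normalization. Summing the four lines yields~\eqref{eq:rapport}, and the limit ratio $3-2(a^{L+1}+b^{L+1})$ lies in $[1,3]$ because $a^{L+1}+b^{L+1}\le 1$. It is therefore nonzero, and we may invert to get $|w_k^{L+1}|/|w_k^L|\to 1/(3-2(a^{L+1}+b^{L+1}))$. The existence of the frequencies $a^{L+1},\dots$ along $\Sigma^{L+1}_y(n_k)$ comes from Lemma~\ref{lemma:DeltaLygeneric}.

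The final step is algebraic. We divide the first and second lines of~\eqref{eq:system0} by $|w_k^L|$ and write $|w_k^{L+1}|_\C/|w_k^L| = (|w_k^{L+1}|_\C/|w_k^{L+1}|)\cdot(|w_k^{L+1}|/|w_k^L|)$. Letting $k\to\infty$ gives $a^L=(a^{L+1}+2c^{L+1})/(3-2(a^{L+1}+b^{L+1}))$ and $b^L=c^{L+1}/(3-2(a^{L+1}+b^{L+1}))$. Substituting $c^{L+1}=1/2-a^{L+1}$ from~\eqref{eq:undemi} (Lemma~\ref{lemma:P7}) then yields $a^{L+1}+2c^{L+1}=1-a^{L+1}$, which is exactly $h_0$. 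The main point needing care is the boundary comparison in the first step, namely checking that the indexing windows match up to bounded error via the commutation formula; the rest is routine.
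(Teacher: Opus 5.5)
Your proposal is correct and follows essentially the same route as the paper: the letter-count system~\eqref{eq:system0} coming from $\Delta^L y = S^i\varphi_0(\Delta^{L+1}y)$, summing it to get the length ratio~\eqref{eq:rapport}, and then substituting $c^{L+1}=\tfrac12-a^{L+1}$ from~\eqref{eq:undemi}. The only additions are ones the paper leaves implicit. You justify via the commutation formula that the windows $w_k^L$ and $\varphi_0(w_k^{L+1})$ match up to two letters at each end, and you check that $3-2(a^{L+1}+b^{L+1})\ge 1$, so the ratio can be inverted.
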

        
        \medskip
        
        Now, let us consider the alternative case, where $\tau_L = 1$. Then the analog of~\eqref{eq:system0} becomes
        \begin{equation}
         \label{eq:system1}
         \begin{cases}
          \left| w_k^L \right|_{\A} = \left| w_k^{L+1} \right|_{\C} + O(1) \\
          \left| w_k^L \right|_{\B} = \left| w_k^{L+1} \right|_{\A} + 2 \left| w_k^{L+1} \right|_{\C} + O(1) \\
          \left| w_k^L \right|_{\C} = \left| w_k^{L+1} \right|_{\B} + 2 \left| w_k^{L+1} \right|_{\D} + O(1) \\
          \left| w_k^L \right|_{\D} = \left| w_k^{L+1} \right|_{\D} + O(1)
         \end{cases}
        \end{equation}
        (Observe that the two first lines have been exchanged.) The asymptotics~\eqref{eq:rapport} remains unchanged, and similar arguments lead to the following counterpart of Lemma~\ref{lemma:h0}: 
        
        \begin{lemma}
        \label{lemma:h1}
         If $\tau_L = 1$, we have 
         \[
          \bigl( a^L, b^L \bigr) = h_1  \bigl( a^{L+1}, b^{L+1} \bigr),
         \]
         where $h_1$ is the homography transformation defined by
         \[
          h_1(a, b) := \frac{1}{3 - 2(a + b)} \left( \frac{1}{2} - a, 1 - a\right).
         \]
        \end{lemma}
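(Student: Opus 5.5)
We mirror the argument given for Lemma~\ref{lemma:h0}, now with $\varphi_1$. By Section~\ref{sec:substitutions}, there is some $i\in\{0,1,2\}$ with $\Delta^L y = S^i\varphi_{1}(\Delta^{L+1}y)$. Hence $w_k^L$ coincides with $\varphi_1(w_k^{L+1})$ up to at most two letters at each end, which is the source of the $O(1)$ terms.

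The first step is to justify system~\eqref{eq:system1}. We read off the images under $\varphi_1$: $\A\mapsto\B$, $\B\mapsto\C$, $\C\mapsto\B\A\B$, $\D\mapsto\C\D\C$.
\begin{itemize}
\item An $\A$ in $w_k^L$ arises only from a $\C$ (middle letter of $\B\A\B$).
\item A $\B$ arises once from each $\A$ and twice from each $\C$.
\item A $\C$ arises once from each $\B$ and twice from each $\D$.
\item A $\D$ arises once from each $\D$.
\end{itemize}
Summing the four lines gives $|w_k^L| = |w_k^{L+1}|_\A+|w_k^{L+1}|_\B+3|w_k^{L+1}|_\C+3|w_k^{L+1}|_\D+O(1)$. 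This equals $3|w_k^{L+1}|-2(|w_k^{L+1}|_\A+|w_k^{L+1}|_\B)+O(1)$, so~\eqref{eq:rapport} holds verbatim. Since $|w_k^{L+1}|=\Sigma_y^{L+1}(n_k)\to\infty$ by Lemma~\ref{lemma:Sigma_min}, the $O(1)$ terms vanish after normalization. Moreover, the limiting ratio $3-2(a^{L+1}+b^{L+1})$ is at least $1$, because $a+b\le 1$. So we may divide by it.

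The second step is to divide the first two lines of~\eqref{eq:system1} by $|w_k^L|$ and let $k\to\infty$. The frequencies exist along the subsequences by Lemma~\ref{lemma:DeltaLygeneric}. We obtain
\[
a^L=\frac{c^{L+1}}{3-2(a^{L+1}+b^{L+1})},\qquad b^L=\frac{a^{L+1}+2c^{L+1}}{3-2(a^{L+1}+b^{L+1})}.
\]
We then substitute $c^{L+1}=\tfrac12-a^{L+1}$ from~\eqref{eq:undemi}. This gives $a^{L+1}+2c^{L+1}=1-a^{L+1}$, which yields exactly $h_1$.

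There is no serious obstacle. The only delicate point is the bookkeeping of the boundary mismatch between $w_k^L$ and $\varphi_1(w_k^{L+1})$. Both windows are indexed consistently by the commutation formula (Proposition~\ref{prop:commutation}), so the discrepancy is bounded by a constant independent of $k$, exactly as in the case $\tau_L=0$.
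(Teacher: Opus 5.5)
Your proof is correct and follows the paper's own argument: you read off system~\eqref{eq:system1} from the images under $\varphi_1$, reuse the asymptotic~\eqref{eq:rapport}, and substitute $c^{L+1} = \tfrac12 - a^{L+1}$ from~\eqref{eq:undemi} to obtain $h_1$. Your extra remarks, that $|w_k^{L+1}| \to \infty$ and that the denominator $3-2(a^{L+1}+b^{L+1})$ is at least $1$, simply make explicit details the paper leaves implicit.
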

        
        \subsubsection{Contracting homographies}
        \label{sec:homographies}
        The purpose of this section is to establish nice contracting properties for the homographies $h_0$ and $h_1$ defined in Lemmas~\ref{lemma:h0} and~\ref{lemma:h1}. We first observe that the quantities $a^L$ and $b^L$ are always in $[0,\frac{1}{2}]$, therefore it is natural to consider the action of $h_0$ and $h_1$ on the square $K := [0,\frac{1}{2}] \times [0,\frac{1}{2}]$. Moreover, elementary computations show that for $i\in\{0,1\}$, $h_i(K) \subset E$, where $E$ is the compact domain
        \begin{equation}\label{eq:defE}
           E := \left\{(a, b) \in K \mid a + b \le \frac{3}{4}\right\}
        \end{equation}
        (see Figure~\ref{fig:h0h1}).
        \begin{figure}
         \begin{center}
                \includegraphics[width = 5cm]{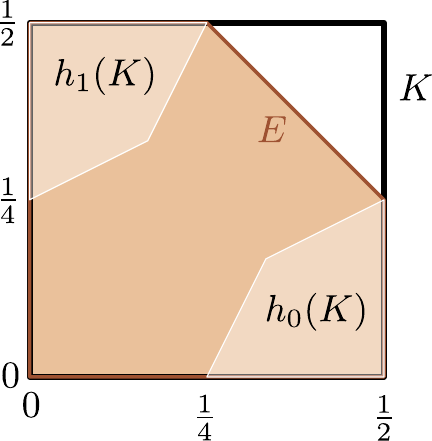}                                           
         \end{center}
        \caption{Homographies $h_0$ and $h_1$ acting on the square $K = [0,\frac{1}{2}] \times [0,\frac{1}{2}]$}\label{fig:h0h1}
        \end{figure} 
        
        It follows by Lemmas~\ref{lemma:h0} and~\ref{lemma:h1} that for every $L \ge 0$, $(a^L, b^L) \in E$. Thus we are led to consider $h_0$ and $h_1$ as maps: $E\to E$.
        
        \begin{lemma}
        \label{lemma:contraction}
         For $i\in\{0,1\}$, $h_i:E\to E$ is contracting for $\|\cdot\|_\infty$. More precisely, for all $p, p'\in E$, we have 
         \[
           \| h_i(p) - h_i(p') \|_\infty \le \frac{2}{3} \| p - p' \|_\infty.
         \]
        \end{lemma}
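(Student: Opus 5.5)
The plan is to bound the Jacobian of $h_i$ in the operator norm induced by $\|\cdot\|_\infty$, and then apply the mean value inequality on the convex set $E$. Note first that $E$ is convex, being the intersection of the square $K$ with the half-plane $\{a+b\le \frac34\}$. On $E$ the denominator $D(a,b) := 3-2(a+b)$ satisfies $D \ge 3 - \frac32 = \frac32 > 0$. Hence $h_0$ and $h_1$ are smooth on a neighbourhood of $E$.

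For $\|\cdot\|_\infty$, the induced operator norm of a $2\times 2$ matrix is its maximal absolute row sum. So it suffices to show that at every point of $E$, each row of the Jacobian of $h_i$ has absolute row sum at most $\frac23$. Since $h_1$ is obtained from $h_0$ by swapping its two components, the Jacobian of $h_1$ is that of $h_0$ with its rows exchanged, and it suffices to treat $h_0$.

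Write $h_0 = (f,g)$ with $f = (1-a)/D$ and $g = (\frac12 - a)/D$, and use $\partial_a D = \partial_b D = -2$. A direct computation gives
\[
\partial_a f = \frac{2b-1}{D^2},\quad \partial_b f = \frac{2(1-a)}{D^2},\quad
\partial_a g = \frac{2b-2}{D^2},\quad \partial_b g = \frac{1-2a}{D^2}.
\]
On $E$ we have $a,b\in[0,\frac12]$, so the signs of these partials are fixed. The absolute row sums are therefore
\[
\frac{(1-2b)+2(1-a)}{D^2} = \frac{3-2(a+b)}{D^2} = \frac1D
\quad\text{and}\quad
\frac{(2-2b)+(1-2a)}{D^2} = \frac1D .
\]
Both rows have the same absolute sum $1/D$, and $1/D \le \frac23$ on $E$ because $a+b\le\frac34$.

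To conclude, take $p,p'\in E$. The segment $[p,p']$ lies in $E$ by convexity, and
\[
h_i(p)-h_i(p') = \int_0^1 Dh_i\bigl(p'+t(p-p')\bigr)\,(p-p')\,dt .
\]
Taking $\|\cdot\|_\infty$ and using the operator-norm bound pointwise gives $\|h_i(p)-h_i(p')\|_\infty \le \frac23\|p-p'\|_\infty$.

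The only delicate step is the exact computation of the partial derivatives. The key observation is that each row's absolute sum collapses to $1/D$; once this is seen, the bound $\frac23$ follows directly from the constraint $a+b\le\frac34$ that defines $E$.
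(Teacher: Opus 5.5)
Your proof is correct and follows essentially the same route as the paper: both bound the Jacobian of $h_0$ in the $\|\cdot\|_\infty$ operator norm by $1/(3-2(a+b))\le 2/3$ on $E$. The paper does this by evaluating $J$ at the vertices $(1,\pm1)$ of the unit ball, while you use the maximal absolute row sum, which, given the fixed signs of the entries, equals exactly $1/(3-2(a+b))$ for each row. Your explicit use of the convexity of $E$ and the integral mean value inequality also fills in a step the paper leaves implicit.
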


        \begin{proof}
         It is obviously enough to prove the result for $h_0$. Let $(a, b) \in E$, and let J be the Jacobian of $h_0$ at $(a, b)$
         \[
            J := J_{h_0}(a,b) = \frac{1}{\bigl(3 - 2(a + b)\bigr)^2} 
              \begin{pmatrix}
               2b - 1 & 2 - 2a \\
               2b - 2 & 1 - 2a 
              \end{pmatrix}.
         \]
         For $r \ge 0$, let 
         \[
          B_\infty(r) := \bigl\{(x, y) \in \RR^2 \mid \| (x, y) \|_\infty \le r \bigr\}.
         \]
         We want to prove that $J\bigl(B_\infty(1)\bigr) \subset B_\infty\left(\frac{2}{3}\right)$. Since every point in $B_\infty(1)$ is a convex combination of $(1, 1)$, $(1, -1)$, $(-1, -1)$ and $(-1, 1)$, it is enough to check that
         \begin{equation}
          \label{eq:contraction}
          \left\| J \begin{pmatrix}
                     1 \\
                     -1
                    \end{pmatrix}
                    \right\|_\infty \le \frac{2}{3},
                    \quad\text{ and }\quad
          \left\| J \begin{pmatrix}
                     1 \\
                     1
                    \end{pmatrix}
                    \right\|_\infty \le \frac{2}{3}.
         \end{equation}
        We have 
        \[
         J \begin{pmatrix}
                     1 \\
                     -1
                    \end{pmatrix}
                    = \frac{1}{3 - 2(a + b)}\begin{pmatrix}
                     -1 \\
                     -1
                    \end{pmatrix},
        \]
        and on $E$ we have $3 - 2(a + b) \ge \frac{3}{2}$, which gives the first inequality in~\eqref{eq:contraction}. For the second one, we have
        \[
         J \begin{pmatrix}
                     1 \\
                     1
                    \end{pmatrix}
                    = \frac{1}{\bigl(3 - 2(a + b)\bigr)^2}\begin{pmatrix}
                     1 + 2(b - a) \\
                     -1 + 2(b - a)
                    \end{pmatrix}.
        \]
        But as $(a, b) \in E$, we have
        \[
         |1 + 2(b - a)| = 1 + 2(b - a) \le 2 - 2a \le 3 - 2(a + b), 
        \]
        and similarly         
        \[
         |-1 + 2(b - a)| = 1 + 2(a - b) \le 3 - 2(a + b).
        \]
        This gives 
        \[ \left\| J \begin{pmatrix}
                     1 \\
                     1
                    \end{pmatrix}
                    \right\|_\infty \le \frac{1}{3 - 2(a + b)}
        \] 
        and we conclude the proof of the second inequality in~\eqref{eq:contraction} as the first one.
        \end{proof}
        
        By compacity of $E$, we get the following result:
        
        \begin{corollary}
         For every sequence $t = (t_L)_{L \ge 0} \in \{0, 1\}^\NN$, there exists $\bigl(a(t), b(t)\bigr) \in E$ such that 
         \[ \bigcap_{L \ge 0} h_{t_0} \circ \cdots \circ h_{t_L} (E) = \bigl\{\bigl(a(t), b(t)\bigr)\bigr\}. \]
        \end{corollary}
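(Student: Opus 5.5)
The plan is a standard nested-compact-sets argument based on Lemma~\ref{lemma:contraction}. For $L \ge 0$ set $E_L := h_{t_0} \circ \cdots \circ h_{t_L}(E)$. I will show that $(E_L)$ is a decreasing sequence of nonempty compact sets whose diameters go to $0$. The intersection is then nonempty by compactness, and it has diameter $0$, so it is a single point. We call this point $\bigl(a(t),b(t)\bigr)$.

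\textbf{Nonempty and compact.} First I check that $h_0$ and $h_1$ are continuous on $E$. Their common denominator is $3-2(a+b)$, which is at least $\frac32 > 0$ on $E$. Since $E$ is compact and nonempty, each $E_L$, as a continuous image of $E$, is compact and nonempty.

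\textbf{Nestedness.} We already know from the discussion before Lemma~\ref{lemma:contraction} that $h_i(E)\subset h_i(K)\subset E$. It follows that
\[
E_{L+1} = h_{t_0}\circ\cdots\circ h_{t_L}\bigl(h_{t_{L+1}}(E)\bigr) \subset h_{t_0}\circ\cdots\circ h_{t_L}(E) = E_L .
\]

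\textbf{Diameters.} Lemma~\ref{lemma:contraction} says each $h_i$ is $\frac23$-Lipschitz on $E$ for $\|\cdot\|_\infty$. A composition of $L+1$ such maps is therefore $(2/3)^{L+1}$-Lipschitz. Since $E\subset[0,\frac12]^2$ has $\|\cdot\|_\infty$-diameter $\frac12$, we get
\[
\operatorname{diam}(E_L)\le \tfrac12\,(2/3)^{L+1} \longrightarrow 0 .
\]

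\textbf{Conclusion.} By Cantor's intersection theorem, $\bigcap_L E_L \neq \emptyset$. Any two points of the intersection lie in every $E_L$, so their distance is at most $\operatorname{diam}(E_L)$ for all $L$, hence zero. The intersection is therefore a singleton.

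\textbf{Main obstacle.} The one subtle point is that the contraction estimate in Lemma~\ref{lemma:contraction} is proved through the Jacobian. Turning a bound on the Jacobian into a Lipschitz bound requires integrating $J$ along the segment $[p,p']$. This is legitimate because $E$ is convex: it is the intersection of a square with a half-plane. So the segment stays in $E$, and the mean value inequality in the $\|\cdot\|_\infty$-operator norm applies. I will simply rely on the lemma as stated, noting that convexity of $E$ is what justifies it.
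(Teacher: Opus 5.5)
Your proof is correct and is essentially the paper's argument. The paper just cites compactness of $E$ together with the $\frac{2}{3}$-contraction of Lemma~\ref{lemma:contraction}, and your nested compact sets $E_L$ with $\mathrm{diam}(E_L)\le \frac{1}{2}(2/3)^{L+1}$ spell this out, including the convexity of $E$ that the paper leaves implicit when passing from the Jacobian bound to a Lipschitz bound.
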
\label{cor:def_a_b}

        Coming back to the frequencies of letters observed in $\Delta^L y$ along the increasing sequence $\bigl(\Sigma_y^L(n_k)\bigr)_{k\ge1}$, we can conclude using Lemmas~\ref{lemma:h0} and~\ref{lemma:h1} together with~\eqref{eq:undemi} that these frequencies are neither dependent on the choice of $(n_k)$ nor on $y$, they depend only on $\tau$:
        
        \begin{corollary}
        \label{cor:symbol_frequency}
         For each $L\ge 0$, we have 
         \[
          a^L = a({S^L \tau}), \quad
          b^L = b({S^L \tau}), \quad
          c^L = \frac{1}{2} - a({S^L \tau}), \quad \text{ and }\quad
          d^L = \frac{1}{2} - b({S^L \tau}).
         \]
        \end{corollary}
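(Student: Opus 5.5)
The plan is to show that, for each fixed $L\ge 0$, the pair $(a^L,b^L)$ lies in every set $h_{\tau_L}\circ\cdots\circ h_{\tau_{L+M}}(E)$ for $M\ge 0$. Corollary~\ref{cor:def_a_b}, applied to the sequence $t = S^L\tau$, then identifies $(a^L,b^L)$ with $\bigl(a(S^L\tau),b(S^L\tau)\bigr)$. The formulas for $c^L$ and $d^L$ follow directly from~\eqref{eq:undemi}.

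First I will make sure the frequencies exist for every level simultaneously along the same subsequence. Lemma~\ref{lemma:DeltaLygeneric} gives, for each $L$, weak convergence of the empirical measures of $\Delta^L y$ along $\bigl(\Sigma_y^L(n_k)\bigr)_k$. Evaluating at the clopen cylinders $[\A],[\B],[\C],[\D]$ shows that the limits $a^L,b^L,c^L,d^L$ exist, so no further subsequence extraction is needed. Each $(a^L,b^L)$ lies in $K$, because $a^L\le a^L+c^L=1/2$ and likewise for $b^L$. By Lemmas~\ref{lemma:h0} and~\ref{lemma:h1}, $(a^L,b^L)=h_{\tau_L}(a^{L+1},b^{L+1})$ for every $L$. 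Since $h_i(K)\subset E$, every $(a^L,b^L)$ in fact lies in $E$.

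Next I iterate. For each $M\ge 0$,
\[
(a^L,b^L) = h_{\tau_L}\circ\cdots\circ h_{\tau_{L+M}}\bigl(a^{L+M+1},b^{L+M+1}\bigr) \in h_{\tau_L}\circ\cdots\circ h_{\tau_{L+M}}(E).
\]
Intersecting over $M$ and applying Corollary~\ref{cor:def_a_b} with $t=S^L\tau$ gives $(a^L,b^L)=\bigl(a(S^L\tau),b(S^L\tau)\bigr)$. Alternatively, the diameter of the nested images is at most $(2/3)^{M+1}\operatorname{diam}(E)$ by Lemma~\ref{lemma:contraction}, which tends to $0$. Since the right-hand side involves only $\tau$, the values do not depend on $(n_k)$ or on $y$.

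I expect the only delicate point to be checking that Lemmas~\ref{lemma:h0} and~\ref{lemma:h1} apply at every level $L$ along the single family of sequences $\bigl(\Sigma_y^L(n_k)\bigr)_k$, with consistent indexing between levels $L$ and $L+1$. Concretely, $w_k^L$ and $\varphi_{\tau_L}(w_k^{L+1})$ must differ only by $O(1)$ letters at their ends. I would verify this with the commutation formula~\eqref{eq:commutation}, which gives $\Sigma^{1}_{\Delta^L y}\bigl(\Sigma^L_y(n_k)\bigr)=\Sigma^{L+1}_y(n_k)$. This identity shows that the prefix of $\Delta^L y$ of length $\Sigma^L_y(n_k)$ corresponds, up to boundary blocks, to the prefix of $\Delta^{L+1}y$ of length $\Sigma^{L+1}_y(n_k)$. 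Together with $|w_k^{L+1}|\to\infty$ (Lemma~\ref{lemma:Sigma_min}), the $O(1)$ errors vanish after normalization, which justifies the limits taken in deriving $h_0$ and $h_1$.
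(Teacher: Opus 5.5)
Your proof is correct and follows the same route as the paper. The relation $(a^L,b^L)=h_{\tau_L}(a^{L+1},b^{L+1})$ from Lemmas~\ref{lemma:h0} and~\ref{lemma:h1}, together with $h_i(K)\subset E$, places $(a^L,b^L)$ in every nested image $h_{\tau_L}\circ\cdots\circ h_{\tau_{L+M}}(E)$. The nested-intersection corollary following Lemma~\ref{lemma:contraction}, applied to $S^L\tau$, then identifies that point, and \eqref{eq:undemi} gives $c^L$ and $d^L$. Your use of the commutation identity to check that $w_k^L$ and $\varphi_{\tau_L}(w_k^{L+1})$ differ by $O(1)$ letters fills in a step the paper only asserts.
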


        \begin{remark}
        \label{rem:c+d}
        Of course, any limit point $\bigl(a(t), b(t)\bigr)$ has to be in the compact set $E$ defined in~\eqref{eq:defE}. Therefore, for all $L \ge 0$ we have $a^L + b^L \le 3/4$ and $c^L + d^L \ge 1/4$. 
        \end{remark}
        
        \subsubsection{The case of $B_{Y_\tau}^L$}
        
        To use Lemma \ref{lemma:DeltaLygeneric} for general finite words, we need to ensure that for any $L \geq 1$, the measure $\nu(B_{Y_\tau}^L)$ depends only on $\tau$. 
        
        \begin{proposition}
            \label{prop:BLY}
            For any $L \geq 1$, we have
            \[
            \nu\bigl(B_{Y_\tau}^L\bigr) = \lim_{k\to\infty} \frac{\left|w_k^L\right|}{n_k} = \prod_{\ell = 1}^L \frac{1}{1+2(c^\ell + d^\ell)} \tend{L}{\infty} 0.
            \]
        \end{proposition}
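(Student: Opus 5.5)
The first equality comes straight from equation~\eqref{eq:fconstant1} in the proof of Lemma~\ref{lemma:DeltaLygeneric}. That equation gives $\Sigma_y^L(n_k)/n_k \to \nu(B^L_{Y_\tau})$, and by definition $|w_k^L| = \Sigma_y^L(n_k)$. So the only real content is the product formula and its limit.

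For the product formula, I write $|w_k^L|/n_k$ as a telescoping product
\[
\frac{|w_k^L|}{n_k} = \prod_{\ell=1}^{L} \frac{|w_k^{\ell}|}{|w_k^{\ell-1}|},
\]
using $w_k^0 = y_{[1,n_k]}$, so that $|w_k^0| = n_k$. It then suffices to find the limit of each ratio $|w_k^{\ell-1}|/|w_k^{\ell}|$. This is exactly~\eqref{eq:rapport}, applied with $L=\ell-1$, which holds whatever the value of $\tau_{\ell-1}$, as noted after~\eqref{eq:system1}:
\[
\frac{|w_k^{\ell-1}|}{|w_k^{\ell}|} \tend{k}{\infty} 3 - 2(a^\ell + b^\ell) = 1 + 2(c^\ell+d^\ell).
\]
The last equality uses~\eqref{eq:undemi}: since $a^\ell+b^\ell+c^\ell+d^\ell = 1$, we get $3-2(a^\ell+b^\ell) = 1+2(c^\ell+d^\ell)$.

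There is one point to justify: \eqref{eq:rapport} requires $|w_k^{\ell}| \to \infty$, so that the $O(1)$ boundary errors, coming from the shift $S^i$ with $i\in\{0,1,2\}$ in $\Delta^{\ell-1}y = S^i\varphi_{\tau_{\ell-1}}(\Delta^\ell y)$, disappear after division. This follows from Lemma~\ref{lemma:Sigma_min}, which gives $|w_k^\ell| = \Sigma_y^\ell(n_k) \ge \lfloor n_k/3^\ell\rfloor$. Taking $k\to\infty$ in the finite product then yields the formula. By Corollary~\ref{cor:symbol_frequency}, each factor equals $1/(1+2(1-a(S^\ell\tau)-b(S^\ell\tau)))$, so the product depends only on $\tau$.

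For the limit as $L\to\infty$, Remark~\ref{rem:c+d} gives $c^\ell+d^\ell \ge 1/4$, so each factor is at most $1/(1+1/2) = 2/3$. The product is therefore bounded by $(2/3)^L \to 0$.

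The main care point is the counting identity behind~\eqref{eq:rapport}. One has to make sure that $w_k^{\ell-1}$, which is a window of length $\Sigma_y^{\ell-1}(n_k)$ of $\Delta^{\ell-1}y$, really corresponds, up to boundedly many letters at each end, to $\varphi_{\tau_{\ell-1}}(w_k^{\ell})$, where $w_k^\ell$ has length $\Sigma_y^{\ell}(n_k)$. I would check this with the commutation formula (Proposition~\ref{prop:commutation}) applied to $z=\Delta^{\ell-1}y$. The number of well-aligned shifts of $z$ among the first $\Sigma^{\ell-1}_y(n_k)$ is $\Sigma_z^1(\Sigma_y^{\ell-1}(n_k)) = \Sigma_y^\ell(n_k)$, which is the identity established in the proof of that proposition. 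So the first $\Sigma_y^{\ell-1}(n_k)$ letters of $z$ consist of exactly $\Sigma_y^\ell(n_k)$ elementary blocks starting in that window, plus at most two letters of incomplete blocks at the ends. Those blocks are the $\varphi_{\tau_{\ell-1}}$-images of the letters of $w_k^\ell$.
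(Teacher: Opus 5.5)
Your proposal is correct and is essentially the paper's argument. The paper also takes the first equality from \eqref{eq:fconstant1}, gets $|w_k^{L}| = |w_k^{L+1}|\bigl(1+2(c^{L+1}+d^{L+1})+o(1)\bigr)$ from $|\varphi_{\tau_L}(w_k^{L+1})| = |w_k^{L+1}| + 2\bigl(|w_k^{L+1}|_\C+|w_k^{L+1}|_\D\bigr)+O(1)$ (the same computation as your use of \eqref{eq:rapport} with $a^\ell+b^\ell+c^\ell+d^\ell=1$), inducts on $L$ where you telescope, and deduces the limit $0$ from Remark~\ref{rem:c+d}; your checks that $|w_k^\ell|\to\infty$ (Lemma~\ref{lemma:Sigma_min}) and that the window of length $\Sigma_y^{\ell-1}(n_k)$ matches $\varphi_{\tau_{\ell-1}}(w_k^\ell)$ up to $O(1)$ letters (commutation identity) supply details the paper leaves implicit.
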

        
        \begin{proof}
            Recall that $\left|w_k^L\right| = \Sigma^L_y(n_k)$, so that the first equality is just a reformulation of~(\ref{eq:fconstant1}). Moreover, the convergence to 0 of the product expression as $L \to \infty$ is a straightforward consequence of Remark~\ref{rem:c+d}. So we only need to show the second equality.
            
            Recall, as we already observed, that for all $L \ge 0$ the word $w_k^L$ differs from $\varphi_{\tau_L}(w_k^{L+1})$ by at most two symbols at the beginning and at the end. Therefore, we have
            \begin{align*}
                |w_k^L| & = |\varphi_{\tau_L}(w_k^{L+1})| + O(1) \\
                & = |w_k^{L+1}|_\A + |w_k^{L+1}|_\B + 3|w_k^{L+1}|_\C + 3|w_k^{L+1}|_\D + O(1) \\
                & = |w_k^{L+1}| + 2\bigl(|w_k^{L+1}|_\C + |w_k^{L+1}|_\D\bigr) + O(1) \\
                & = |w_k^{L+1}| \bigl( 1 + 2 (c^{L+1} + d^{L+1}) + o(1)\bigr).
            \end{align*}
            In particular, for $L = 0$ we get 
            \[ n_k = |w_k^1| \bigl( 1 + 2 (c^{1} + d^{1}) + o(1)\bigr), \]
            which yields
            \[ \lim_{k\to\infty} \frac{|w_1^k|}{n_k} = \frac{1}{1 + 2(c^1 + d^1)} \]
            Furthermore, we have in general 
            \[
            \frac{|w_k^{L+1}|}{n_k} = \frac{|w_k^{L+1}|}{n_k} \frac{1}{1 + 2 (c^{L+1} + d^{L+1}) + o(1)}\, ,
            \]
            from which we get the announced formula by an easy induction on $L$. 
        \end{proof}
        
        Now that we know that $\nu(B_{Y_\tau}^L)$ only depends on $\tau$ and not on $(n_k)_{k \geq 0}$, we can look at the frequency of larger words.
        
        \subsubsection{Frequencies of general finite words}
        
        We fix a finite word $w \in \{\A,\B,\C,\D\}^m$, of length $m \in \mathbb{N}$. To prove that the limit of $\frac{|w_k^0|_w}{n_k}$ only depends on $\tau$, the idea is to decompose $w_k$ into images of letters from some $w_k^L$ with large enough $L$.
        
        We define, for all symbol $\S\in \{\A,\B,\C,\D\}$ and all $L \geq 1$, the block $\S_L := \varphi_{\tau[0,L)}(\S)$ (we will also use the notation $s^L$ for the frequency of $\S$ in $\Delta^L y$).
        For example, if $\tau[0,3) = 000$, we have $\B_3 := \varphi_0^3(\B) = \D\C\D\A\B\A\D\C\D$ and $\C_3 := \varphi_0^3(\C) = \A\D\C\D\A$.
        
        \begin{figure}[h!]       
        \begin{center}
        \includegraphics{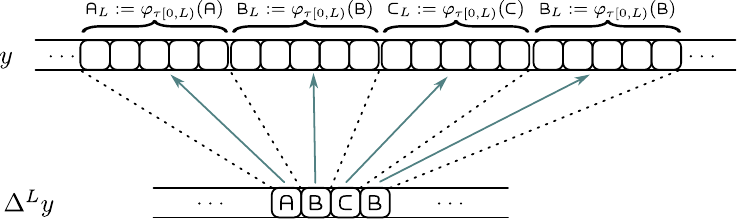}
        \caption{Decomposing a recoding $y$ in terms of images of letters of one of its derivative $\Delta^L y$}
        \end{center}
        \end{figure}
        
        The larger the degree $L$ of derivation, the less likely occurrences of $w$ in $y$ will be to straddle two adjacent $\S_L$. So, as it will be justified in the following lemma, we can approximate $|w_k^0|_w$ by $\sum_{\S\in \{\A,\B,\C,\D\}} |w_k^L|_\S |\S_L|_w$. Dividing this approximation by the length $n_k$ of $w_k^L$, we get
        $$\sum_{\S\in \{\A,\B,\C,\D\}} \frac{|w_k^L|_\S}{n_k}  |\S_L|_w = \frac{\Sigma_y^L(n_k)}{n_k} \sum_{\S\in \{\A,\B,\C,\D\}} \frac{|w_k^L|_\S}{\Sigma_y^L(n_k)}  |\S_L|_w.$$
        By Corollary~\ref{cor:symbol_frequency} and Proposition~\ref{prop:BLY}, the latter expression converges as $k \to \infty$ for any $L \geq 1$, and its limit
        \[ \alpha_L := \nu(B_{Y_\tau}^L) \sum_{\S\in \{\A,\B,\C,\D\}} s^L |\S_L|_w \]
        depends only on $\tau$. 
        
        
        \begin{lemma}
            \label{lemma:freq-general}
            The sequence $(\alpha_L)$ being defined from the word $w$ as above,  for any $\varepsilon > 0$, there exists $L \geq 1$ and $K \geq 1$ such that for all $k \ge K$ 
            $$ \left|\frac{|w_k^0|_w}{n_k} - \alpha_L\right| < \varepsilon.$$
        \end{lemma}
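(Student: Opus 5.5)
The plan is to compare occurrences of $w$ in $w_k^0$ with occurrences inside the blocks $\S_L$ obtained by decomposing $y$ along $\Delta^L y$, and to show that the error is small. The error comes from occurrences straddling two consecutive blocks, plus boundary effects.

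\textbf{Step 1: the decomposition.} Iterate Proposition~\ref{Prop prim} with the window bookkeeping used in the proof of Proposition~\ref{prop:BLY}. This shows that $w_k^0$ coincides, up to $O_L(1)$ symbols at each end, with the concatenation $\varphi_{\tau[0,L)}(w_k^L) = \S^{(1)}_L \S^{(2)}_L \cdots \S^{(N)}_L$, where $N = |w_k^L| = \Sigma^L_y(n_k)$. Concretely, $w_k^0$ and $\varphi_{\tau[0,L)}(w_k^L)$ differ by at most $3^L$ symbols at each end, by the bound $i \le 3^L - 1$ of Proposition~\ref{Prop prim} combined with \eqref{eq:phi_S}.

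\textbf{Step 2: counting.} Every occurrence of $w$ in the concatenation lies either entirely inside some block $\S^{(j)}_L$ or straddles a boundary between consecutive blocks. Each boundary is straddled by at most $m-1$ occurrences, since the occurrence must start at one of the $m-1$ positions just before the boundary. Hence
\[
\Bigl| |w_k^0|_w - \sum_{\S} |w_k^L|_\S\, |\S_L|_w \Bigr| \le (m-1)\,\Sigma^L_y(n_k) + C_L,
\]
with $C_L$ independent of $k$. Dividing by $n_k$ and using Corollary~\ref{cor:symbol_frequency} and Proposition~\ref{prop:BLY}, the main term tends to $\alpha_L$. The error term satisfies
\[
\limsup_{k\to\infty} \frac{(m-1)\,\Sigma^L_y(n_k) + C_L}{n_k} \le (m-1)\,\nu\bigl(B^L_{Y_\tau}\bigr).
\]

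\textbf{Step 3: choosing $L$ and $K$.} By Proposition~\ref{prop:BLY}, $\nu\bigl(B^L_{Y_\tau}\bigr) \to 0$ as $L \to \infty$. First choose $L$ with $(m-1)\,\nu\bigl(B^L_{Y_\tau}\bigr) < \varepsilon/3$. Then choose $K$ such that, for $k \ge K$, the main term is within $\varepsilon/3$ of $\alpha_L$ and the remaining error terms (the $C_L/n_k$ part and the $o(1)$ fluctuation of $\Sigma^L_y(n_k)/n_k$ around its limit) are below $\varepsilon/3$. Summing the three contributions gives the lemma.

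\textbf{Main obstacle.} The delicate point is Step 1: getting a clean, $k$-independent bound on the boundary discrepancy between $w_k^0$ and $\varphi_{\tau[0,L)}(w_k^L)$. The per-level offsets in $\{0,1,2\}$ get amplified by the substitutions at each subsequent level. I would handle this by an induction on $L$ with an explicit bound of order $3^L$ symbols, reusing \eqref{eq:phi_S}. The key observation is that this bound grows with $L$ but is fixed for each fixed $L$, so it disappears after dividing by $n_k \to \infty$.
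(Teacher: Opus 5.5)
Your proposal is correct and follows essentially the same route as the paper. You decompose $y_{[1,n_k]}$ into the blocks $\S_L$ indexed by $w_k^L$. You bound the boundary effect by an $O(3^L)$ term and the straddling occurrences by a multiple of $\Sigma^L_y(n_k)$. You then use Proposition~\ref{prop:BLY} to pick $L$ with $\nu(B^L_{Y_\tau})$ small before choosing $K$. The differences are cosmetic: your constant $m-1$ per boundary is slightly sharper than the paper's $m$, and you merge the paper's two remainders $R_{L,n_k}(w)$ and $R'_{L,n_k}(w)$ into a single bound.
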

        
        \begin{proof}
            Given $\varepsilon > 0$, by~Proposition~\ref{prop:BLY} we can find $L \geq 1$ such that $\nu(B_{Y_\tau}^L) < \frac{\varepsilon}{4m}$ (recall that $m$ is the length of $w$). 
            For such an $L$, we decompose $y$ into blocks $\S_L$ with $\S \in \{\A,\B,\C,\D\}$.
            We then link the number of occurrences of $w$ in $y$ to the number of occurrences of the respective letters in $\Delta^L y$: for any $k \ge 1$, we have
            \begin{equation}
                \label{eq:decomp-3.10}
                \occu{y}{[1,n_k]}{w}= \sum_{\S \in \{\A,\B,\C,\D\}} \occu{\Delta^L y}{\left[1,\Sigma^L_y(n_k)\right]}{\S} \left|\S_L\right|_w + R_{L,n_k}(w) + R'_{L,n_k}(w),
            \end{equation}
            where the two remaining terms are  
            \begin{itemize}
                \item $R_{L,n_k}(w)$: the number of occurrences of $w$ that are lost due to the potential shift from Proposition~\ref{Prop prim}. We know that $R_{L,n_k}(w) = \mathcal{O}(3^L)$, hence $\frac{R_{L,n_k}(w)}{n_k} \tend{k}{\infty} 0$.
                \item $R'_{L,n_k}(w)$: the number of occurrences of $w$ in $y$ that straddle two adjacent blocks $\S_L$. Since there are at most $\Sigma^L_y(n_k)$ such patterns, there are at most $m  \Sigma^L_y(n_k)$ such occurrences (the factor $m$ comes from the different shifts of $w$ that may straddle two given adjacent blocks). Therefore, we have $\frac{R'_{L,n_k}(w)}{n_k} \leq m  \frac{\Sigma_y^L(n_k)}{n_k}$. By Proposition~\ref{prop:BLY} and by choice of $L$, we get 
                \[ \limsup_{k \to \infty} \frac{R'_{L,n_k}(w)}{n_k} \leq m  \nu(B^L_{Y_\tau}) < \frac{\varepsilon}{4}. \]
            \end{itemize}
            
            Now, dividing the right-hand side of (\ref{eq:decomp-3.10}) by $n_k$,  we get 
            \begin{equation}
               \label{eq:decomp3.10-bis} 
               \frac{\Sigma^L_y(n_k)}{n_k}  \sum_{\S \in \{\A,\B,\C,\D\}} \frac{\occu{\Delta^L y}{\left[1,\Sigma^L_y(n_k)\right]}{\S}}{\Sigma^L_y(n_k)}  \left|\S_L\right|_w
               + 
               \frac{R_{L,n_k}(w)}{n_k}
               + 
               \frac{R'_{L,n_k}(w)}{n_k}.
            \end{equation}
            The first of the three terms above converges to $\alpha_L$ as $k \to \infty$, let $K_1$ be the index from which this first term is close to $\alpha_L$ to within $\varepsilon/3$; let also $K_2$ be the index from which $\frac{1}{n_k}R_{L,n_k}(w) < \varepsilon/3$, and let $K_3$ be the index from which $\frac{1}{n_k}R'_{L,n_k}(w) < \varepsilon/3$. Then we can set $K := \max\{K_1, K_2, K_3\}$, and we have 
            \[ \forall k \geq K,\; \left|\frac{\occu{y}{[1,n_k]}{w}}{n_k} - \alpha_L \right| < \varepsilon. \]
        \end{proof}
        
        \begin{proof}[Proof of Proposition~\ref{prop:accumulation}]
        From the preceding lemma, we can deduce that for any $w \in \{\A,\B,\C,\D\}^*$, the sequence $\left(\frac{1}{n_k}\occu{y}{[1,n_k]}{w}\right)_{k \geq 0}$ is Cauchy, therefore it converges in $[0,1]$ as $k\to\infty$. Furthermore, we can also deduce that the sequence $(\alpha_L)_{L \geq 1}$ also converges as $L\to\infty$ to the same limit as the previous sequence.
        But since every $\alpha_L$ only depends on $w$, $\tau$ (and obviously on $L$), but not on $(n_k)$ nor on $y$, we can conclude that the limit $\nu([w])$ of $\left(\frac{1}{n_k}\occu{y}{[1,n_k]}{w}\right)_{k \geq 0}$  depends only on $\tau$ and $w$. 
        \end{proof}

        This ends the proof of Proposition~\ref{prop:accumulation}. But any $S$-invariant probability measure is completely determined by its values on the cylinder sets $[w]$, $w\in\{\A,\B,\C,\D\}^*$. Therefore, this proves the existence of a unique probability measure $\nu_\tau$ (depending only on $\tau$) such that 
        \begin{equation}
         \label{eq:cv_to_nutau}
         \forall y\in Y_\tau,\ \frac{1}{n} \sum_{j = 1}^n \delta_{S^j y} \tend{n}{\infty} \nu_\tau.
        \end{equation}
        This proves that $\nu_\tau$ is the unique $S$-invariant probability measure on $Y_\tau$ (see Section~\ref{sec:basic}).
        
    \subsection{Unique ergodicity in $X_\tau$}
    \label{sec:UE_X}
    
    Now we want to see how the unique ergodicity of $(Y_\tau,S)$ can be transferred to the corresponding subshift $(X_\tau,S)$ of smooth sequences of type $\tau$. For this, we first introduce the subset of well-aligned bi-infinite smooth sequences of type $\tau$
    \[
     B_{X_\tau} := \{x \in X_\tau \mid x_0 \neq x_{-1}\}.
    \]
    Since mono-symbol blocs of $x\in X_\tau$ have length 1 or 3, it is clear that for all $x\in B_{X_\tau}$, the \emph{return time} to $B_{X_\tau}$
    \[
       r_{B_{X_\tau}}(x) := \min \{ j\ge 1 \mid S^j x \in B_{X_\tau}\}
    \]
    is finite (in this precise case, it takes values $1$ or $3$). This allows us to consider the induced transformation $S_{B_{X_\tau}}: B_{X_\tau} \to B_{X_\tau}$, defined by
    \[
     \forall x\in B_{X_\tau},\ S_{B_{X_\tau}}(x) := S^{r_{B_{X_\tau}}(x)} (x).
    \]
    This induced transformation is a homeomorphism of $B_{X_\tau}$, and thus we have defined the \emph{induced dynamical system} $\bigl( B_{X_\tau}, S_{B_{X_\tau}} \bigr)$. Now, it is straightforward to see that the recoding map $\rec$ is a conjugacy between this induced system  and $(Y_\tau,S)$. As unique ergodicity is a conjugacy invariant, it follows immediately that the induced system $\bigl( B_{X_\tau}, S_{B_{X_\tau}} \bigr)$ is itself uniquely ergodic. Let us denote by $\mu_{B, \tau}$ the unique invariant probability measure on this induced system. 
    
    We now want to explain how the unique ergodicity of $(X_\tau, S)$ follows. Let $\mu$ be any $S$-invariant probability measure on $(X_\tau, S)$. We can see $B_{X_\tau}$ as the disjoint union $B_1 \sqcup B_3$ where $B_1$ (respectively $B_3$) is the set of well-aligned sequences of type $\tau$ whose return-time to $B_{X_\tau}$ is $1$ (respectively $3$). Since any smooth sequence of type $\tau$ is either well-aligned, or the shift of a well-aligned smooth sequence by 1 or 2, we can partition $X_\tau$ as follows:
     \begin{equation}
        \label{eq:partition}
        X_\tau = B_1 \sqcup B_3 \sqcup S B_3 \sqcup S^2{B_3}.
    \end{equation}
    We deduce from~\eqref{eq:partition} and by the $S$-invariance of $\mu$ that for all Borel subset $E\subset X_\tau$, we have 
    \begin{equation}
     \label{eq:mu}
    \begin{aligned}
     \mu(E) &= \mu(E\cap B_1) + \mu(E\cap B_3) + \mu(E\cap SB_3) + \mu(E\cap S^2B_3) \\
     &= \mu(E\cap B_1) + \mu(E\cap B_3) + \mu(S^{-1}E\cap B_3) + \mu(S^{-2}E\cap B_3). 
    \end{aligned} 
    \end{equation}

    In particular, we must have $\mu(B_{X_\tau}) > 0$ (otherwise we would have $\mu(X_\tau) = 0$ by the above formula). So we can consider the conditional probability measure $\mu_{B_{X_\tau}}$ defined, for all Borel subset $E\subset X_\tau$, by 
    \[
     \mu_{B_{X_\tau}}(E) := \mu(E\cap B_{X_\tau}) / \mu(B_{X_\tau}).
    \]
    It is classical (see \emph{e.g.} \cite[Section~2.3]{petersen1983}) that this conditional measure is invariant by the transformation induced on $B_{X_\tau}$. By unique ergodicity of the induced system, we must have $\mu_{B_{X_\tau}} = \mu_{B, \tau}$. Turning back to~\eqref{eq:mu}, we therefore have for all $E$
    \[
      \frac{\mu(E)}{\mu(B_{X_\tau})} =  \mu_{B, \tau}(E\cap B_1) + \mu_{B, \tau}(E\cap B_3) + \mu_{B, \tau}(S^{-1}E\cap B_3) + \mu_{B, \tau}(S^{-2}E\cap B_3).
    \]
    In particular, for $E = X_\tau$, we get 
    \[
     1 / \mu(B_{X_\tau}) = \mu_{B, \tau}(B_1) + 3 \mu_{B, \tau}(B_3),
    \]
    (which is just a particular case of Kac's lemma). Finally, we see that for all $E$, $\mu(E)$ is completely determined by values of the measure $\mu_{B, \tau}$, which proves the unique ergodicity of $(X_\tau, S)$. From now on, we denote by $\mu_\tau$ the unique $S$-invariant probability measure on $X_\tau$. 

     \subsection{A remark on unilateral smooth sequences, the column map $\Phi$ and reversal invariance}

    From the unique ergodicity of any $(X_\tau,S)$, we can conclude that any bi-infinite smooth sequence $x$ admits a frequency for any pattern $w$, which is given by $\mu_\tau([w])$ with $\tau = Types(x)$. Note that, in~\eqref{eq:def_freq}, we have defined  this frequency using only the positive side of $x$, but by unique ergodicity we also have 
    \[
        \mu_\tau([w]) = \overleftarrow{\mathrm{freq}}_x (w) := \lim_{n \to \infty} \frac{|x_{-n} \cdots x_{-1}|_w}{n}.
    \]
    
    Smooth sequences are, as presented in the introduction, usually studied as \emph{unilateral} sequences. 
    But those unilateral smooth sequences exactly correspond to the positive part of bi-infinite sequences which are infinitely well-aligned (\textit{i.e.}  in $\bigcap_{L \geq 1} B_X^L$).
    Reducing ourselves to those sequences, the notion of type can be way more easily characterised.
    
    \begin{proposition}
        \label{prop:type-unilat}
        For any given infinitely well-aligned smooth sequence $x \in \bigcap_{L \geq 1}B_X^L$, we have 
        $$\type(x) = \left\{ \begin{array}{cc}
            0 & \textup{if } x_0 = (\Delta x)_0 \\
            1 & \textup{otherwise} 
        \end{array} \right..$$
    \end{proposition}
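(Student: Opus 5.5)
We will use the parity characterization of the type from Remark~\ref{rem:type_and_parity}: $\type(x)=0$ exactly when the starting indices of blocks of $1$s have the same parity in $x$ and in $\Delta x$. The plan is to compare the parities of the blocks starting at index $0$ in both sequences. Both $x$ and $\Delta x$ are well-aligned, since $x \in B_X^1 \cap B_X^2$ (read through $\rec$). So in each of the two sequences a mono-symbol block starts at index $0$, and index $0$ is even.

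First, in $x$, the block starting at index $0$ is made of the symbol $x_0$. By Proposition~\ref{prop:index}, all blocks made of the symbol $x_0$ start at even indices, and all blocks made of the other symbol start at odd indices. Second, $\Delta x$ is smooth and its block starting at index $0$ is made of $(\Delta x)_0$. The same proposition gives that blocks of $(\Delta x)_0$ in $\Delta x$ start at even indices and blocks of the other symbol start at odd ones.

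We then split into two cases. If $x_0 = (\Delta x)_0$, then blocks of $1$s start at indices of the same parity in $x$ and in $\Delta x$: both even if $x_0=1$, both odd if $x_0=3$. Remark~\ref{rem:type_and_parity} then gives $\type(x)=0$. If $x_0 \neq (\Delta x)_0$, the parities are opposite and $\type(x)=1$.

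The main point requiring care is the alignment convention in Remark~\ref{rem:type_and_parity}. The parity comparison there must be made with the indexing of $\Delta x$ given by the origin convention (the block containing $x_0$ has index $0$), and not up to a shift. If we only trust the block-expansion description in that remark, we can argue directly instead. Suppose $x_0=(\Delta x)_0=1$. Then the block at the origin of $x$ is a single $1$. It is the first element of the expansion of the block $1$ or $111$ of $\Delta x$ at the origin, which is itself well-aligned. That expansion is $1$ or $131$, which is the type-$0$ pattern. A type-$1$ expansion would give $313$, starting with $3$. The other three combinations of $(x_0,(\Delta x)_0)$ are checked the same way using $333\,111\,333$ versus $111\,333\,111$. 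For this we only need that the expansion pattern forced by the type appears at the origin, and that holds by Corollary~\ref{cor:types}, because the whole sequence has a single type.
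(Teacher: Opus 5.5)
Your main argument is correct and is essentially the paper's own proof. Since $x$ and $\Delta x$ are both well-aligned, a block starts at index $0$ in each, and Proposition~\ref{prop:index} with Remark~\ref{rem:type_and_parity} gives $\type(x)=0$ iff $x_0=(\Delta x)_0$; the remark's parity criterion is legitimate here precisely because $x$ itself is well-aligned, so the $j$-th block of $x$ starts at an index $\equiv j \pmod 2$, which settles the alignment concern you raised.

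Your fallback block-expansion check is unnecessary and, as written, incomplete. Corollary~\ref{cor:types} only concerns the three-letter elementary blocks, so when the run of $\Delta x$ at the origin has length $1$ you would instead need the single-letter rules of the substitutions (e.g.\ $\varphi_0(\A)=\A$ versus $\varphi_1(\A)=\B$), that is, the parity structure again.
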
 
    
    \begin{proof}
    This is a direct consequence of Remark~\ref{rem:type_and_parity}: if both $x$ and $\Delta x$ are well-aligned, 0 is the starting index of a mono-symbol block in $x$ as well as in $\Delta x$. Then $\type(x) = 0$ if and only if these blocks are made of the same symbol.
    \end{proof}
    
    For those infinitely well-aligned words, we introduced at the begining the \emph{column sequence} $\Phi(x) := \bigl(\Delta^k(x)_0\bigr)_{k\geq 0}$. With such a characterisation of the type, we can fully obtain $Types(x)$ based on $\Phi(x)$.
    But even more than that, we have the following result:
    
    \begin{proposition}
        \label{prop:swapped-column}        
        For any given infinitely well-aligned $x \in \bigcap_{L \geq 1} B_X^L$, we can construct another, infinitely well-aligned, smooth sequence $\tilde{x} := \Phi^{-1}\left(\overline{\Phi(x)}\right)$ where $\overline{\Phi(x)}$ is the column sequence deduced from $\Phi(x)$ by switching 1s and 3s. 
        Then, $x$ and $\tilde{x}$ have the same frequencies for any pattern.
    \end{proposition}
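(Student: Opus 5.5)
The plan is to show that $x$ and $\tilde{x}$ have the \emph{same type sequence}, and then to conclude by the unique ergodicity of $(X_\tau,S)$ established in Section~\ref{sec:UE_X}. Concretely, I will check three things in order: that $\tilde x$ is well defined as an infinitely well-aligned element of $X$; that $\Types(\tilde x)=\Types(x)$; and that this forces equality of all pattern frequencies.

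\emph{Step 1: $\tilde x$ is a genuine infinitely well-aligned smooth sequence.} By \cite{BJP08}, $\Phi$ is a bijection between one-sided smooth sequences over $\{1,3\}$ and $\{1,3\}^\NN$. Hence $\Phi^{-1}\bigl(\overline{\Phi(x)}\bigr)$ is a well-defined one-sided smooth sequence. I then extend it to the left using the procedure described before the definition of $X$: at each derivation level $k$, set $(\Delta^k\tilde x)_{-1}$ to be the complement of $\overline{\Phi(x)}_k$ and reconstruct the negative indices level by level. This gives an element of $X$ whose derivatives all start a mono-symbol block at index $0$, i.e.\ an element of $\bigcap_{L\ge1}B_X^L$. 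This is the step I expect to need the most care. One must check that the left extension produces a bi-infinite sequence that is truly smooth, with all derivatives over $\{1,3\}$, and that the indexing convention for $\Delta$ on bi-infinite sequences matches the one-sided derivatives. Following the paper, I would treat this as a consequence of the unique left-extension argument, applied simultaneously to all levels.

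\emph{Step 2: equality of types.} For each $k$, the sequence $\Delta^k x$ is again infinitely well-aligned and satisfies $(\Delta^k x)_0=\Phi(x)_k$. So Proposition~\ref{prop:type-unilat} applied to $\Delta^k x$ gives
\[
\type(\Delta^k x)=0 \iff \Phi(x)_k=\Phi(x)_{k+1}.
\]
The same holds for $\tilde x$, with $\Phi(\tilde x)=\overline{\Phi(x)}$. Since swapping $1\leftrightarrow 3$ is a bijection, it preserves equality of consecutive letters: $\overline{\Phi(x)}_k=\overline{\Phi(x)}_{k+1}$ if and only if $\Phi(x)_k=\Phi(x)_{k+1}$. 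Therefore $\Types(\tilde x)=\Types(x)=:\tau$, and both $x$ and $\tilde x$ belong to $X_\tau$.

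\emph{Step 3: conclusion.} By the unique ergodicity of $(X_\tau,S)$, every point of $X_\tau$ is generic for $\mu_\tau$ (Section~\ref{sec:basic}). So for every finite word $w$ we have $\mathrm{freq}_x(w)=\mu_\tau([w])=\mathrm{freq}_{\tilde x}(w)$. These frequencies are computed on the positive side, which is exactly the one-sided smooth sequence, so the statement holds for the unilateral sequences as well.
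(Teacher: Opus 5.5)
Your proof is correct and follows the paper's argument. The paper's proof is exactly your Step~2: apply Proposition~\ref{prop:type-unilat} at every derivation level and note that swapping $1\leftrightarrow 3$ preserves equality of consecutive column letters, so $\Types(\tilde x)=\Types(x)$. Your Step~3 appeal to the unique ergodicity of $(X_\tau,S)$ is left implicit in the paper, and your Step~1 raises no real difficulty: the left half of the extension, read from right to left, is just the one-sided smooth sequence with the complemented column sequence, so it exists and is smooth because $\Phi$ is a bijection.
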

    
    \begin{proof}
        The proof is immediate as soon as we observe that, by Proposition~\ref{prop:type-unilat}, $x$ and $\tilde{x}$ have the same sequence of types.
    \end{proof}
    
    From a unilateral perspective, those two sequences would seem unrelated. However, in the context of bi-infinite smooth sequences, 
    we see that $x$ and $\tilde{x}$ correspond in fact to the same bilateral sequence, but either read from left to right or the other way around: for all $n \in \ZZ$, we have $\tilde{x}_n = x_{-1-n}$ (see Figure~\ref{fig:reversal}).

 \begin{figure}[ht]
 \centering
 \includegraphics{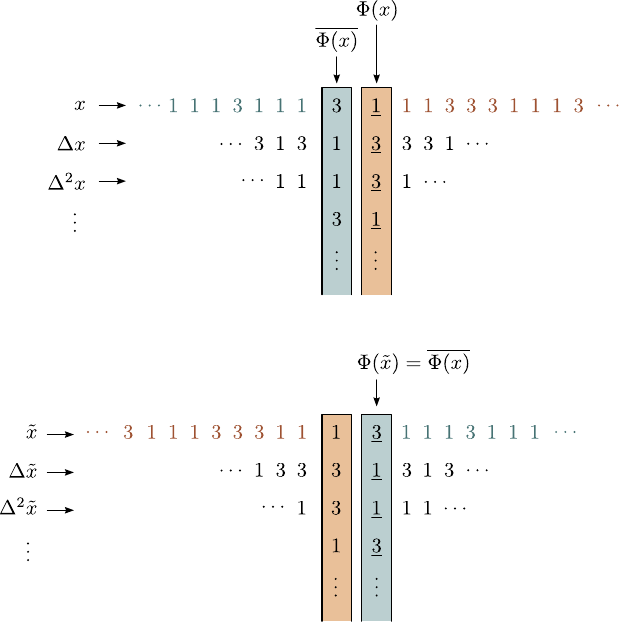}
 \caption{An infinitely well-aligned smooth sequence $x$, its column sequence $\Phi(x)$, and the reversal $\tilde x$ whose column sequence is $\overline{\Phi(x)}$ (that is: it is constructed from $\Phi(x)$ by switching 1s and 3s).}
 \label{fig:reversal}
 \end{figure}
    
    This results in the following corollary on the reversal invariance of the measure $\mu_\tau$:
    
    \begin{corollary}
     \label{cor:reversal}
     For any $\tau \in \{0,1\}^\NN$ and any finite word $w = w_0\dots w_{\ell-1} \in \{0, 1\}^*$, we have 
     \[
     \mu_\tau\left(\left[\overleftarrow{w}\right]\right) = \mu_\tau([w]),     \]
     where $\overleftarrow{w} := w_{\ell-1}\dots w_0$.
    \end{corollary}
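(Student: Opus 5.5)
The plan is to show that the reversal map $R:\{1,3\}^\ZZ\to\{1,3\}^\ZZ$, $(Rx)_n := x_{-1-n}$, maps $X_\tau$ into itself. Then $R_*\mu_\tau$ is an invariant measure on $X_\tau$, and unique ergodicity yields the corollary.

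First, we check that $R$ behaves well with respect to the shift and to invariant measures. We have $R\circ S = S^{-1}\circ R$, so for any $S$-invariant $\mu$, the pushforward $R_*\mu$ is again $S$-invariant: $R_*\mu(S^{-1}A)=\mu(R^{-1}S^{-1}A)=\mu(S R^{-1}A)=\mu(R^{-1}A)$. Moreover, the cylinder correspondence is $R^{-1}[w] = \{x : x_{-1-i}=w_i,\ 0\le i<\ell\} = S^{\ell}[\overleftarrow{w}]$. Hence, by shift invariance, $R_*\mu([w]) = \mu([\overleftarrow{w}])$.

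Second, we show $R(X_\tau)\subset X_\tau$. Reversing a sequence reverses its sequence of mono-symbol blocks. Therefore $Rx$ is differentiable whenever $x$ is, and $\Delta(Rx)$ is, up to a shift, the reversal of $\Delta x$; the shift comes from the origin convention. Inductively, every derivative of $Rx$ is a shifted reversal of the corresponding derivative of $x$, so $Rx$ is smooth.

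For the type, we use Remark~\ref{rem:type_and_parity}. By that remark, $\type(x)=0$ exactly when blocks $111$ of $\Delta x$ expand to $131$ (equivalently, blocks $333$ expand to $333111333$). These factors are palindromes, and reversal maps a block $111$ of $\Delta x$ to a block $111$ of $\Delta(Rx)$ whose expansion is the reversed factor. The expansion pattern is therefore unchanged, so $\type(Rx)=\type(x)$. The same argument applies to every derivative, since type is shift-invariant, which gives $\Types(Rx)=\Types(x)$.

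Alternatively, the type can be read on the recoding. Recoding commutes with reversal, and the elementary blocks $\A\B\A$, $\D\C\D$ (respectively $\B\A\B$, $\C\D\C$) are palindromes. Hence $\rec(Rx)$ contains a type-$0$ block if and only if $\rec(x)$ does. This version is cleaner, and I would present it this way.

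Finally, $R_*\mu_\tau$ is an $S$-invariant probability measure on $X_\tau$, so by unique ergodicity of $(X_\tau,S)$ (Section~\ref{sec:UE_X}) we get $R_*\mu_\tau=\mu_\tau$. Combined with the first step, this gives $\mu_\tau([\overleftarrow{w}])=\mu_\tau([w])$.

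The main obstacle is the bookkeeping in the second step. One has to make precise that $\Delta^k(Rx)$ equals $S^{j_k}R(\Delta^k x)$ for some integer $j_k$, and that type is invariant under shifts. Both facts are straightforward once the origin convention is handled.

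Note that the statement's alphabet $\{0,1\}^*$ should read $\{1,3\}^*$; the argument works verbatim. The argument could also be routed through Proposition~\ref{prop:swapped-column} applied to a generic infinitely well-aligned $x$, combined with the equality of forward and backward frequencies, but the direct reversal argument avoids that detour.
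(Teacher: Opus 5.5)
Your argument is correct, but it takes a different route from the paper. The paper's proof takes one infinitely well-aligned $x$ with $\Types(x)=\tau$ and its partner $\tilde x=\Phi^{-1}(\overline{\Phi(x)})$. It gets $\Types(\tilde x)=\tau$ from Proposition~\ref{prop:type-unilat} and uses the identity $\tilde x_n=x_{-1-n}$, which the paper only illustrates with a figure. It then chains $\mu_\tau([\overleftarrow{w}])=\mathrm{freq}_x(\overleftarrow{w})=\overleftarrow{\mathrm{freq}}_{\tilde x}(w)=\mathrm{freq}_{\tilde x}(w)=\mu_\tau([w])$, where the equality of backward and forward frequencies comes from unique ergodicity. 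This is exactly the detour you mention at the end. You instead show that the flip $R$ conjugates $S$ to $S^{-1}$ and maps $X_\tau$ into itself, so $R_*\mu_\tau$ is an invariant measure on $X_\tau$ and must equal $\mu_\tau$. The only combinatorial input is that $\Delta\circ R$ agrees with $R\circ\Delta$ up to a shift, together with the palindromic shape of $\A\B\A,\D\C\D,\B\A\B,\C\D\C$. Your version is self-contained: it needs neither the column map, nor infinitely well-aligned sequences, nor the figure-based identification of $\tilde x$ with the reversal of $x$, nor backward genericity. It also yields the structural fact $R(X_\tau)=X_\tau$, since $R$ is an involution. The paper's version instead ties the result to the unilateral picture, where swapping $1$s and $3$s in $\Phi(x)$ realizes the reversal, and to Proposition~15 of~\cite{BJP08}. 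One point to make explicit in your recoding step: any occurrence of the factor $\A\B\A$ or $\D\C\D$ in $\rec(x)$ is automatically an elementary block. Indeed, three consecutive letters from $\{\A,\B\}$ (resp.\ $\{\C,\D\}$) produce three consecutive $1$s (resp.\ $3$s) in $\Delta x$, which must form a full block. So the type is determined by which factors occur, and is therefore visibly preserved by reversal.
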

    
    \begin{proof}
    Let $x$ be an infinitely well-aligned smooth sequence whose sequence of types is $\tau$, and $\tilde x$ its reversal as described above. (In fact, from Proposition~\ref{prop:type-unilat}, there are exactly two infinitely well-aligned smooth sequence corresponding to the sequence of types $\tau$: $x$ and $\tilde x$.) Then we have the following equalities:
    \[
     \mu_\tau \left(\left[\overleftarrow{w}\right]\right) 
     = \mathrm{freq}_x \left(\overleftarrow{w}\right) 
     = \overleftarrow{\mathrm{freq}}_{\tilde x} \left({w}\right)
     = {\mathrm{freq}}_{\tilde x} \left({w}\right)
     = \mu_\tau([w]).
    \]
    \end{proof}

This result complements Proposition~15 of~\cite{BJP08}, in which the authors prove that the set of factors of any (unilateral) smooth sequence is closed under reversal.    
  
\section{Minimality}
\label{sec:minimal}
  Once we know that $(Y_\tau, S)$ and $(X_\tau, S)$ are uniquely ergodic for all $\tau$, the question of the minimality of these systems arises naturally. We recall the following classical result (see \emph{e.g.} \cite[Theorem~6.17]{Walters1982}): if $(X, T)$ is a uniquely ergodic topological dynamical system, with unique invariant probability measure $\mu$, then $(X, T)$ is minimal if and only if $\mu(U) > 0$ for all non-empty open set $U\subset X$. In the context of a symbolic dynamical system, this condition is equivalent to saying that the unique invariant measure charges every cylinder $[w]$ for all finite words $w$ in the language of the subshift. We have therefore to study whether, for a given $\tau\in\{0,1\}^\NN$, the following is true: 
  \begin{equation}
   \label{eq:minimality_condition}
   \forall w\in \mathcal{L}(Y_\tau),\ \nu_\tau([w]) > 0.
  \end{equation}

  \subsection{Non minimal cases}
  Our first observation is that this minimality condition does not always hold, as shows the following proposition.
  
  \begin{proposition}
   \label{prop:1111}
   For $\tau = 1^\infty = (1, 1, 1, \ldots)$, $(Y_\tau, S)$ is not minimal.
  \end{proposition}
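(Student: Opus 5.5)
For $\tau = 1^\infty$, I will show that the minimality criterion \eqref{eq:minimality_condition} fails. The candidate word is the single letter $\D$. It belongs to $\mathcal{L}(Y_\tau)$, but I expect $\nu_\tau([\D]) = 0$. By the classical characterization recalled above (unique ergodicity together with an uncharged nonempty cylinder), this is enough to conclude that $(Y_\tau,S)$ is not minimal.

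\emph{Step 1: $\D$ is in the language.} The construction in the proof of Proposition~\ref{prop:nonempty} produces an element $y^0 \in Y_\tau$ whose symbol of index $0$ is $\D$. Indeed, every $w_n^L$, and hence every limit $y^L$, has a $\D$ at the origin. So $[\D]\cap Y_\tau \neq \emptyset$.

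\emph{Step 2: the frequency of $\D$ vanishes.} Take any $y\in Y_\tau$ and use the notation $d^L$ from the induction on letter frequencies. Since $\tau_L = 1$ for every $L$, the fourth line of \eqref{eq:system1} gives $|w_k^L|_\D = |w_k^{L+1}|_\D + O(1)$. This holds because $\D$ occurs in $\varphi_1(\S)$ only when $\S = \D$, and then exactly once. From the proof of Proposition~\ref{prop:BLY} we also have
\[ |w_k^L| = |w_k^{L+1}|\bigl(1+2(c^{L+1}+d^{L+1})+o(1)\bigr). \]
Dividing the first relation by $|w_k^L|$ and letting $k\to\infty$ yields
\[ d^L = \frac{d^{L+1}}{1+2(c^{L+1}+d^{L+1})} \le \frac{2}{3}\, d^{L+1}, \]
where the inequality uses Remark~\ref{rem:c+d}, namely $c^{L+1}+d^{L+1}\ge 1/4$. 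Iterating $L$ times and using $d^L\le 1/2$ gives $d^0 \le \frac12 (2/3)^L$ for every $L$, so $d^0 = 0$. By Corollary~\ref{cor:symbol_frequency} and \eqref{eq:cv_to_nutau}, $d^0 = \nu_\tau([\D])$, and therefore $\nu_\tau([\D]) = 0$.

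\emph{Conclusion and main point of care.} The only subtle point is checking that the $O(1)$ boundary errors and the $o(1)$ terms disappear in the limit, which they do because $|w_k^L|\to\infty$ by Lemma~\ref{lemma:Sigma_min}. Alternatively, one can simply read off $d(1^\infty) = 0$ from the fixed point of $h_1$ together with \eqref{eq:undemi}. With Steps 1 and 2 in hand, the open set $[\D]\cap Y_\tau$ is nonempty but has $\nu_\tau$-measure zero. Hence \eqref{eq:minimality_condition} fails, and $(Y_\tau,S)$ is not minimal.
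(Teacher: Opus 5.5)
Your proof is correct, but the key step takes a different route from the paper's.

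\textbf{The paper's route.} The paper builds $\bar y$ by iterating $S\circ\varphi_1$ from $\underline{\D}$; this is exactly the element your Step~1 extracts from the nonemptiness construction when $\tau=1^\infty$. It gets $\bar y\in Y_{1^\infty}$ from Proposition~\ref{prop:elements_of_Ytau}. It then observes combinatorially that $\D$ occurs \emph{only once} in $\bar y$, since $\varphi_1$ produces a $\D$ only from a $\D$, and then exactly once. Genericity of $\bar y$ for $\nu_{1^\infty}$ then forces $\nu_{1^\infty}([\D])=0$.

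\textbf{Your route.} You use only that $\bar y_0=\D$. You obtain $\nu_{1^\infty}([\D])=0$ analytically from the recursion $d^L = d^{L+1}/\bigl(1+2(c^{L+1}+d^{L+1})\bigr)\le\frac23\, d^{L+1}$. This amounts to $b(1^\infty)=\frac12$: the fixed point $\bigl(\frac{3-\sqrt5}{4},\frac12\bigr)$ of $h_1$ lies on the side $D$ of $K$, which is item~1 in the proof of Lemma~\ref{lemma:single-letter}. Both arguments rest on the same fact about $\varphi_1$. The paper uses it to count occurrences in one specific sequence; you use it to get a linear relation between successive frequencies.

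\textbf{What each buys.} The paper's argument is more elementary: apart from unique ergodicity, it needs nothing from Section~\ref{sec:homographies}. It also exhibits $\bar y$ as a point carrying an isolated defect, and that is precisely what the proof of Corollary~\ref{cor:nonminimal} transports to other $\tau$ through the commutation formula. Your argument leans on Remark~\ref{rem:c+d} and on the length recursion from Proposition~\ref{prop:BLY}. In return it gives a uniform quantitative statement, valid for every $y\in Y_{1^\infty}$: the density of $\D$ is at most $\frac12(2/3)^L$ for every $L$. It also avoids having to locate the occurrences of $\D$ in any particular sequence.
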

  
  \begin{proof}
   The argument exploits a crucial property of $\varphi_1$ acting on letters: the only way to get a $\D$ starting from a single letter is when this initial letter is itself a $\D$. Let us consider the following sequence of finite words, whose letters are indexed by increasing subintervals of $\ZZ$ (the underlined term indicates the zero position):
   \begin{align*}
    & \underline{\D} \\
    \C &\underline{\D} \C\\    
    \B \A \B \C &\underline{\D} \C \B \A \B \\
    \C \B \C \B \A \B \C &\underline{\D} \C \B \A \B \C \B \C \\
    &\vdots
   \end{align*}
These words are produced starting from an initial $\D$ at the zero position and iterating the map $S\circ \varphi_1$, so that they converge to a bi-infinite alternating sequence $\bar y \in \mathcal{A}^\ZZ$ satisfying $\bar y = S \circ \varphi_1 (\bar y)$. Moreover, this limit point $\bar y$ also enjoys the following properties:
\begin{itemize}
 \item $\bar y \in Y_{1^\infty}$ (this follows from Proposition~\ref{prop:elements_of_Ytau}) ;
 \item the letter $\D$ appears only once in $\bar y$, at the zero position.
\end{itemize}
It follows that the single-letter word $\D$ is in $\mathcal{L}(Y_{1^\infty})$, but as $\bar y$ is generic for the probability measure $\nu_{1^\infty}$, we must have $\nu_{1^\infty}([\D]) = 0$. This proves that the condition~\eqref{eq:minimality_condition} is not satisfied when $\tau = 1^\infty$.
  \end{proof}

\begin{corollary}
 \label{cor:nonminimal}
 If for some $L \ge 0$ we have $S^L\tau = 1^\infty$ (that is: if $\tau$ contains only finitely many $0$s), then $(Y_\tau, S)$ is not minimal.
\end{corollary}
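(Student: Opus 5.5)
The strategy is to transport the non-minimality from level $L$ down to level $0$. We use the fact that $Y_{S^L\tau}$ is non-minimal (Proposition~\ref{prop:1111}), and that $Y_\tau$ is obtained from $Y_{S^L\tau}$ through the substitution $\varphi_{\tau_0}\circ\cdots\circ\varphi_{\tau_{L-1}}$.

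We start from the sequence $\bar y\in Y_{1^\infty}=Y_{S^L\tau}$ constructed in the proof of Proposition~\ref{prop:1111}. It contains exactly one $\D$, located at the zero position. We then build $z := \varphi_{\tau_0}\circ\cdots\circ\varphi_{\tau_{L-1}}(\bar y)$. The intermediate sequences $y^\ell := \varphi_{\tau_\ell}\circ\cdots\circ\varphi_{\tau_{L-1}}(\bar y)$ for $\ell<L$, together with $y^\ell := S^{\ell-L}$-iterates of $\bar y$ via $\bar y=S\varphi_1(\bar y)$ for $\ell\ge L$, form a family of alternating sequences. These sequences are alternating because the images under $\varphi_0$ and $\varphi_1$ of alternating sequences are alternating, which one checks letter by letter. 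The family satisfies~\eqref{eq:yL}, so Proposition~\ref{prop:elements_of_Ytau} gives $z\in Y_\tau$.

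Next we locate a word in $\mathcal{L}(Y_\tau)$ of zero $\nu_\tau$-measure. Let $u$ be a word of $z$ that ``codes'' the unique $\D$ of $\bar y$. Concretely, set $u := \D_L = \varphi_{\tau[0,L)}(\D)$ together with enough surrounding context. The context must ensure that the decomposition of $z$ into blocks $\S_L$ is recognizable around this occurrence: any occurrence of $u$ in a sequence of $Y_\tau$ should force, after $L$ derivations, an occurrence of $\D$ in $\Delta^L$ of that sequence. The simplest robust choice uses derivatives directly rather than recognizability. Take $u := z_{[-N,N]}$ for $N$ large, say $N\ge 3^{L+1}$. Since $\Delta^L$ is computed by local rules with a radius bounded in terms of $L$, any $y'\in Y_\tau$ with $y'_{[-N,N]}=u$ has $(\Delta^L y')_0$ equal to $(\Delta^L z)_0=\D$, up to the bounded alignment shift. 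Thus $\Delta^L y'$ contains a $\D$ near the origin.

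To show $\nu_\tau([u])=0$, we use that $z$ is generic for $\nu_\tau$ by unique ergodicity. It therefore suffices to count occurrences of $u$ in $z$. Each occurrence at position $j$ yields, via the commutation formula (Proposition~\ref{prop:commutation}), an occurrence of $\D$ in $\Delta^L z$ at position $\Sigma^L_z(j)$ up to bounded error. Moreover, distinct occurrences $j$ map to at most $3^L$-to-one positions, by Lemma~\ref{lemma:Sigma_min}. Now $\Delta^L z$ is a shift of $\bar y$, by Proposition~\ref{Prop prim} applied to $z$ together with $\Delta^L\circ\varphi_{\tau[0,L)}=\mathrm{id}$. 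Since $\bar y$ has a single $\D$, $u$ occurs only finitely many times in $z$, and so $\mathrm{freq}_z(u)=0$. Thus $u\in\mathcal{L}(Y_\tau)$ while $\nu_\tau([u])=0$, so condition~\eqref{eq:minimality_condition} fails and $(Y_\tau,S)$ is not minimal.

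The main obstacle is the locality claim: that a long enough window of $y'$ determines $(\Delta^L y')_0$, including its alignment. This needs care because the elementary-block decomposition is only determined locally once a window contains a full block boundary. I would justify it by induction on $L$: one derivation is determined by a window of radius $3$ with a shrinkage of the window by a factor of at most $1$, so a window of radius $N$ in $y'$ determines $\Delta y'$ on a window of radius roughly $(N-3)/3$. Alternatively, one can bypass locality entirely. An easier route is to note that $\nu_\tau([u])>0$ would imply, by Lemma~\ref{lemma:DeltaLygeneric}, a positive $(\Delta^L)_*$-mass on $[\D]$ for $\nu_{S^L\tau}=\nu_{1^\infty}$, which contradicts $\nu_{1^\infty}([\D])=0$.
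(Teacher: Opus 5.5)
Your main argument is correct and is essentially the paper's proof: the same $\tilde y=\varphi_{\tau_0}\circ\cdots\circ\varphi_{\tau_{L-1}}(\bar y)$ with $\Delta^L\tilde y=\bar y$, the same window $\tilde y_{[-m,m]}$ supplied by continuity (locality) of $\Delta^L$, and the same use of the commutation formula with Lemma~\ref{lemma:Sigma_min} to see that this window occurs only at positions $n<3^L$, so genericity gives $\nu_\tau([w])=0$ for a word $w\in\mathcal{L}(Y_\tau)$. One caveat: your closing ``alternative'' does not bypass locality, since passing from $\nu_\tau([u])>0$ to positive mass of $\{y:(\Delta^L y)_0=\D\}$ via Lemma~\ref{lemma:DeltaLygeneric} needs exactly the inclusion $[u]\subset\{y:(\Delta^L y)_0=\D\}$ (plus a transfer from $[u]$ to $[u]\cap B^L_{Y_\tau}$), so that remark should be dropped.
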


\begin{proof}
 Let $\tau$ and $L$ be as in the statement of the corollary, and let $\bar y$ be as in the proof of the preceding proposition.  We define
 \[ \tilde y := \varphi_{\tau_0} \circ \cdots \circ \varphi_{\tau_{L-1}} \, \bar y \ \in Y_\tau. \]
 Then $\bar y = \Delta^L(\tilde y)$. Now, as $\bar  y_0 = \D$, by continuity of $\Delta^L$ there exists some $m \ge 1$ such that
 \[
  \forall y \in Y_\tau,\ y_{[-m, m]} = \tilde y_{[-m, m]} \Longrightarrow \left(\Delta^L y\right)_0 = \D.
 \]
 In particular, taking into account the commutation formula, if $w := \tilde y_{[-m, m]}$ appears in $\tilde y$ at position $[n-m, n+m]$ for some $n \ge 0$, we must have 
 \[
  \left(\Delta^L S^n \tilde y\right)_0 = \bar y_{\Sigma_{\tilde y}^L(n)} = \D.
 \]
 Since $\D$ appears only at position $0$ in $\bar y$, this implies that $\Sigma_{\tilde y}^L(n) = 0$, and then that $n \le 3^n$. Therefore by genericity of $\tilde y$ for $\nu_\tau$, we get that  $\nu_\tau([w]) = 0$. But $w \in \mathcal{L}(Y_\tau)$. As in the preceding proof, we can conclude that~\eqref{eq:minimality_condition} is not satisfied for~$\tau$.  
\end{proof}

\subsection{Other cases are minimal}

Now we want to show that, except for the cases discussed in the previous section, the subshift $(Y_\tau, S)$ is minimal. 

\begin{theorem}
 \label{thm:minimal}
 For all $\tau \in \{0, 1\}^\NN$, the subshift $(Y_\tau, S)$ is minimal if and only if $\tau$ contains infinitely many $0$s. 
\end{theorem}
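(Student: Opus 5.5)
The "only if" direction is Corollary~\ref{cor:nonminimal}, so it remains to show that $(Y_\tau,S)$ is minimal whenever $\tau$ contains infinitely many $0$s. Since $(Y_\tau,S)$ is uniquely ergodic, it suffices to verify condition~\eqref{eq:minimality_condition}: every $w\in\mathcal{L}(Y_\tau)$ satisfies $\nu_\tau([w])>0$. The plan has three steps.

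\textbf{Step 1: a fixed word of positive frequency.} By Corollary~\ref{cor:symbol_frequency} and Remark~\ref{rem:c+d}, we have $c^L+d^L\ge 1/4$ for all $L$. So at every level $L$, some letter of $\{\C,\D\}$ has frequency at least $1/8$ in $\Delta^L y$. I would refine this to a usable statement: whenever $\tau_L=0$, the letter $\D$ has frequency at least some fixed $\delta>0$ in $\Delta^L y$.
\begin{itemize}
\item For $\tau_L=0$, the system~\eqref{eq:system0} gives $|w^L|_\D = |w^{L+1}|_\B + 2|w^{L+1}|_\D + O(1)$, hence $d^L\,(3-2(a^{L+1}+b^{L+1})) = b^{L+1}+2d^{L+1}$.
\item Bounding $b^{L+1}+2d^{L+1}$ from below needs some care, since $b^{L+1}+d^{L+1}=1/2$ gives $b^{L+1}+2d^{L+1}=\tfrac12+d^{L+1}\ge\tfrac12$.
\item Since the denominator is at most $3$, this yields $d^L\ge 1/6$ whenever $\tau_L=0$.
\end{itemize}

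\textbf{Step 2: reduce an arbitrary word to a substituted block.} Let $w\in\mathcal{L}(Y_\tau)$ occur in some $y\in Y_\tau$.
\begin{itemize}
\item By Proposition~\ref{Prop prim}, $y$ is a shift of $\varphi_{\tau[0,L)}(\Delta^L y)$.
\item The key claim is that for large $L$ with $\tau_L=0$, the occurrence of $w$ lies inside $\varphi_{\tau[0,L)}(u)$ for some short word $u$ around the origin of $\Delta^L y$, with $u\in\mathcal{L}(Y_{S^L\tau})$.
\item Pick $L'>L$ with $\tau_{L'}=0$. It then suffices to show that $u$ sits inside $\varphi_{\tau[L,L')}(\D)$, or inside $\varphi_{\tau[L,L')}(v)$ for a word $v$ from a finite list of length-$\le 3$ words that all have positive frequency at level $L'$. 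Then $w$ occurs in $\varphi_{\tau[0,L')}(\D)$.
\end{itemize}

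\textbf{Step 3: transfer the frequency back down.} Suppose $w$ occurs in $\varphi_{\tau[0,L')}(\S)$ for some letter $\S$ with $s^{L'}>0$. Arguing as in Lemma~\ref{lemma:freq-general}, each occurrence of $\S$ in $\Delta^{L'}y$ produces an occurrence of $w$ in $y$. Therefore
\[
\nu_\tau([w])\ \ge\ \nu_\tau\bigl(B^{L'}_{Y_\tau}\bigr)\, s^{L'}\ >\ 0,
\]
where $\nu_\tau(B^{L'}_{Y_\tau})>0$ by Proposition~\ref{prop:BLY}.

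\textbf{Main obstacle.} The hard part is Step 2: showing that every legal finite word eventually lies inside the image of a single letter of positive frequency, rather than straddling block boundaries in a way that only occurs finitely often. This straddling is exactly what happens for $1^\infty$ with the isolated $\D$.

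I would first track the letter $\D$. The map $\varphi_0$ sends every letter to a word containing $\A$ or $\D$, and $\varphi_0(\B)=\D$, $\varphi_0(\D)=\D\C\D$. So after one $0$-step every elementary block touches $\D$-images, and $\varphi_1(\D)=\C\D\C$ keeps $\D$. Combining this with the parity constraint (Corollary~\ref{cor:types}), I would show that two consecutive letters of $\Delta^{L'}y$ around the origin are contained in $\varphi_{\tau[L,L')}(\D)$ once $\tau[L,L')$ contains two $0$s.

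This uses the fact that $\varphi_0\circ\varphi_t(\D)$ already contains every length-2 alternating legal factor of the relevant type. That is a finite check, and it is where the hypothesis of infinitely many $0$s enters.
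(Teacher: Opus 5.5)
\textbf{Comparison with the paper.} Your route differs from the paper's. The paper inducts on $|w|$. Its base case is Lemma~\ref{lemma:single-letter} (every letter is charged, via the action of $h_0,h_1$ on the sides of $K$), and its induction step is Lemma~\ref{lemma:induction_step} ($w$ is a factor of $\varphi_{\tau[0,L)}(w')$ for a shorter legal $w'$). You instead try to place every legal word inside a single block $\varphi_{\tau[0,L')}(\D)$, and you use the explicit bound
\[
d^{L'}=\frac{1-b^{L'+1}}{3-2(a^{L'+1}+b^{L'+1})}\ge \frac16 \quad\text{whenever } \tau_{L'}=0.
\]
Steps~1 and~3 are correct. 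If Step~2 were complete, this would replace both the boundary analysis and the induction by one uniform estimate.

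Your remark on where the hypothesis enters is misplaced. Every legal length-2 word at level $L$ is a factor of $\varphi_{\tau_L}\circ\varphi_{\tau_{L+1}}(\D)$ for \emph{all} values of $\tau_L,\tau_{L+1}$; this is the first part of Lemma~\ref{lemma:induction_step}. Also, $\D$ is a factor of $\varphi_t(\D)$ for both $t$. So infinitely many $0$s are needed only to get $d^{L'}>0$. For $\tau=1^\infty$, every legal word of length $\le 2$ lies in $\varphi_1^2(\D)$, but $d^{L'}=0$.

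\textbf{The gap is in Step~2.} Bounding $|u|$ requires the blocks $\varphi_{\tau[0,L)}(\A)$, $\ldots$, $\varphi_{\tau[0,L)}(\D)$ to become long. This fails for $\A$ when $\tau$ ends with $0^\infty$: since $\varphi_0(\A)=\A$, the length $|\varphi_{\tau[0,L)}(\A)|$ is eventually constant. An occurrence of $w$ can then straddle three blocks $\varphi(\D)\varphi(\A)\varphi(\D)$ at \emph{every} level.

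For example, take $\tau=0^\infty$ and the fixed point $\bar y=\varphi_0(\bar y)$ generated from $\D\underline{\A}\D$. It lies in $Y_{0^\infty}$ by Proposition~\ref{prop:elements_of_Ytau}. Its occurrence of $\D\A\D$ at positions $-1,0,1$ is, at every level $L$, the image of the three-letter word $\D\A\D$, never of one or two letters.

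Your treatment of the main obstacle only handles two consecutive letters. The fallback, ``a finite list of length-$\le 3$ words that all have positive frequency at level $L'$'', is unsupported: positivity for such words is an instance of what you are trying to prove.

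\textbf{How to close it.} You need the check carried out at the end of Lemma~\ref{lemma:induction_step}. The legal length-3 words with central $\A$ in $Y_{0^\infty}$ are only $\B\A\D$, $\D\A\B$ and $\D\A\D$; $\B\A\B$ is excluded because in type $0$ every $\B$ is the middle of an $\A\B\A$ block. All three are factors of $\varphi_0^3(\D)$.

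You also need a short justification that $|\varphi_{\tau[0,L)}(x)|\to\infty$ for $x\in\{\B,\C,\D\}$, uniformly in $\tau$. This holds because $\C$ and $\D$ map to length-3 words containing a letter of $\{\B,\C,\D\}$, and $\B$ maps into $\{\C,\D\}$. Together with $|\varphi_{\tau[0,L)}(\A)|\to\infty$ when $\tau$ has infinitely many $1$s, this gives $|u|\le 2$ for large $L$, except when $S^L\tau=0^\infty$ and $u=x_1\A x_2$. With these two additions your argument goes through.
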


To establish what remains to be proved in this statement, we will show by induction on $n\ge1$ that for all $\tau$ containing infinitely many $0$s and all word $w \in \mathcal{L}(Y_\tau)$ of length $|w| = n$, we have $\nu_\tau([w]) > 0$. For the base case, we have to examine the conditions for the measure $\nu_\tau$ to give positive mass to all single-letter words. 

\begin{lemma}
 \label{lemma:single-letter}
 If $S^2 \tau \neq 1^\infty$, then for all $\S \in \mathcal{A}$ we have $\nu_\tau([\S]) > 0$. 
\end{lemma}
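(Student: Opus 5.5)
The plan is to read the single-letter masses directly off the frequency recursion of Section~\ref{sec:homographies}. For $L=0$ we have $w_k^0=y_{[1,n_k]}$, so Corollary~\ref{cor:symbol_frequency} gives
\[
\nu_\tau([\A])=a^0,\quad \nu_\tau([\B])=b^0,\quad \nu_\tau([\C])=c^0,\quad \nu_\tau([\D])=d^0 .
\]
The proof therefore reduces to showing that these four numbers are positive when $S^2\tau\neq 1^\infty$. The tool is the pair of homographies of Lemmas~\ref{lemma:h0} and~\ref{lemma:h1}, combined with~\eqref{eq:undemi}, which express all four frequencies at level $L$ in terms of those at level $L+1$.

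Write $D:=3-2(a^{L+1}+b^{L+1})$. Because $(a^{L+1},b^{L+1})\in E$, we have $D\in[\tfrac32,3]$. Using $c=\tfrac12-a$ and $d=\tfrac12-b$, a short computation gives the following.
\begin{itemize}
\item If $\tau_L=0$:
\[
a^L=\frac{1-a^{L+1}}{D},\quad b^L=\frac{c^{L+1}}{D},\quad c^L=\frac{d^{L+1}}{D},\quad d^L=\frac{1-b^{L+1}}{D}.
\]
\item If $\tau_L=1$:
\[
a^L=\frac{c^{L+1}}{D},\quad b^L=\frac{1-a^{L+1}}{D},\quad c^L=\frac{1-b^{L+1}}{D},\quad d^L=\frac{d^{L+1}}{D}.
\]
\end{itemize}
Since $a^{L+1},b^{L+1}\le\tfrac12$, the terms $(1-a^{L+1})/D$ and $(1-b^{L+1})/D$ are at least $1/6$. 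So in each case two of the four frequencies are automatically positive. Each of the other two is a positive multiple of $c^{L+1}$ or of $d^{L+1}$.

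The first key step is a lemma: $d^L>0$ if and only if $\tau_{L'}=0$ for some $L'\ge L$, i.e.\ $S^L\tau\neq1^\infty$.
\begin{itemize}
\item For the "if" direction, take the first such $L'$. Then $d^{L'}\ge 1/6$ by the $\tau=0$ formula. Going down from $L'$ to $L$, each step uses $\tau=1$, so $d$ is divided by $D\le 3$ and stays positive.
\item For the converse (not strictly needed here, but it clarifies the threshold), if $S^L\tau=1^\infty$ then $d^L=d^{L+k}/\prod D\le \tfrac12(2/3)^k$ for every $k$, hence $d^L=0$.
\end{itemize}

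The second step is to apply this at level $2$: the hypothesis $S^2\tau\ne1^\infty$ gives $d^2>0$. Next I go up to level $1$.
\begin{itemize}
\item If $\tau_1=0$, then $d^1\ge1/6$ and $c^1=d^2/D>0$.
\item If $\tau_1=1$, then $c^1\ge 1/6$ and $d^1=d^2/D>0$.
\end{itemize}
Either way $c^1,d^1>0$. Finally, at level $0$ the two "free" frequencies are positive, and the other two are positive multiples of $c^1$ or $d^1$ ($b^0$ and $c^0$ if $\tau_0=0$; $a^0$ and $d^0$ if $\tau_0=1$). Hence all of $a^0,b^0,c^0,d^0$ are positive.

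The only delicate point is the algebra that expresses $c^L$ and $d^L$ through the homographies. For instance, when $\tau_L=0$,
\[
c^L=\tfrac12-\frac{1-a^{L+1}}{D}=\frac{\tfrac12-b^{L+1}}{D}=\frac{d^{L+1}}{D}.
\]
This identity, and its analogues, must be checked carefully because the whole propagation argument rests on them. The rest is bookkeeping of which frequency feeds which. I expect no real obstacle beyond making sure the $\tau_L=1$ case has been transcribed correctly from the exchanged lines of~\eqref{eq:system1}.
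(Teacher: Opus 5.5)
Your proof is correct and is essentially the paper's argument: your formulas $c^L=d^{L+1}/D$, $b^L=c^{L+1}/D$ (for $\tau_L=0$) and $d^L=d^{L+1}/D$, $a^L=c^{L+1}/D$ (for $\tau_L=1$), together with the $\ge 1/6$ bounds on the other two frequencies, are the algebraic form of the paper's relations $h_0^{-1}(C)=h_1^{-1}(D)=D$, $h_0^{-1}(B)=h_1^{-1}(A)=C$ and $h_0^{-1}(A)=h_0^{-1}(D)=h_1^{-1}(B)=h_1^{-1}(C)=\emptyset$ for the sides of $K$. Your propagation from $d^2>0$ down to level $0$ is the ``easy exercise'' that the paper leaves to the reader, written out explicitly and only in the direction the lemma actually needs.
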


\begin{proof}
 From the analysis developped in Section~\ref{sec:homographies} (see in particular Corollary~\ref{cor:symbol_frequency}), it is enough to prove that when $S^2\tau \neq 1^\infty$, the point $\bigl(a(\tau), b(\tau)\bigr)$ lies in the interior of the square $K = [0, 1/2] \times [0, 1/2]$.
 Let us name the four sides of the square $K$ 
 \begin{align*}
  A &:= \{0\} \times [0, 1/2], \\
  B &:= [0, 1/2] \times \{0\}, \\
  C &:= \{1/2\} \times [0, 1/2], \\
  D &:= [0, 1/2] \times \{1/2\}. 
 \end{align*}  
 
 \begin{figure}[ht]
 \centering
 \includegraphics[width = 14cm]{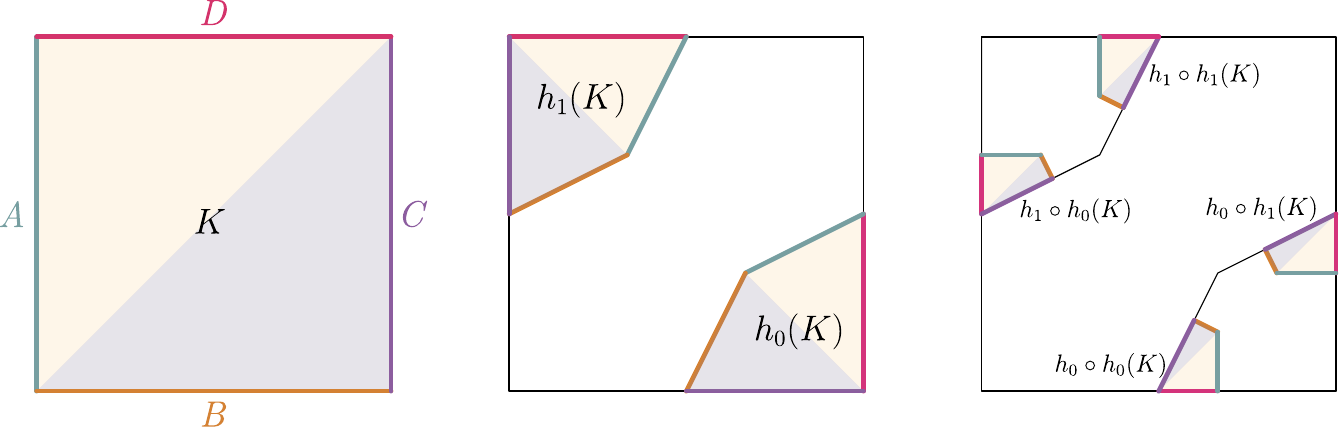}
 \caption{First iterations of the homographies $h_0$ and $h_1$ on the square $K$.}
 \label{fig:iteration}
 \end{figure}
 
 It is straightforward to check that, as indicated on Figure~\ref{fig:iteration}, we have
 \[
    h_0^{-1}(A) = h_0^{-1}(D) = h_1^{-1}(B) = h_1^{-1}(C) = \emptyset,
 \]
and that
\[
  h_0^{-1}(C) = h_1^{-1} (D) = D, \qquad h_0^{-1}(B) = h_1^{-1}(A) = C.
\]

 \begin{figure}[h]
 \centering
 \includegraphics[width = 25mm]{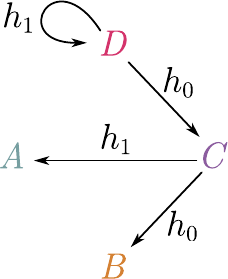}
 \caption{Action of $h_0$ and $h_1$ on the sides of the square}
 \label{fig:graph}
 \end{figure}
 
This action of $h_0$ and $h_1$ on the sides of the square is summarized in the oriented graph on Figure~\ref{fig:graph}. Observe also that for all $\tau\in\{0,1\}^\NN$ we have
\[
   \bigl(a(\tau), b(\tau)\bigr) = h_{\tau_0} \bigl(a(S\tau), b(S\tau)\bigr).
\]
Then an easy exercise which is left to the reader successively gives the following facts:
\begin{enumerate}
 \item $\bigl(a(\tau), b(\tau)\bigr) \in D$ if and only if $\tau = 1^\infty$, which is the only case where $\nu_\tau([\D]) = 0$. 
 \item $\bigl(a(\tau), b(\tau)\bigr) \in C$ if and only if $\tau = 01^\infty$, which is the only case where $\nu_\tau([\C]) = 0$. 
 \item $\bigl(a(\tau), b(\tau)\bigr) \in A$ if and only if $\tau = 101^\infty$, which is the only case where $\nu_\tau([\A]) = 0$. 
 \item $\bigl(a(\tau), b(\tau)\bigr) \in B$ if and only if $\tau = 001^\infty$, which is the only case where $\nu_\tau([\B]) = 0$. 
\end{enumerate}

 This is enough to conclude the proof. 
\end{proof}

Then, for the induction step in the proof of Theorem~\ref{thm:minimal}, we use the following lemma.

\begin{lemma}
 \label{lemma:induction_step}
 For any $\tau\in\{0, 1\}^\NN$ and any $w\in\mathcal{L}(Y_\tau)$ of length $|w|\ge2$, there exists $L\ge1$ and $w'\in\mathcal{L}(Y_{S^L\tau})$ such that
 \begin{itemize}
  \item $|w'| < |w|$,
  \item $w$ is a factor of $\varphi_{\tau_0} \circ \varphi_{\tau_1} \circ \cdots \circ \varphi_{\tau_{L-1}}(w')$.
 \end{itemize}
\end{lemma}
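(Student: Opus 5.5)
Take $y\in Y_\tau$ containing $w$ at positions $[p,p+|w|-1]$. The plan is to desubstitute $w$ one level at a time and stop once the preimage becomes strictly shorter. Two earlier facts are the tools. Proposition~\ref{Prop prim} gives $y = S^{i}\varphi_{\tau_0}(\Delta y)$ with $i\in\{0,1,2\}$. Iterating it gives $y = S^{i_L}\varphi_{\tau_0}\circ\cdots\circ\varphi_{\tau_{L-1}}(\Delta^L y)$, with $\Delta^L y\in Y_{S^L\tau}$. So every factor of $y$ is covered by the image of a factor of $\Delta^L y$.

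\textbf{Step 1 (minimal covering preimage).} For each $L\ge1$, let $w'_L$ be the shortest factor of $\Delta^L y$ whose image under $\varphi_{\tau[0,L)}$, placed according to the decomposition of $y$ into blocks $\S_L$, covers the occurrence of $w$. This $w'_L$ is exactly the set of letters of $\Delta^L y$ whose blocks $\S_L$ meet $[p,p+|w|-1]$. It lies in $\mathcal{L}(Y_{S^L\tau})$, and $w$ is a factor of $\varphi_{\tau[0,L)}(w'_L)$. It remains to find an $L$ with $|w'_L|<|w|$.

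\textbf{Step 2 (lengths are non-increasing).} The blocks at level $L+1$ are unions of blocks at level $L$: each letter of $\Delta^{L+1}y$ is the image under $\Delta$ of an elementary block of $\Delta^L y$. Hence $w'_{L+1}$ is obtained from $w'_L$ by grouping its letters into (parts of) elementary blocks, so $|w'_{L+1}|\le|w'_L|$. At $L=1$ we also have $|w'_1|\le|w|$.

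\textbf{Step 3 (strict decrease eventually).} Equality $|w'_{L+1}|=|w'_L|$ holds only if every letter of $w'_L$ is its own elementary block in $\Delta^L y$. In that case no two adjacent letters of $w'_L$ form part of a $3$-letter block $\A\B\A,\B\A\B,\C\D\C,\D\C\D$. So it suffices to show that the block lengths $|\S_L|$ tend to infinity for every letter $\S$. Then, once $|\S_L|\ge|w|$ for all $\S$, the window of length $|w|$ meets at most two blocks, giving $|w'_L|\le 2<|w|$ when $|w|\ge3$.

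For the growth of $|\S_L|$, note that $\varphi_0^2$ and $\varphi_1^2$ send each letter to a word of length $\ge 3$ except in a few cases. For instance, $\varphi_1$ maps $\A\mapsto\B\mapsto\C$, and $\varphi_0$ maps $\A\mapsto\A$. The lengths are nondecreasing in $L$, and by Proposition~\ref{prop:BLY} the average block length $n_k/\Sigma^L_y(n_k)\to\infty$. I expect this step to be the main obstacle, because letters can stay short for a long time: under $\tau=0^\infty$, $\A_L=\A$ for all $L$. The fix is to use adjacency instead of individual block lengths. A letter $\A$ of $\Delta^L y$ is never isolated in a way that persists: its neighbours are in $\{\B,\D\}$, and it belongs, together with a neighbour, to a $3$-block at some higher level. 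The plan is therefore to show that any two adjacent letters of $\Delta^L y$ are eventually merged into a single letter at some higher level $L'$. This comes from Lemma~\ref{lemma:Sigma_min}: positions that are not well-aligned at all levels are merged, since at most one boundary between consecutive positions survives indefinitely up to the origin issue. Applying this to the $|w|-1$ internal boundaries of the occurrence of $w$, all but at most one eventually disappear. That yields $|w'_L|\le 2$.

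\textbf{Step 4 (the case $|w|=2$).} Here $w'_L$ must have length $1$, so the single internal boundary of $w$ must eventually disappear. This can fail when the boundary is infinitely well-aligned, that is, when $y$ is a shift of a sequence in $\bigcap_L B^L_Y$. To handle this, replace the occurrence of $w$ with another occurrence. By Lemma~\ref{lemma:Sigma_min}, $w$ occurs elsewhere in $y$ or in another element of $Y_\tau$: the boundaries that persist through all levels form a single column. Since $w\in\mathcal{L}(Y_\tau)$, choose a $y\in Y_\tau$ in which $w$ appears away from that column, for example by taking an occurrence inside some block $\S_{L_0}$ and then transporting it. The details of choosing such an occurrence would be checked last.
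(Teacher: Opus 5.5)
\textbf{There is a genuine gap, in Steps 3 and 4.}

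\textbf{Step 3.} You claim that at most one internal boundary of an occurrence of $w$ survives at every level, and you derive this from Lemma~\ref{lemma:Sigma_min}. That lemma points the wrong way. It says every window of length $3^L$ contains at least one level-$L$ boundary, which is a lower bound on boundaries. It cannot limit how many boundaries survive forever, and neither can the density statement of Proposition~\ref{prop:BLY}.

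The claim is also false. Take $\tau=0^\infty$ and let $y$ be the bi-infinite fixed point of $\varphi_0$ obtained by iterating on $\D\underline{\A}\D$, so that $\D\underline{\A}\D\mapsto\D\C\D\,\underline{\A}\,\D\C\D\mapsto\cdots$. Then:
\begin{itemize}
\item $y$ is alternating, so $y\in Y_{0^\infty}$ by Proposition~\ref{prop:elements_of_Ytau}.
\item $\Delta y=y$ and $\Delta(Sy)=Sy$, so $y$ and $Sy$ both lie in $\bigcap_L B_Y^L$.
\item Since $\varphi_0^L(\A)=\A$, the level-$L$ blocks around the origin are $\varphi_0^L(\D)$, $\A$, $\varphi_0^L(\D)$.
\end{itemize}
Hence for the occurrence of $w=\D\A\D$ at $[-1,1]$ you get $w'_L=\D\A\D$ for every $L$, and $|w'_L|$ never drops below $|w|=3$.

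The correct bound is two persistent boundaries. Between two consecutive persistent boundaries, the segment is eventually a single letter that forms its own elementary block at each level. It therefore becomes $\A$ after two derivations, and two such adjacent segments would produce the forbidden factor $\A\A$. This repairs your argument only for $|w|\ge 4$.

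\textbf{Step 4.} For $|w|\in\{2,3\}$ everything depends on finding some occurrence of $w$, in some element of $Y_\tau$, at which an internal boundary dies. Step 4 does not supply one. Taking ``an occurrence inside some block $\S_{L_0}$'' is exactly the conclusion of the lemma with $|w'|=1$, so the argument is circular. The assertion that Lemma~\ref{lemma:Sigma_min} yields another occurrence is unsupported; a word of the language may occur only once in a given sequence.

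\textbf{What the paper does instead.} This is the real content of the lemma, and the paper settles it by an explicit finite check.
\begin{itemize}
\item For $|w|=2$: depending on $\tau_0$, only six words of length $2$ can lie in $\mathcal{L}(Y_\tau)$. All of them are factors of $\varphi_{\tau_0}\circ\varphi_{\tau_1}(\D)$, and $\D\in\mathcal{L}(Y_t)$ for every $t$.
\item For $|w|\ge 3$: a contradiction argument over two levels shows that the minimal covering word $w'$ has length $|w|$, with all interior letters equal to $\A$, since $\A$ is the only letter with a one-letter image under $\varphi_{\tau_0}\circ\varphi_{\tau_1}$. This forces $|w|=3$, and propagating the argument forces $\tau_L=0$ for all $L\ge 2$. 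The three remaining candidates $\B\A\D$, $\D\A\B$, $\D\A\D$ occur in $\varphi_0^2(\D)$ or $\varphi_0^3(\D)$.
\end{itemize}
Your proposal needs some such explicit verification for short words.
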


\begin{proof}
 We first prove the lemma for words $w$ of length 2. Remember that the symbols $\A$, $\B$, $\C$ and $\D$ in $Y$ correspond respectively to the blocks $1$, $3$, $111$ and $333$ in $X$. Since blocks of $1$s and blocks of $3$s alternate in $X$, the only words of length~2 that may appear in $\mathcal{L}(Y)$ are 
 \[ \A\B,\ \B\A,\ \B\C,\ \C\B,\ \C\D,\ \D\C,\ \D\A,\ \A\D. \]
 Moreover, after applying $\varphi_0$, the symbol $\B$ always comes surrounded with $\A$s. Therefore, if $\tau_0 = 0$, the words of length 2 that may appear in $\mathcal{L}(Y_\tau)$ are reduced to  
 \[ \A\B,\ \B\A,\ \C\D,\ \D\C,\ \D\A,\ \A\D. \]
 But then, we observe that all these words appear both as factors of 
 \[ \varphi_0 \circ \varphi_0 (\D) = \D\C\D\A\B\A\D\C\D \]
 and of 
 \[ \varphi_0 \circ \varphi_1 (\D) = \A\B\A\D\C\D\A\B\A. \]
 Similarly, after applying $\varphi_1$, the symbol $\A$ always comes surrounded with $\B$s. Therefore, if $\tau_0 = 1$, the words of length 2 that may appear in $\mathcal{L}(Y_\tau)$ are reduced to  
 \[ \A\B,\ \B\A,\ \B\C,\ \C\B,\ \C\D,\ \D\C. \]
 But all these words appear both as factors of 
 \[ \varphi_1 \circ \varphi_0 (\D) = \C\D\C\B\A\B\C\D\C \]
 and of 
 \[ \varphi_1 \circ \varphi_1 (\D) = \B\A\B\C\D\C\B\A\B. \]
This settles the case of words $w$ of length 2.
 
 Now, assume by contradiction that for some $\tau\in\{0,1\}^\NN$ there exists $w\in\mathcal{L}(Y_\tau)$ with $|w| \ge 3$ and for which the conclusion of the lemma does not hold. 
 Let $y \in Y_\tau$ be such that $w$ occurs as a factor of $y$. 
 Since $y = S^j \varphi_{\tau_0} \circ \varphi_{\tau_1} (\Delta^2 y)$ for some $j$, we can consider a minimal factor $w'$ of $\Delta^2 y$ such that $w$ appears as a factor of $\varphi_{\tau_0} \circ \varphi_{\tau_1}(w')$.
 \begin{figure}[h]
 \centering
 \includegraphics[width = 80mm]{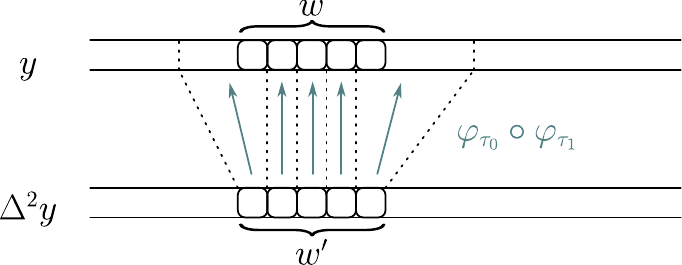}
 \caption{Getting $w$ by substitution from $w'$ with $|w'| \ge |w|$}
 \label{fig:substitution}
 \end{figure}
 By assumption we have $|w'| \ge |w|$, but by minimality of $w'$ the only possibility is that $|w'| = |w|$ (every letter in $w'$ corresponds via $\varphi_{\tau_0}$ to at least one letter in $w$). Let us write $w = w_1\cdots w_k$ and $w' = w'_1\cdots w'_k$, where $k := |w| \ge 3$. Then for all $2 \le i \le k - 1$, $\varphi_{\tau_0} \circ \varphi_{\tau_1} (w'_i)$ has to be reduced to the single letter $w_i$. But, as it is easy to check, there are only 3 ways that for $\S \in \mathcal{A}$, $|\varphi_{\tau_0} \circ \varphi_{\tau_1} (\S)| = 1$:
 \begin{align*}
  \varphi_0 \circ \varphi_0 (\A) &= \A, \\
  \varphi_0 \circ \varphi_1 (\A) &= \D, \\
  \varphi_1 \circ \varphi_1 (\A) &= \C.  
 \end{align*}
 In particular, for all $2 \le i \le k - 1$ we must have $w'_i = \A$. But $\A\A$ never appears in $Y$, therefore $k = 3$. 
 
 Moreover, we observe that $w'$ must enjoy the same property as $w$ (that is: the conclusion of the lemma also fails for $w'$). The same argument as above applies also to $w'$, but since $w'_2 = \A$, only the first of the three cases is possible here. A straightforward induction then proves that $\tau_L = 0$ for all $L \ge 2$, hence $w' \in \mathcal{L}(Y_{0^\infty})$. Yet, it is easy to check that the only words of length 3 with a central $\A$ in $\mathcal{L}(Y_{0^\infty})$ are 
 $\B\A\D$, $\D\A\B$, and $\D\A\D$. But the first two appear in 
 $\varphi_0^2 (\D) = \D\C\D\A\B\A\D\C\D$, and the last one occurs in $\varphi_0^3 (\D) = \D\C\D\A\B\A\D\C\D\A\D\A\D\C\D\A\B\A\D\C\D$. This contradicts the assumption that the lemma fails for $w'$.  
\end{proof}

\begin{proof}[End of the proof of Theorem~\ref{thm:minimal}]
It remains to complete the induction step. Assume that for some $n\ge2$, for all $\tau$ containing infinitely many $0$s, and for all $w \in \mathcal{L}(Y_\tau)$ with $|w| \le n$, we have $\nu_\tau([w]) > 0$. 

We fix $\tau\in\{0,1\}^\NN$ with infinitely many $0$s, and we consider $w \in \mathcal{L}(Y_\tau)$ of length $|w| = n+1$. Applying Lemma~\ref{lemma:induction_step} to $w$, we obtain $L \ge 1$ and $w'\in \mathcal{L}(Y_{S^L\tau})$ such that
\begin{itemize}
  \item $|w'| < |w|$,
  \item $w$ is a factor of $\varphi_{\tau_0} \circ \cdots \circ \varphi_{\tau_{L-1}}(w')$. 
\end{itemize}
We can apply the induction hypothesis to $w'$, which gives $\nu_{S^L\tau}([w']) > 0$. In particular, for any $y'\in Y_{S^L \tau}$, the factor $w'$ appears with positive density in $y'$. Now, let $y$ be any sequence in $Y_\tau$, and set $y' := \Delta^L y \in Y_{S^L \tau}$. We then have $y = S^j \varphi_{\tau_0} \circ \cdots \circ \varphi_{\tau_{L-1}} (y')$ for some $j$, hence $w$ appears with positive density in $y$, and this is enough to conclude that $\nu_\tau([w]) > 0$.
\end{proof}

\section{Perspectives}

    \subsection{Study of the set of possible frequencies} 

    Given a sequence type $\tau$, we recall the notation $a = a(\tau) := \nu_\tau([\A])$, and likewise for $\B,\C$ and $\D$. By the unique ergodicity of $(X_\tau,S)$, all the sequences of $X_{\tau}$ have the same frequency of $1$s. We denote by $f_{\tau} :=\mu_\tau([1])$ the value of this frequency. It follows from the definition of the mono-symbol blocks and from Lemma~\ref{lemma:P7} that $f_{\tau}$ can be computed from the frequencies of letters in $Y_{\tau}$, using the following formula:
    \begin{equation}
        \label{eq : freq1}
        f_{\tau} = \frac{a+3c}{a+b+3(c+d)} = \frac{\frac{3}{2}-2a}{3-2(a+b)}.
    \end{equation}
    Note that any such $a$ and $b$ come from a sequence of iterations of the two homographies $h_0$ and $h_1$, see Corollary~\ref{cor:def_a_b}. 
    Precisely, let $\mathcal{F} := \left\{(a(\tau),b(\tau)) \mid \tau \in \{0,1\}^\mathbb{N}\right\}$. The set $\mathcal{F}$ has a self-similar structure, that can be approximated by computing the images of the square $K=[0,\frac{1}{2}]\times [0,\frac{1}{2}]$ by finite sequences of iterations of the two homographies $h_0$ and $h_1$, as shown on Figure~\ref{fig:f-approx}. 
    Now, let $F := \left\{f_{\tau} \mid \tau \in \{0,1\}^\mathbb{N}\right\}$ be the set of possible values for the frequency of $1$s in smooth sequences.  
    Figure~\ref{fig:fractale-freq} illustrates the successive approximations of $F$ derived from the approximations of $\mathcal F$ of Figure~\ref{fig:f-approx}. Note that $F$ is symmetrical with respect to $1/2$, since swapping the $1$s and the $3$s in a sequence results in moving from a frequency $f$ to a frequency $1-f$.
    
    \begin{figure}[!h]%
        \centering
        \subfloat[\centering Rank $1$]{{\includegraphics[width=.3\textwidth]{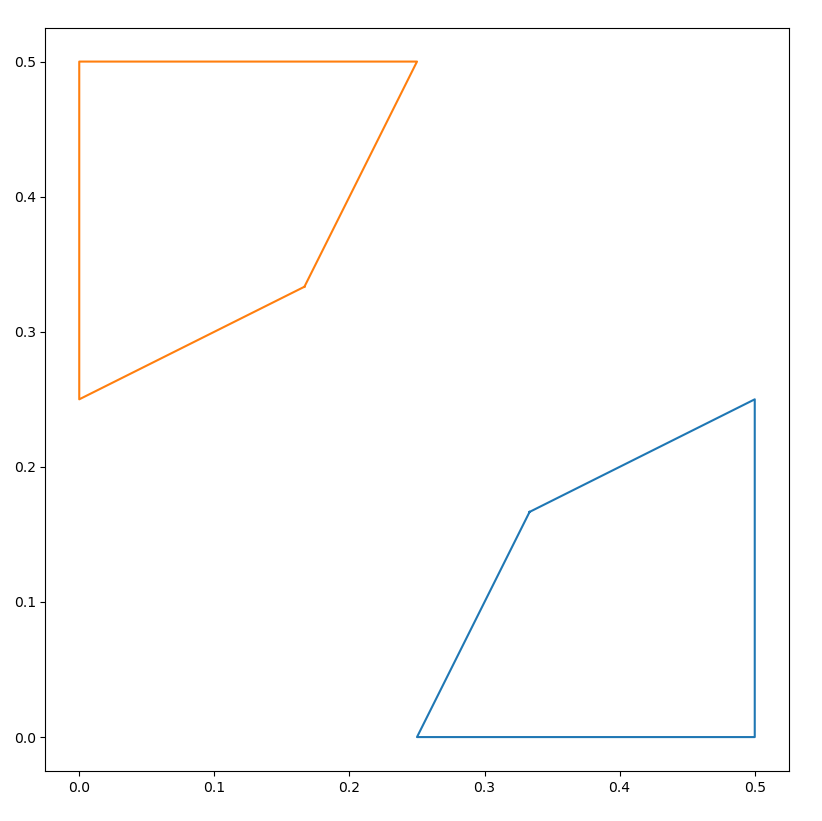}}}%
        \subfloat[Rank $2$]{{\includegraphics[width=.3\textwidth]{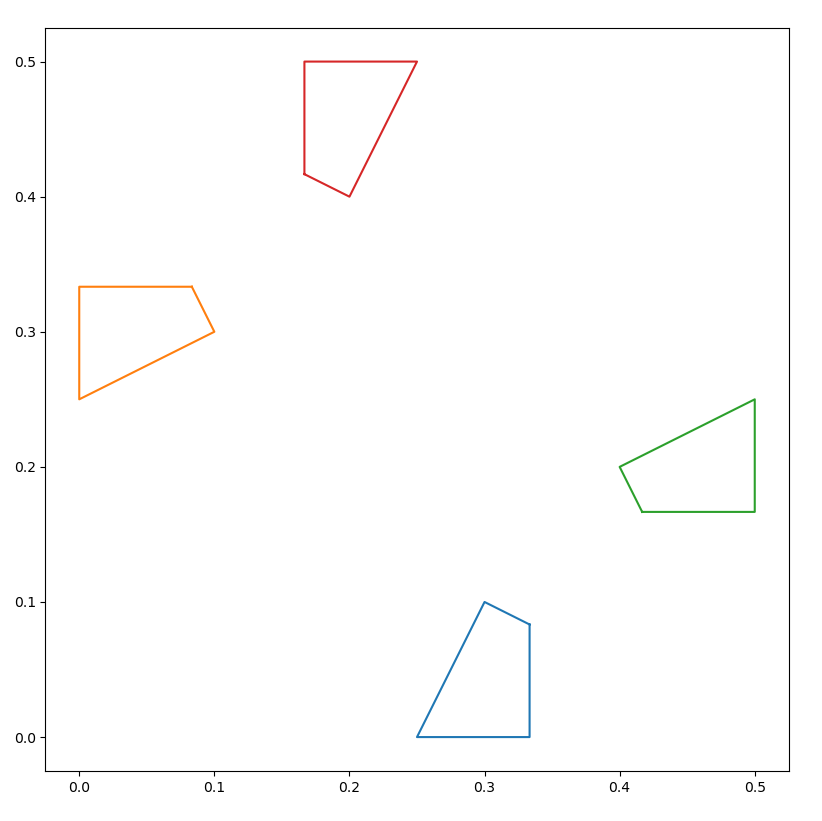}}}
        \subfloat[Rank $3$]{{\includegraphics[width=.3\textwidth]{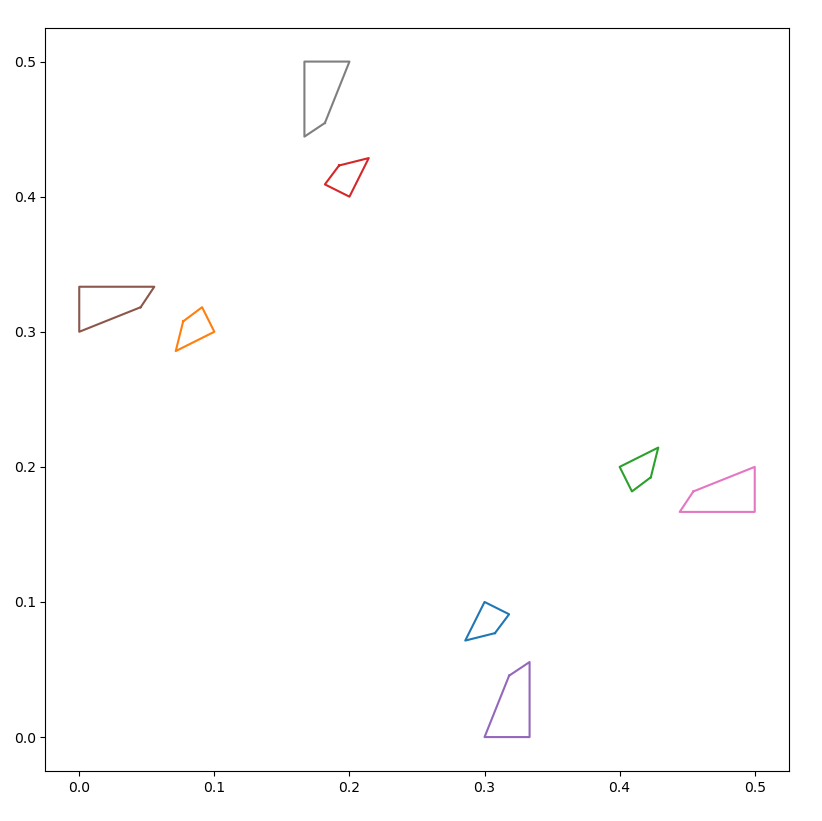}}}
        \\
        \subfloat[\centering Rank $4$]{{\includegraphics[width=.3\textwidth]{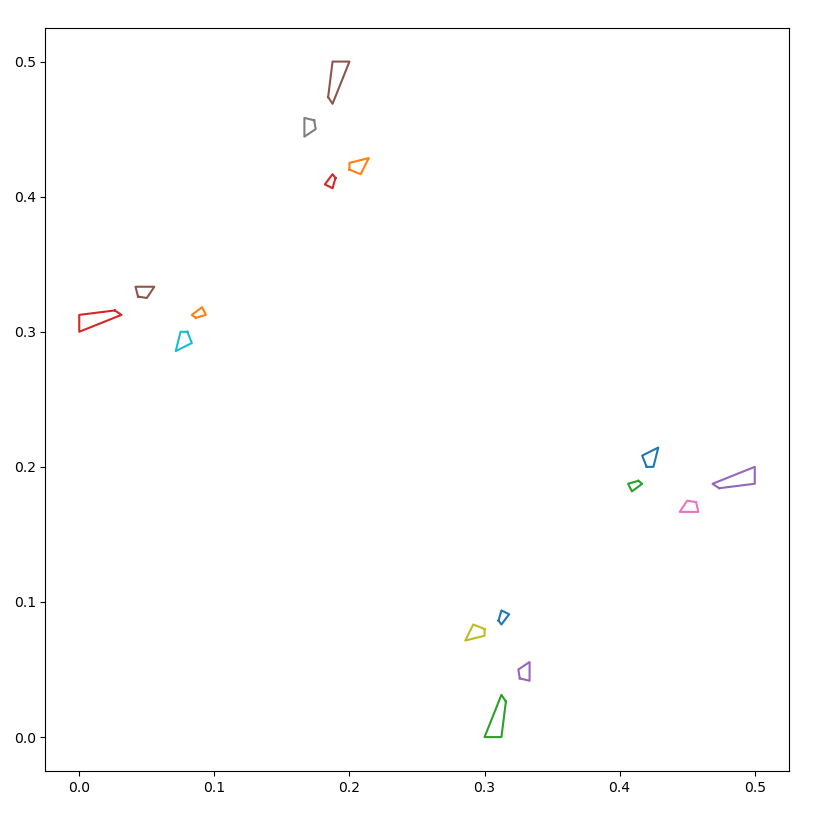}}}%
        \subfloat[Rank $6$]{{\includegraphics[width=.3\textwidth]{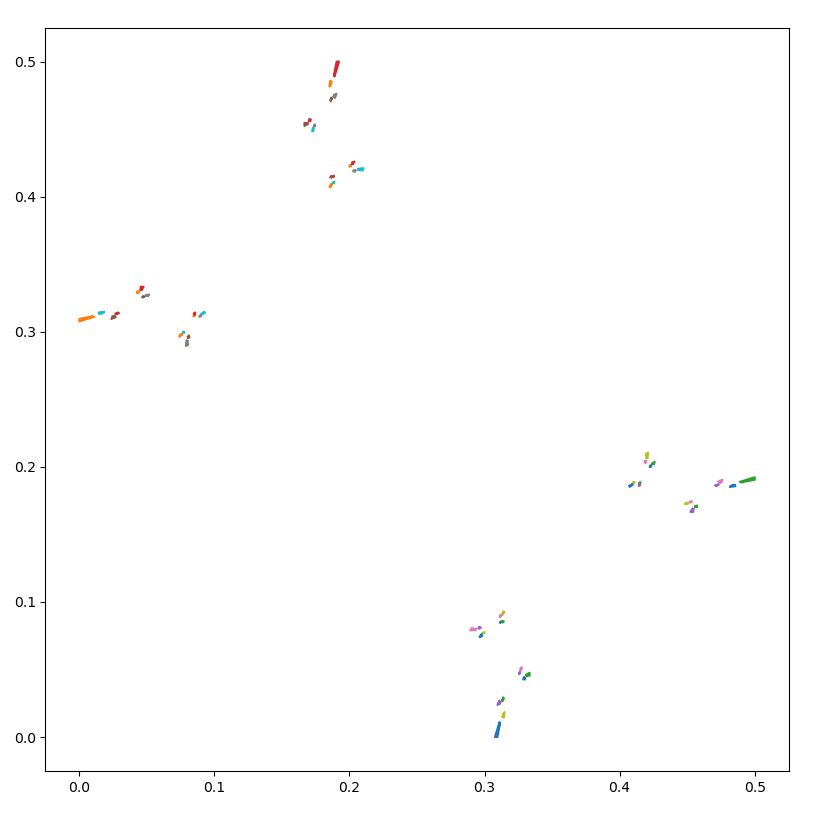}}}
        \subfloat[Rank $8$]{{\includegraphics[width=.3\textwidth]{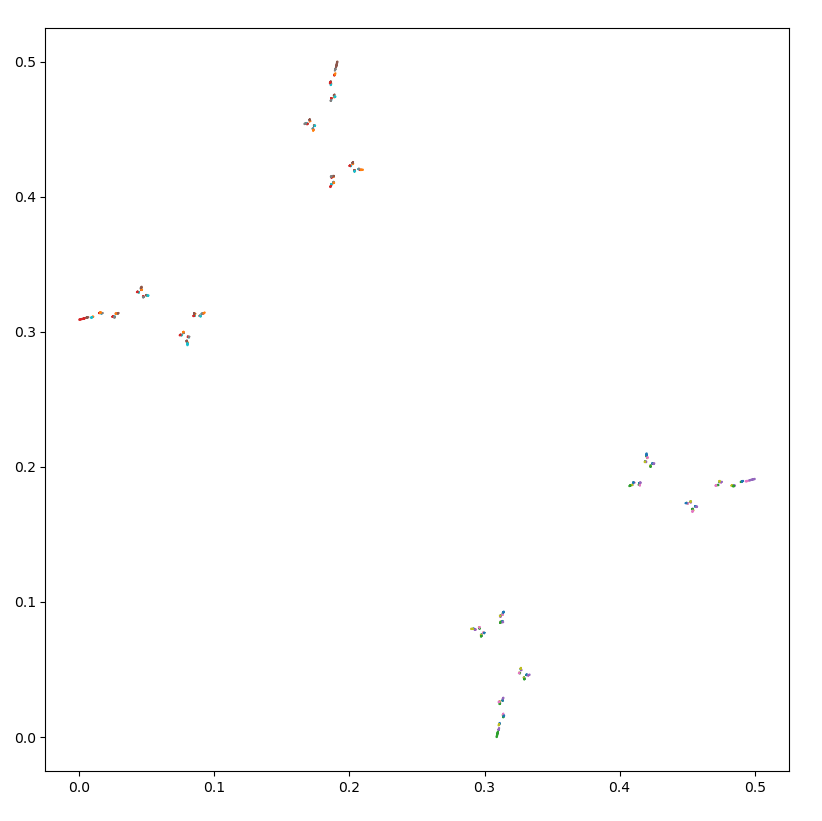}}}
        \caption{Successive approximations of $\mathcal{F}$ obtained by computing the images of the square $K$ by sequences of iterations of the two homographies $h_0$ and $h_1$.}\label{fig:f-approx}
    \end{figure}
    
    \begin{figure}[h!]
        \centering
        \includegraphics[width = \textwidth]{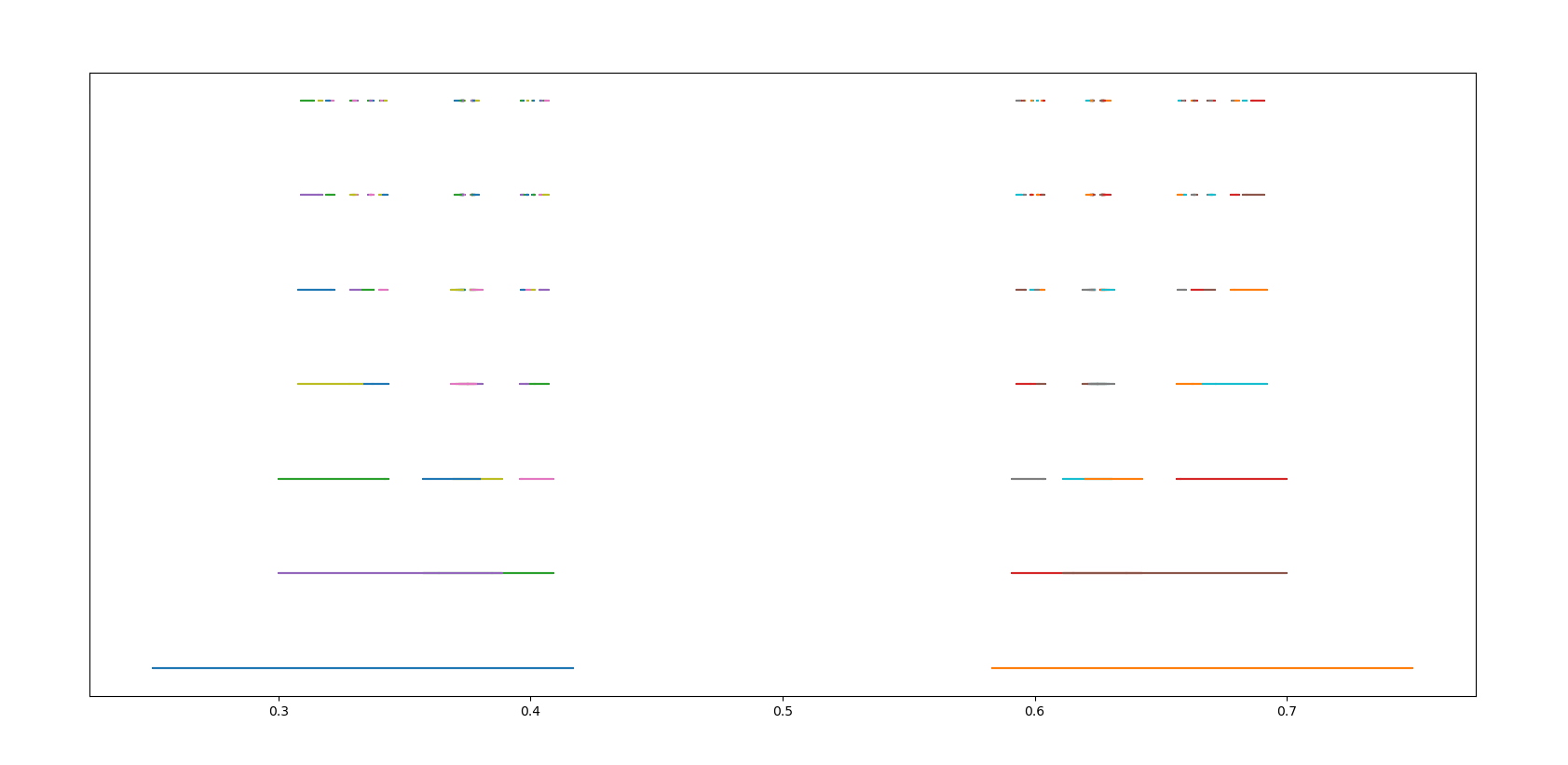}
        \caption{Successive approximations of $F$ (from bottom up) derived from the approximations of $\mathcal F$ using \eqref{eq : freq1}, up to rank $7$.} 
        \label{fig:fractale-freq}
    \end{figure}

    When the sequence $\tau$ is ultimately periodic, the value of $f_{\tau}$ has an explicit description. In particular, for $\tau_1=0001^\infty$, we obtain that $f_{\tau_1}=\frac{1}{2} - \frac{1}{13-\sqrt{5}}$, and for $\tau_2=1^\infty$, $f_{\tau_2}=\frac{1}{2} + \frac{2}{3+\sqrt{5}}$. 
    We conjecture that these sequences achieve the maxima of $F$ on the intervals $[0,1/2]$ and $[0,1]$.
    
    \begin{problem} Examine in further detail the fractal structure of the set $F$ of possible values for the frequency of $1$'s in smooth sequences. Is it true that 
    $\sup F\cap [0,1/2] =\frac{1}{2} - \frac{1}{13-\sqrt{5}}$,
    and 
    $\sup F= \frac{1}{2} + \frac{2}{3+\sqrt{5}}$ ?
    \end{problem}

    \subsection{Generalisation to other alphabets} 
    The bi-infinite smooth sequence and recoding formalisms can be extended to any two-integers alphabet $\{\alpha,\beta\}$ with $\A = \alpha^\alpha, \B = \beta^\alpha, \C = \alpha^\beta$ and $\D = \beta^\beta$. 
    When both $\alpha$ and $\beta$ are odd integers, the properties of Section~\ref{sec:objects_tools} still hold, with the two substitutions becoming:
\[
     \varphi_0:\begin{cases}\A & \mapsto\ \A(\B\A)^p \\
                            \B & \mapsto\ \D(\C\D)^p \\
                            \C & \mapsto\ \A(\B\A)^q \\
                            \D & \mapsto\ \D(\C\D)^q 
               \end{cases}     
      \qquad      
     \varphi_1:\begin{cases}\A & \mapsto\ \B(\A\B)^p \\
                            \B & \mapsto\ \C(\D\C)^p \\
                            \C & \mapsto\ \B(\A\B)^q \\
                            \D & \mapsto\ \C(\D\C)^q 
               \end{cases}.     
    \] 
    with $p = \frac{\alpha-1}{2}$ and $q = \frac{\beta-1}{2}$. 
    The two corresponding homographies are given by
    \[
          h_0(a, b) := \frac{1}{\beta - (\beta-\alpha)(a + b)} \left( \frac{\beta+1}{4} - a\cdot\frac{\beta-\alpha}{2}, \frac{\beta-1}{4} - a\cdot\frac{\beta-\alpha}{2}\right),
         \]
        \[
          h_1(a, b) := \frac{1}{\beta - (\beta-\alpha)(a + b)} \left(\frac{\beta+1}{4} - a\cdot\frac{\beta-\alpha}{2}, \frac{\beta-1}{4} - a\cdot\frac{\beta-\alpha}{2}\right).
         \]

    However, if we want to use the same arguments as in Section~\ref{sec:unique_ergo} to prove the unique ergodicity, we need to ensure that these homographies are contracting on a suitable domain. But this does not seem to be straightforward,  depending on the values of $\alpha$ and $\beta$. Indeed, finite approximations of the analog of the limit set $\mathcal{F}$ suggest that,  for alphabets of the form $\{1,\beta\}$ with $\beta \geq 5$, the contraction property of the homographies might no longer be valid (see Figures~\ref{fig:approx-1,9} and~\ref{fig:approx-1,17}).

    \begin{figure}[!ht]
        \centering
        \subfloat[\centering $\{\alpha,\beta\} = \{1,9\}$\label{fig:approx-1,9}]{{\includegraphics[width=.33\textwidth]{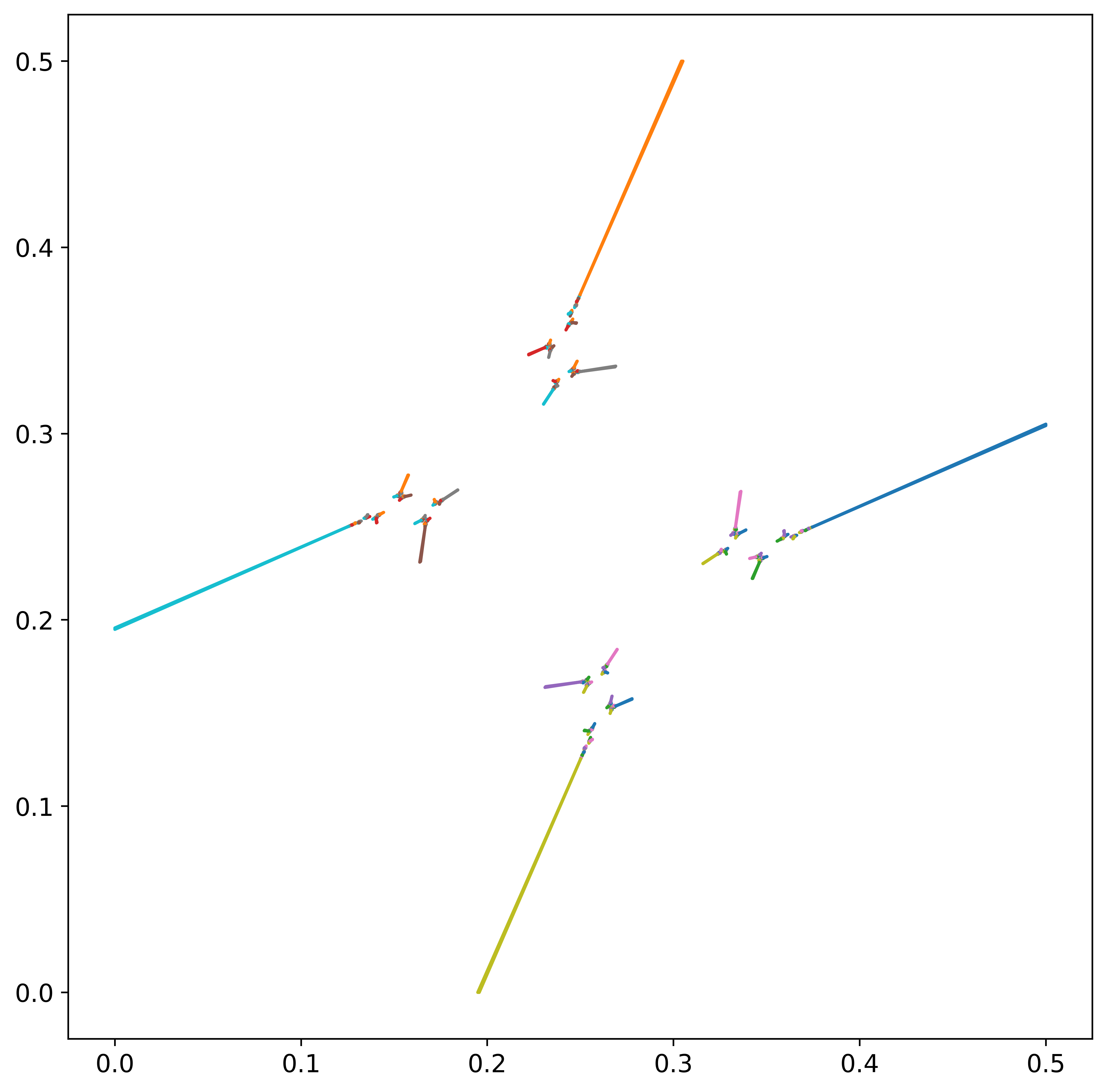}}}%
        \quad
        \subfloat[$\{\alpha,\beta\} = \{1,17\}$\label{fig:approx-1,17}]{{\includegraphics[width=.33\textwidth]{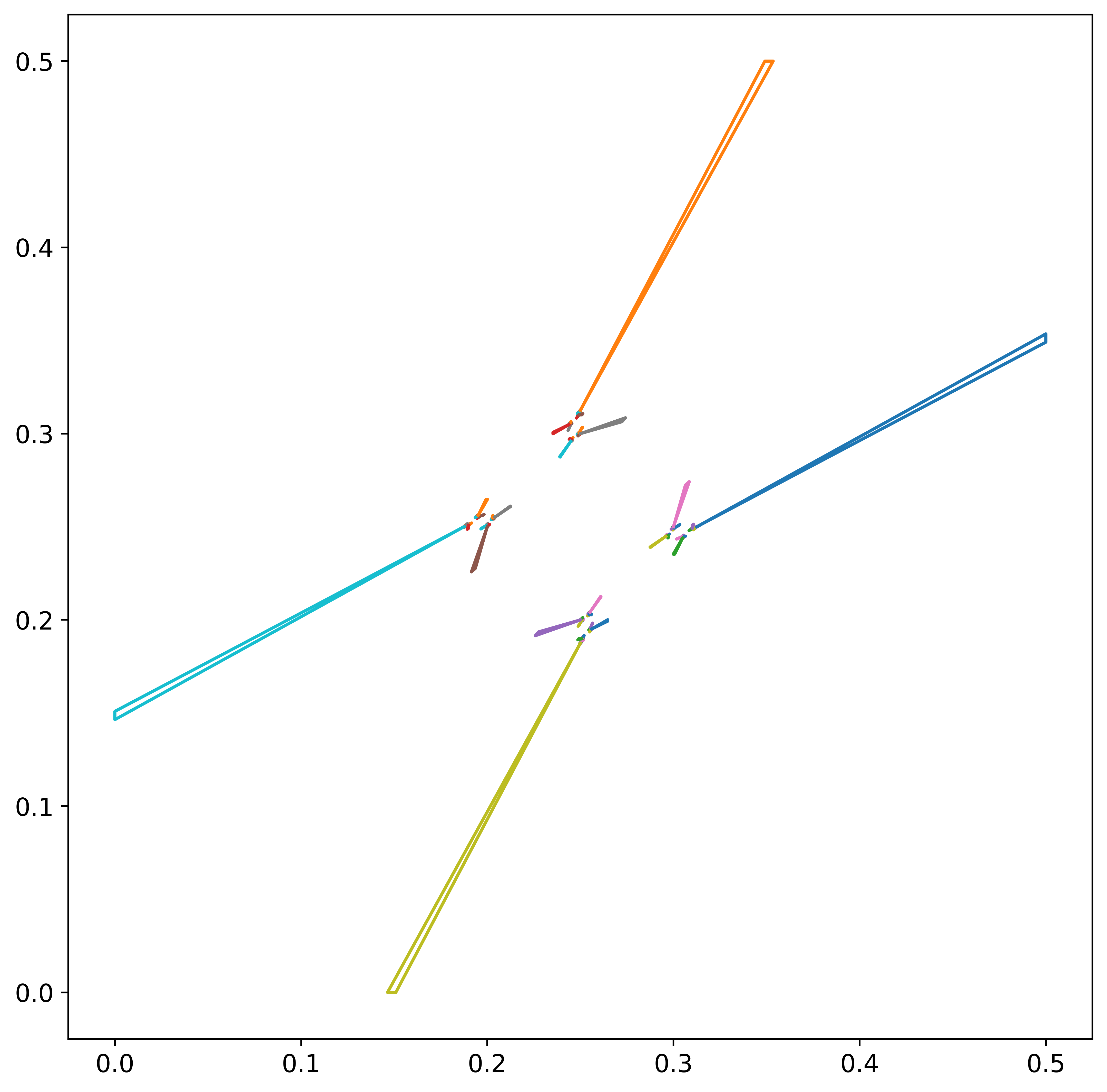}}}
        \caption{Images of the square $K$ by sequences of length $13$ of iterations of the two homographies $h_0$ and $h_1$.\label{fig:approx-other-alphabet-non-contracting}}
    \end{figure}
    
    On the other hand, when $\min\{\alpha,\beta\} \ge 3$, the homographies seem to contract event faster than for $\{1,3\}$. See for example Figures \ref{fig:approx-3,7} and \ref{fig:approx-9,11} where we had to left the traces of the first iterations since the contraction was happening really fast.

    \begin{figure}[!ht]%
        \centering
        \subfloat[\centering $\{\alpha,\beta\} = \{3,7\}$\label{fig:approx-3,7}]{{\includegraphics[width=.33\textwidth]{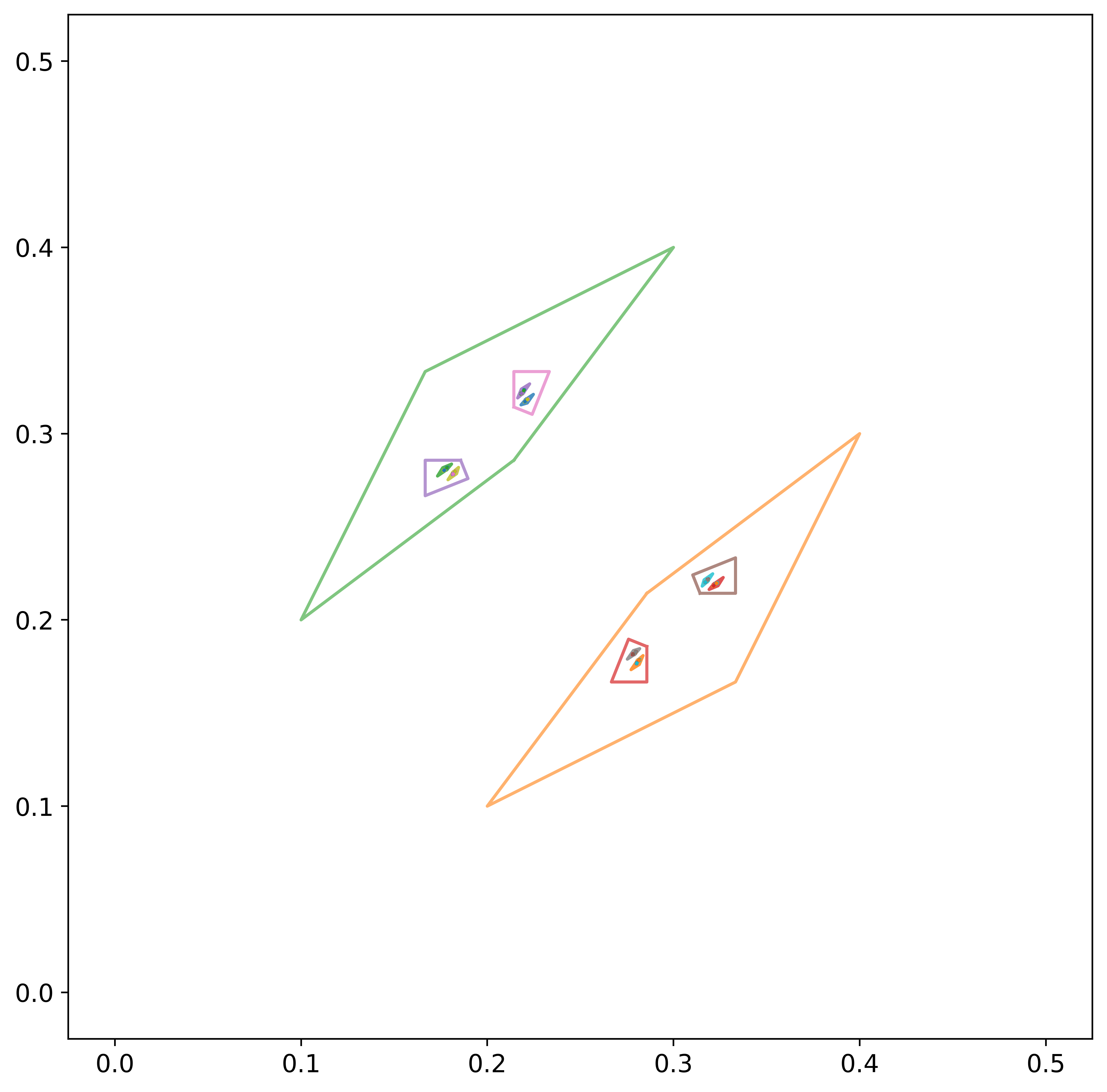}}}%
        \quad
        \subfloat[$\{\alpha,\beta\} = \{9,11\}$\label{fig:approx-9,11}]{{\includegraphics[width=.33\textwidth]{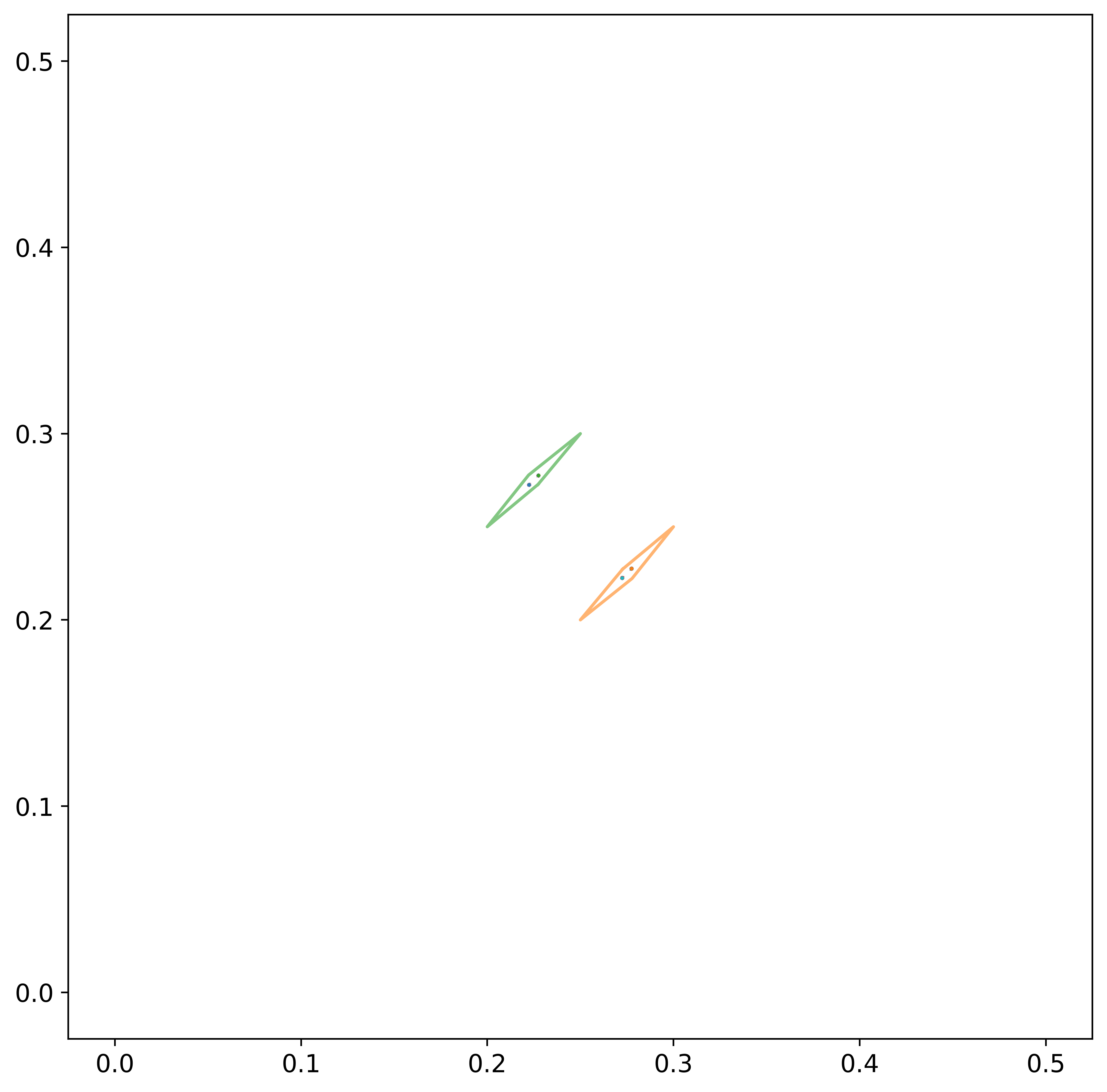}}}
        \caption{Images of the square $K$ by sequences of length up to $6$ of iterations of the two homographies $h_0$ and $h_1$.\label{fig:approx-other-alphabet-contracting}}
    \end{figure}
    
    \begin{problem} For which values of $\alpha$ and $\beta$ can we use a contracting property of the homographies $h_0$ and $h_1$ to prove the unicity of $a(\tau)$ and $b(\tau)$? Are the subshifts $(X_{\tau})_{\tau\in\{0,1\}^{\mathbb N}}$ of smooth bi-infinite sequences on the alphabet $\{\alpha, \beta\}$ uniquely ergodic for \emph{any} choice of odd integers $\alpha$ and $\beta$?
    \end{problem}

\subsection{Measure-preserving systems}

The analysis conducted in the paper provides a continuum of measure-preserving systems: $(X_\tau,\mu_\tau,S)_{\tau \in \{0,1\}^\NN}$. 
All these systems are ergodic, and have zero Kolmogorov-Sinai entropy: this latter fact follows from the variational principle, since the subshift $X$ of smooth sequences over $\{1,3\}$ has zero topological entropy (it has polynomial complexity, for details see the recent study on this topic~\cite{cassaigne2026}). But apart from this, it seems that none of their other measure-theoretic properties are obvious. 

\begin{problem} May the measure-preserving systems $(X_\tau,\mu_\tau,S)_{\tau \in \{0,1\}^\NN}$ have nontrivial measurable eigenfunctions? Can they be strongly mixing? Can they be of finite rank? Loosely Bernoulli? Can we describe the joinings between $(X_{\tau},\mu_{\tau},S)$ and $(X_{\tau'},\mu_{\tau'},S)$ for $\tau,\tau' \in\{0,1\}^{\mathbb N}$?
\end{problem}

\bibliography{smooth}

\end{document}